\newcommand{\R}{{\mathbb R}}
\newcommand{\N}{{\mathbb N}}
\newcommand*{\dd}{\mathrm{d}}
\def\R{\mathbb{R}}
\def\N{\mathbb{N}}
\def\cE{\mathcal{E}}
\def\cG{\mathcal{G}}
\def\cI{\mathcal{I}}
\def\cL{\mathcal{L}}
\def\cM{\mathcal{M}}
\def\cT{\mathcal{T}}
\def\cV{\mathcal{V}}
\def\cX{\mathcal{X}}
\def\txtd{{\textnormal{d}}}
\def\txtD{{\textnormal{D}}}
\def\ra{\rightarrow}
\def\I{\infty}
\newcommand{\be}{\begin{equation}}
	\newcommand{\ee}{\end{equation}}
\newcommand{\benn}{\begin{equation*}}
	\newcommand{\eenn}{\end{equation*}}
\newcommand{\bea}{\begin{eqnarray}}
	\newcommand{\eea}{\end{eqnarray}}
\newcommand{\beann}{\begin{eqnarray*}}
	\newcommand{\eeann}{\end{eqnarray*}}
\newcommand{\myendex}{$\blacklozenge$\end{ex}}
\newcommand{\myendexerc}{$\lozenge$\end{exerc}}
\newcommand{\myendpexerc}{$\lozenge$\end{pexerc}}
\newtheorem{theorem}{Theorem}[section]
\newtheorem{definition}[theorem]{Definition}
\newtheorem{proposition}[theorem]{Proposition}
\newtheorem{remark}[theorem]{Remark}
\begin{document}

\title{\textbf{A Dynamical Systems Perspective \\ on the Analysis of Neural Networks}}
\vspace{2mm}
\author{Dennis Chemnitz \orcidlink{0000-0002-3303-3533}$^1$, Maximilian Engel \orcidlink{0000-0002-1406-8052}$^{1,2}$, \\[1mm] Christian Kuehn \orcidlink{0000-0002-7063-6173}$^{3,4,5}$ \& Sara-Viola Kuntz \orcidlink{0009-0000-4611-9742}$^{3,4,5}$}

\date{
	\small{$^1$\textit{Freie Universität Berlin, Fachbereich Mathematik, Arnimallee 6, 14195 Berlin, Germany} \\
	$^2$\textit{Universiteit van Amsterdam, Korteweg de-Vries Institute for Mathematics, Science Park 105-107, \\ 1098 XG Amsterdam, The Netherlands} \\
	$^3$\textit{Technical University of Munich, School of Computation, Information and Technology, \\ Department of Mathematics, Boltzmannstraße 3, 85748 Garching, Germany} \\
	$^4$\textit{Munich Data Science Institute (MDSI), Garching, Germany } \\
	$^5$\textit{Munich Center for Machine Learning (MCML), München, Germany }}\\[7mm]
	\large{June 16, 2026}
}

\vspace{2mm}

\maketitle

\begin{abstract}
	In this chapter, we utilize dynamical systems to analyze several aspects of machine learning algorithms. As an expository contribution we demonstrate how to re-formulate a wide variety of challenges from deep neural networks, (stochastic) gradient descent, and related topics into dynamical statements. We also tackle three concrete challenges. First, we consider the process of information propagation through a neural network, i.e., we study the input-output map for different architectures. We explain the universal embedding property for augmented neural ODEs representing arbitrary functions of given regularity, the classification of multilayer perceptrons and neural ODEs in terms of suitable function classes, and the memory-dependence in neural delay equations. Second, we consider the training aspect of neural networks dynamically. We describe a dynamical systems perspective on gradient descent and study stability for overdetermined problems. We then extend this analysis to the overparameterized setting and describe the edge of stability phenomenon, also in the context of possible explanations for implicit bias. For stochastic gradient descent, we present stability results for the overparameterized setting via Lyapunov exponents of interpolation solutions. Third, we explain several results regarding mean-field limits of neural networks. We describe a result that extends existing techniques to heterogeneous neural networks involving graph limits via digraph measures. This shows how large classes of neural networks naturally fall within the framework of Kuramoto-type models on graphs and their large-graph limits. Finally, we point out that similar strategies to use dynamics to study explainable and reliable AI can also be applied to settings such as generative models or fundamental issues in gradient training methods, such as backpropagation or vanishing/exploding gradients.
\end{abstract}

\section{Introduction}
\label{cekk_sec:intro} 

In light of the overwhelming success of artificial intelligence in recent years and its inevitable impact on many aspects of our society, a concerning disparity has emerged between the practical application of machine learning and our theoretical understanding of it. Due to the dynamic nature of both the tasks and algorithms involved, machine learning (ML) and artificial intelligence (AI) are deeply linked to the well-established mathematical theory of dynamical systems. This rich interplay opens up many possibilities for theoretical analysis and understanding.
Broadly, two main directions have emerged: (a) we use ML/AI techniques to better understand a given dynamical system, and (b) we view ML algorithms as dynamical systems themselves and try to understand ML/AI abilities using dynamics techniques. Both directions have been active for many decades~\cite{FarmerPackardPerelson,Kosko} but have recently re-gained significant interest~\cite{BruntonKutz,Weinan2017}. There have been many interesting avenues regarding (a), which are often somewhat more direct to explore, since ML can be often be used as an almost black-box tool to interpolate or extrapolate dynamics from data. In this book chapter, we entirely focus on the second direction (b). This viewpoint tends to be very challenging and will likely remain so for the foreseeable future. Some of the challenges are:

\begin{itemize}[leftmargin=9mm]
	\item[(C1)] When is it possible to actually reformulate a problem about explainable/reliable AI in a dynamics framework?
	\item[(C2)] In a dynamics context, which additional challenges do ML algorithms pose in their analysis in comparison to other nonlinear systems?
	\item[(C3)] How many analytical results can one expect to justify completely via rigorous proofs?
\end{itemize}

We shall answer (C1) in this chapter affirmatively by pointing out several instances of concretely reformulating questions about AI/ML. Reinterpreting virtually any ML algorithm dynamically can provide a very effective angle of attack for mathematical/theoretical analysis. This reformulation is entirely natural, e.g., any artificial neural network processes information dynamically as a dynamical system \emph{on} a network, while training the weights is usually done iteratively, leading to a dynamical system \emph{of} a network. This viewpoint of neural networks can also be well-embedded into the framework of adaptive (or co-evolutionary) networks that combine both dynamical aspects~\cite{Berneretal}. 
In fact, many classical results in dynamical systems can be immediately useful for understanding dynamics of ML algorithms~\cite{https://doi.org/10.48550/arxiv.2110.10295,Sharkovskii1}.
Yet, a key aspect of (C2) is that systems are usually so high-dimensional and heterogeneous in practice that preliminary reductions of the dynamics are necessary. We shall follow this paradigm here, trying to first separate the questions of neural network design (i.e., dynamics on the network) from training the weights of a given architecture (i.e., dynamics of the network). Furthermore, we are going to point out that many challenges to explain ML/AI can usually be simplified significantly, when one isolates just a single aspect of the algorithm. Only after a suitable reduction, we can hope for a full analytical proof regarding challenge (C3).

A key theme of this chapter is that, throughout, we emphasize and explain why re-considering AI/ML as a dynamical problem is natural and advantageous. Instead of then tackling the challenges~(C2) and (C3) in full generality, we focus on illustrating them via results from our own recent work. This demonstrates how dynamical systems approaches can contribute to understanding AI. Of course, the choice of results provides a biased view, and there are definitely many interesting results that one could discuss. Yet, as described in the introduction to the book, the book also serves as a final report of the first phase of a DFG priority program (SPP2298 Foundations of Deep Learning), to which all four authors of this chapter have contributed. Hence, this dual purpose is accounted for by our selection of recent works that emerged during our participation in the priority program. 

%%%%%%%%%%%%%%%%%%%%%%%%%%%%%%%%%%%%%%%%%%%%%%%%%%%%
\subsection{The Basic Framework}
\label{cekk_sec:model}

A general neural network is a function
\begin{equation}\label{cekk_eq:neural_network}
	\Phi: \R^D \times \R^d \rightarrow \R^q, \quad (\theta, x) \mapsto \Phi(\theta,x),
\end{equation}
which depends on the input data $x$ and the parameters $\theta$. A typical neural network model is defined on a graph/network and consists of two main processes: information propagation and learning. The information propagation process describes the dynamics \emph{on} the network, that is, the flow of data through the network, while the learning process describes the dynamics \emph{of} the network, that is, the evolution of its weights and biases. As the dynamics of the network influences the dynamics on the network and vice versa, a neural network is an adaptive dynamical network.

\subsubsection*{Dynamics on the Network: Information Propagation}

For fixed weights, a neural network maps the state of the input $x\in\R^d$ to the state of the output $\Phi(\theta,x)$ and by that defines a map
\begin{equation}\label{cekk_eq:input_output_map}
	\Phi_\theta: \R^d \rightarrow \R^q, \quad x \mapsto \Phi_\theta(x) \coloneqq \Phi(\theta,x).
\end{equation}
Since information propagation describes a process on fixed parameters, we refer to it as the dynamics on the network.

\subsubsection*{Dynamics of the Network: Learning Process}

To approximate given data using the input-output map $\Phi_\theta$ of a neural network, the parameters~$\theta$ can be adjusted. Typically, for supervised learning, the training data consists of input-output pairs $(x, y)\in\R^d\times\R^q$, which are used to minimize a predefined loss function using (stochastic) gradient-based methods. Gradient descent defines an update map
$$\varphi: \mathbb R^D \to \mathbb R^D,\quad \theta \mapsto \varphi(\theta) \coloneqq \theta - \eta \nabla \mathcal L(\theta),$$
where $\eta>0$ is the learning rate and $\mathcal L$ the loss function. For stochastic gradient descent, this update map is replaced by a random update map
$$\varphi: \Omega \times \mathbb R^D \to \mathbb R^D,\quad (\omega,\theta) \mapsto \varphi_\omega(\theta),$$
where $(\Omega,\mathbb P)$ is a probability space. As the learning process iteratively modifies the underlying graph, we refer to it as the dynamics of the network, i.e., the learning process modifies the weights of the edges (or links) of the underlying network.

\subsection{Structure of the Chapter}

Section 2 investigates the process of information propagation through the network, i.e., the input-output map $\Phi_\theta$ as in equation~\eqref{cekk_eq:input_output_map}, as a dynamical system.
After introducing feed-forward neural networks (Section~\ref{cekk_sec:feedforward}), neural ordinary differential equations (ODEs) and delay differential equations (DDEs) are considered as corresponding infinite depth limits of deep neural networks (Section~\ref{cekk_sec:neuralODEs}). In the following, the main results from the works \cite{KuehnKuntz2023, KuehnKuntz2024, KuehnKuntz2025} are given and discussed: the \emph{universal embedding} for augmented neural ODEs representing arbitrary functions of given regularity (Section~\ref{cekk_sec:universality}), the classification of multilayer perceptrons and neural ODEs in terms of $C^k$-function classes (Section~\ref{cekk_sec:morse}) and the \emph{memory-dependence} in neural DDEs (Section~\ref{cekk_sec:delay}).

Section~\ref{cekk_sec:opti} studies the training process of a neural network as a dynamical system, characterizing the dynamical stability of minima and discussing the influence of stability on the problem of \emph{generalization} from training to test data. After formalizing the optimization task and distinguishing between the overdetermined and the overparameterized situation (Section~\ref{cekk_subs:opti}), we introduce a dynamical systems perspective on gradient descent and study stability for overdetermined problems (Section~\ref{cekk_subs:GDStab}). 
We then extend this analysis to the overparameterized setting and describe the edge of stability phenomenon, also in the context of possible explanations for implicit bias (Section~\ref{cekk_subs:GDOpar}). After introducing a random dynamical system framework for stochastic gradient descent, and briefly discussing the overdetermined setting (Section \ref{cekk_subs:RDS}), we present the key result from \cite{ChemnitzEngel24} characterizing the stability in stochastic gradient descent for the overparameterized setting via Lyapunov exponents of interpolation solutions (Section~\ref{cekk_sec:SGDOver}).

Section~\ref{cekk_sec:multi} discusses additional important topics from machine learning that are well-described by a dynamical perspective. First, we briefly discuss Boltzmann machines and Hopfield networks (Section~\ref{cekk_sec:Boltzmann}), that can be understood within the general framework of interacting particle systems (IPS). For IPS, we then consider mean-field limits that help to deploy dynamical rigorous tools for analysis (Section~\ref{cekk_sec:mean_field}). We state a result proving that (stochastic) IPS of Kuramoto-type on quite general graphs have mean-field limits~\cite{KuehnPulido} using the theory of digraph measures and graphops~\cite{KuehnGraphops,KuehnXu}. Many deep neural networks fall into this category, including recurrent neural networks and transformers (Section~\ref{cekk_sec:RNNs}). Finally, we also want to illustrate that many other questions in ML can highly benefit from a dynamical view. We cover generative models and their various forms, allowing for the application and further development of Markov chain theory, evolutionary game theory, or qualitative properties of stochastic differential equations (Section~\ref{cekk_sec:gen_models}). As a last step, we highlight that even the notion of backpropagation is just using a variational equation along trajectories in backward time, which is a very classical dynamics concept (Section~\ref{cekk_sec:backpropagation}).

%%%%%%%%%%%%%%%%%%%%%%%%%%%%%%%%%%%%%%%%%%%%%%%%%%%%%%%%%%%%%%%%%%%%%%%%%%%%%%%%%%%
\section{Neural Networks as Dynamical Systems}\label{cekk_sec:neural}

In this section, we focus on the information propagation process through the network, given by the input-output map \index{Input-Output Map of a Neural Network} $\Phi_\theta$ for fixed weights $\theta\in\R^D$, as defined in Section~\ref{cekk_sec:model}. The dynamics considered on the network can be defined on either a discrete or a continuous time scale. In the two upcoming sections, we introduce different neural network architectures. In Section \ref{cekk_sec:feedforward}, we focus on feed-forward neural networks (FNNs) \index{Feed-Forward Neural Network (FNN)} on a discrete time scale, for example, multilayer perceptrons (MLPs), residual neural networks (ResNets), and densely connected ResNets (DenseResNets). In Section \ref{cekk_sec:neuralODEs}, we introduce neural ordinary differential equations (neural ODEs) and neural delay differential equations (neural DDEs), which can be interpreted as an infinite depth limit of ResNets and DenseResNets.

Studying the input-output map $\Phi_\theta$ of a neural network as a dynamical system correlates with the question of which functions can be represented and approximated by the map $\Phi_\theta$. In Section \ref{cekk_sec:universality}, the concepts of universal embedding and universal approximation are introduced. Afterwards, the main results of the recent works \cite{KuehnKuntz2023}, \cite{KuehnKuntz2024}, and \cite{KuehnKuntz2025} are stated. In Section \ref{cekk_sec:morse}, the geometric structure of $\Phi_\theta$ is analyzed via Morse functions in the case of MLPs and neural ODEs. These properties are used to study in Section \ref{cekk_sec:delay} the interplay between the memory capacity of neural DDEs and their approximation capabilities. Overall, we highlight the interplay between the structural design and the dynamical properties of neural networks, which contributes to a more profound and theoretical understanding of the dynamics on the network. 

\subsection{Feed-Forward Neural Networks}
\label{cekk_sec:feedforward}

In the most studied neural network architectures, the neurons are structured in layers $h_l\in\R^{d_l}$ for $l\in\{0,\ldots,L\}$, building a feed-forward neural network. The number of layers, $L$, is called the depth of the network, and the maximal number of neurons per layer, $\max_{l\in\{0,\ldots,L\}}\{d_l\}$, is called the width of the network. The resulting neural network is a map $\Phi_\theta:\mathcal{X}\rightarrow \R^q$ with input layer $h_0 = x\in\mathcal{X}$, $\mathcal{X} \subset \R^d = \R^{d_0}$, output layer $h_L \in\R^{q} = \R^{d_L}$, and hidden layers $h_1,\ldots,h_{L-1}$. 

The most famous feed-forward neural network model is the multilayer perceptron \index{Multilayer Perceptron (MLP)} (MLP), which was already studied by Rosenblatt in 1957~\cite{Rosenblatt1957}. The update rule of  a general MLP is given by 
\begin{equation} \label{cekk_eq:mlp_updaterule}
	h_{l+1} = f_l(h_{l},\theta_l) \coloneqq \widetilde{W}_l\sigma_l(W_lh_{l}+b_l)+\tilde{b}_l 
\end{equation}
for $l \in \{0,\ldots,L-1\}$ with activation function $\sigma_l: \R^{m_l} \rightarrow \R^{m_l}$, where $W_l$, $\widetilde{W}_l$ are weight matrices and $b_l$, $\tilde{b}_l$ biases of appropriate dimensions. We abbreviate the parameters of layer $h_l$ by $\theta_l \coloneqq (W_l,\widetilde{W}_l,b_l,\tilde{b}_l)$ and all parameters by $\theta = (\theta_0,\ldots,\theta_{L-1})$. The activation function $\sigma_l$ is applied component-wise and often chosen as $\tanh$, soft-plus, sigmoid, or (normal, leaky, or parametric) $\textup{ReLU}$. The update rule~\eqref{cekk_eq:mlp_updaterule} includes both the case of an outer nonlinearity if $\widetilde{W}_l = \text{Id}$, $\tilde{b}_l = 0$ and the case of an inner nonlinearity if $W_l = \text{Id}$ and $b_l = 0$. For the analysis of MLPs in Section \ref{cekk_sec:morse}, we need the additional assumption that every component $[\sigma_l]_i$, $i \in \{1,\ldots,m_l\}$ of the activation function  is strictly monotone. 

\begin{definition}[Multilayer Perceptron (MLP)] \label{cekk_def:feedforward}
	The set of all MLPs $\Phi_\theta: \mathcal{X}\rightarrow \mathbb{R}^q$, $\mathcal{X} \subset \R^d$ open, with update rule~\eqref{cekk_eq:mlp_updaterule} and component-wise applied strictly monotone activation functions $[\sigma_l]_i\in C^k(\mathbb{R},\mathbb{R})$, $k \geq 0$, is denoted by $\Xi^k(\mathcal{X},\mathbb{R}^q) \subset C^k(\mathcal{X},\mathbb{R}^q)$.
\end{definition}

\begin{figure}[t]
	\centering
	\begin{subfigure}{\textwidth}
		\centering
		\begin{overpic}[width=0.7\linewidth,,tics=10]
			{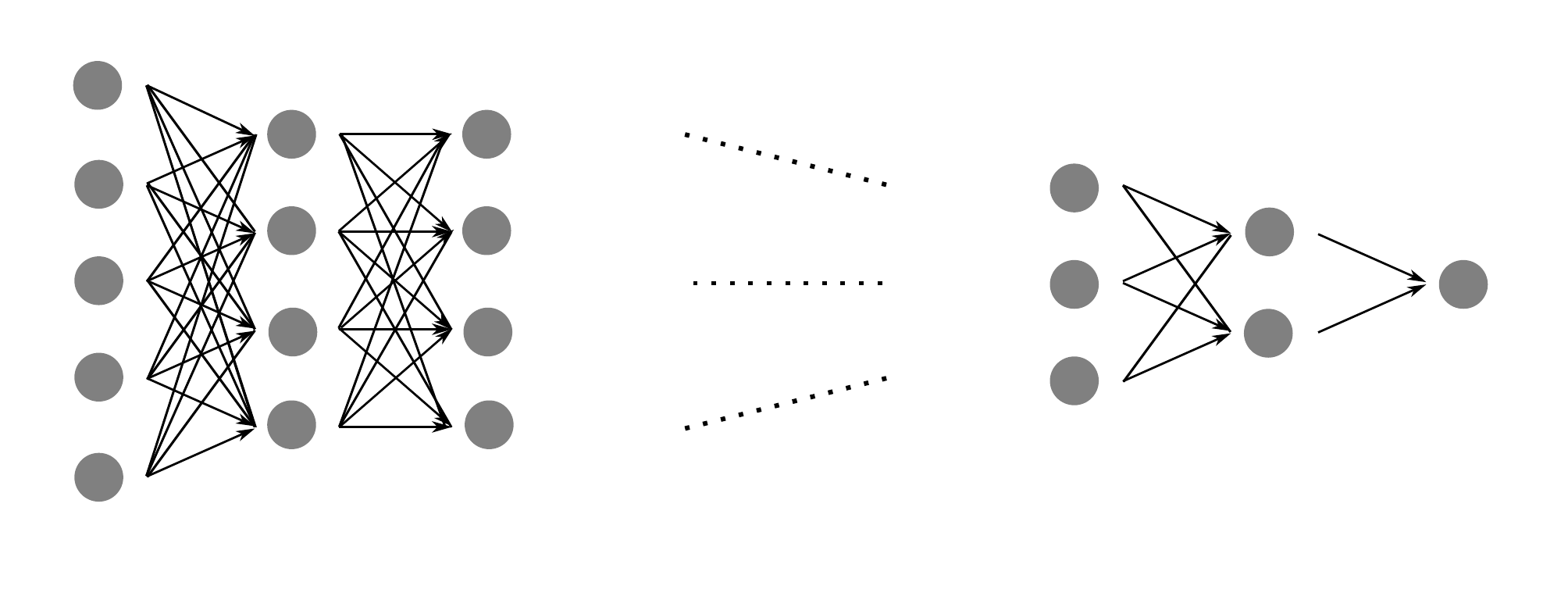}
			\put(5,1){$h_0$}
			\put(92,1){$h_L$}
		\end{overpic}
		\caption{Example of a non-augmented feed-forward neural network $\Phi \in \Xi^k_\text{N}(\R^5,\mathbb{R})$.}
	\end{subfigure}
	\begin{subfigure}{\textwidth}
		\centering
		\vspace{1mm}
		\begin{overpic}[width=0.7\linewidth,,tics=10]
			{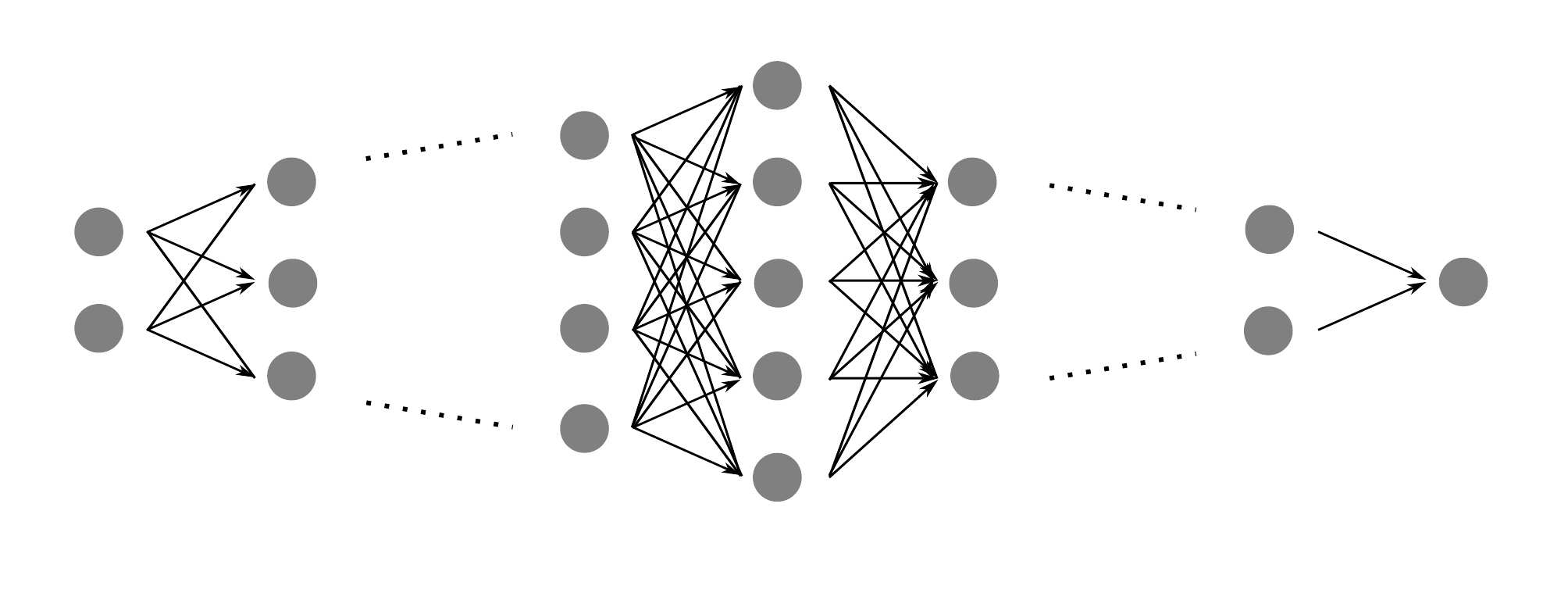}
			\put(5,1){$h_0$}
			\put(48,1){$h_{l_\text{max}}$}
			\put(92,1){$h_L$}
		\end{overpic}
		\caption{Example of an augmented feed-forward neural network $\Phi \in \Xi^k_\text{A}(\R^2,\mathbb{R})$.}
	\end{subfigure}
	\begin{subfigure}{\textwidth}
		\centering
		\vspace{1mm}
		\begin{overpic}[width=0.7\linewidth,,tics=10]
			{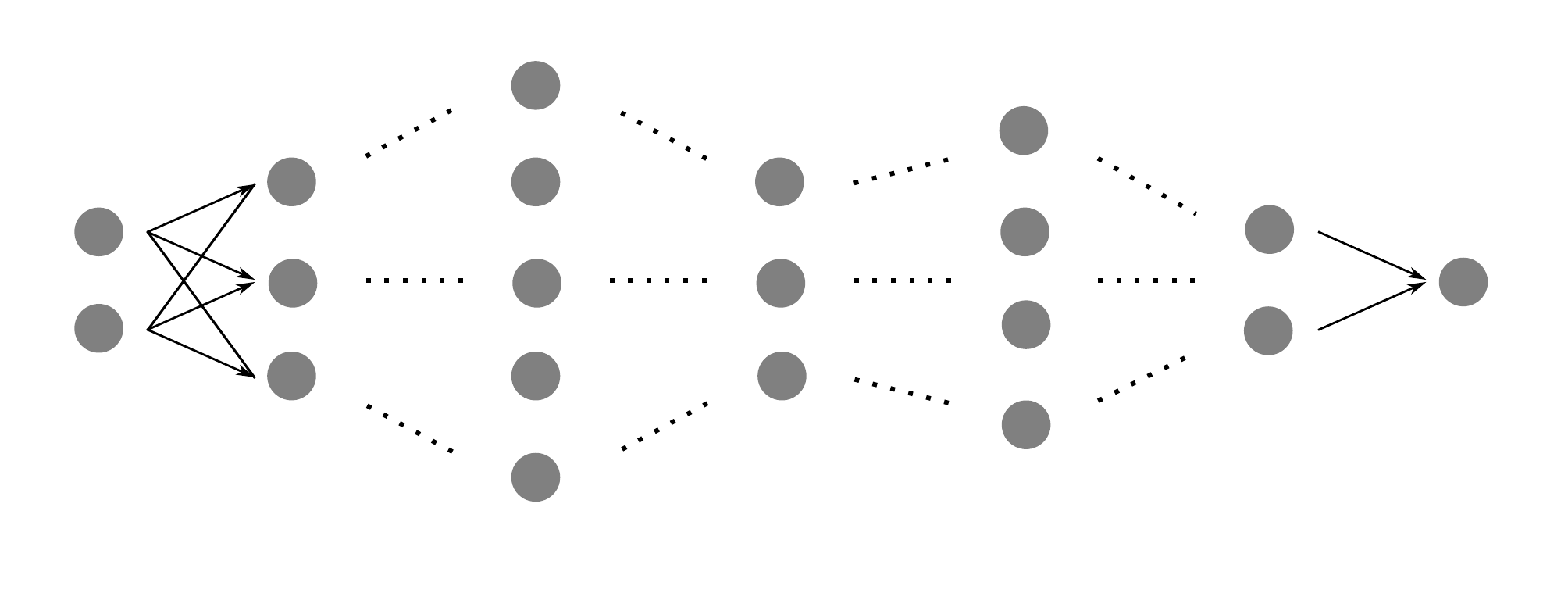}
			\put(5,1){$h_0$}
			\put(33,1){$h_i$}
			\put(48,1){$h_{l_\textup{B}}$}
			\put(64,1){$h_j$}
			\put(92,1){$h_L$}
		\end{overpic}
		\caption{Example of a feed-forward neural network with a bottleneck $\Phi \in \Xi^k_\text{B}(\R^2,\mathbb{R})$.}
	\end{subfigure}
	\caption{Structure of non-augmented, augmented, and bottleneck scalar feed-forward neural networks with $q = 1$. In general, the output of a feed-forward neural network can be multi-dimensional. The Figures~1(a)-(c) are adapted from \cite[Figure~3.2(a)-(c)]{KuehnKuntz2024}.}
	\label{cekk_fig:FNNclassification}
\end{figure}

Depending on the dimensions $d_l$ of the layers $h_l$, and the dimensions $m_l$ of the activation functions $\sigma_l$, every feed-forward neural network can be categorized into three different types of architectures. For presentation purposes, we assume in the following that $d_{l} = m_l$ for all $l \in \{0,\ldots,L-1\}$, the general case is discussed in~\cite{KuehnKuntz2024}.
\begin{itemize}
	\item \textbf{Non-augmented FNN:} \index{Non-Augmented Neural Network Architecture} the dimensions of the layers are monotonically decreasing from $h_0$ to $h_L$, i.e., $d_{l-1}\geq d_{l}$ for all $l\in\{1,\ldots,L\}$.
	\item \textbf{Augmented FNN:} \index{Augmented Neural Network Architecture} the layer of maximal dimension $h_{l_\text{max}}$ is wider than the input layer $h_0$, i.e., $d_{l_\text{max}}>d_0$, and it holds $d_{l-1}\leq d_{l}$ for all $l\in\{1,\ldots,l_\text{max}\}$ and $d_{l-1}\geq d_{l}$ for all $l\in\{l_\text{max}+1,\ldots,L\}$.
	\item \textbf{FNN with a bottleneck:} there exists a layer $h_{l_\text{B}}$, called a bottleneck, and two layers $h_i$ and $h_j$ such that $0\leq i<l_\text{B}<j\leq L$ and  $d_i>d_{l_\text{B}}$, $d_{l_\text{B}}<d_j$.
\end{itemize}
The three different types of architectures build a complete disjoint subdivision of all feed-forward neural networks structured in layers $h_l\in\R^{d_l}$. In the case of MLPs, we denote the non-augmented, augmented, and bottleneck architectures by $\Xi^k_\text{N}(\mathcal{X},\mathbb{R}^q)$, $\Xi^k_\text{A}(\mathcal{X},\mathbb{R}^q)$ and $\Xi^k_\text{B}(\mathcal{X},\mathbb{R}^q)$ respectively. In Figure \ref{cekk_fig:FNNclassification}, examples of a non-augmented, an augmented, and an FNN with a bottleneck are shown for $q = 1$. In our analysis of MLPs in Section \ref{cekk_sec:morse}, we mainly focus on single output components, corresponding to the scalar case $q=1$.

A more advanced class of feed-forward neural networks are residual neural networks \index{Residual Neural Network (ResNet)} (ResNets)~\cite{He2016}. In ResNet architectures, all layers have the same width $d = d_l$, $l\in\{0,\ldots,L\}$ and the update rule is of the form
\begin{equation}\label{cekk_eq:ResNet}
	h_{l+1} = h_l + f_l(h_{l},\theta_l)
\end{equation} 
for $l \in \{0,\ldots,L-1\}$. In comparison to the update rule~\eqref{cekk_eq:mlp_updaterule} of MLPs, the ResNet update rule~\eqref{cekk_eq:ResNet} allows for an additional shortcut/skip connection via the summand $h_l$. The additional skip connection is advantageous for gradient-based training methods of deep neural networks with a large number of layers $L \gg 1$ \cite{He2016}. Furthermore, if the update rule is independent of the layer, i.e., $f_\text{ResNet}(\cdot,\cdot) = f_l(\cdot,\cdot)$ for all $l \in \{0,\ldots,L-1\}$, the ResNet update rule~\eqref{cekk_eq:ResNet} motivates the study of neural ODEs, which are introduced in Section \ref{cekk_sec:neuralODEs}.

Even more evolved feed-forward architectures are for example U-Nets \index{U-Net} (cf.~\cite{Ronneberger2015}) and DenseNets \index{DenseNet} (cf.~\cite{Huang2017}), which allow for additional inter-layer connections. In that way, information or features of previous layers can be re-introduced in future layers. Furthermore, inter-layer connections can mitigate problems during the training process, like vanishing gradients \cite{Huang2017}. In this work, we focus on neural DDEs, which can be interpreted as the continuous time analogue of densely connected feed-forward neural networks with a ResNet structure (DenseResNets). \index{Residual Neural Network (ResNet)!Densely Connected ResNet (DenseResNet)} In DenseResNet architectures, all layers have the same  width~$d = d_l$, $l\in\{0,\ldots,L\}$ and the layer $h_{l+1}$ can depend on all previous layers, resulting in the update rule
\begin{equation}\label{cekk_eq:DenseNet}
	h_{l+1} = h_{l} + f_{\textup{Dense},l}(h_{l}, h_{l-1}, \ldots, h_0, \theta_{\textup{Dense},l})
\end{equation}
for $l \in \{0,\ldots,L-1\}$. Hereby $\theta_{\textup{Dense},l}$ stores all weights and biases between the layers $h_{l+1}$ and $h_{l+1-i}$ for every $i \in \{1,\ldots,l+1\}$. It is important to note that a DenseResNet is structurally different from a FNN or ResNet with fully connected layers, where all weight matrices have only non-zero entries. In Figure \ref{cekk_fig:DenseResNet}, all possible inter-layer connections of the DenseResNet with update rule \eqref{cekk_eq:DenseNet} are visualized.

\begin{figure}[h]
	\centering
	\begin{overpic}[scale = 0.45,,tics=10]
		{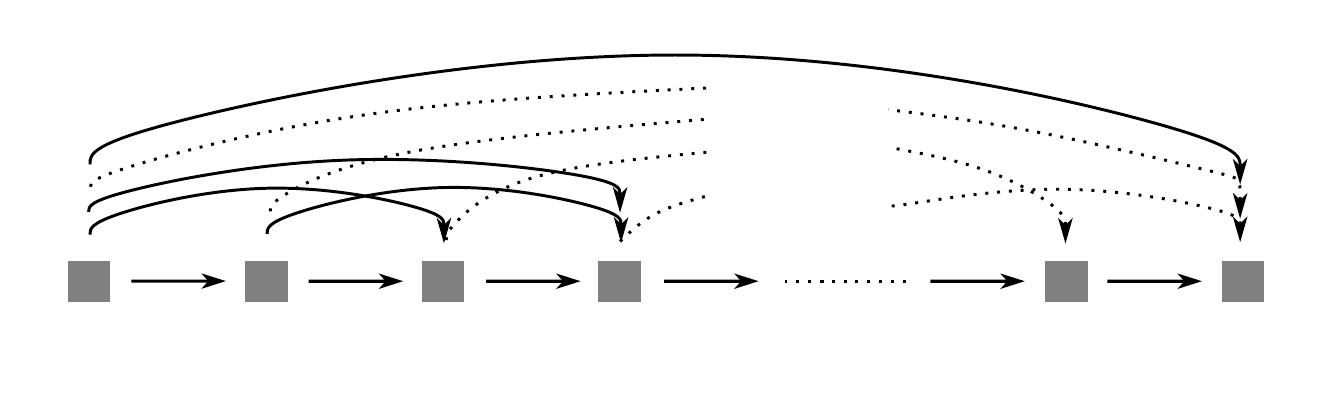}
		\put(5,1){$h_0$}
		\put(19,1){$h_1$}
		\put(32,1){$h_2$}
		\put(45,1){$h_3$}
		\put(77,1){$h_{L-1}$}
		\put(92,1){$h_L$}
	\end{overpic}
	\caption{Structure of a densely connected residual neural network (DenseResNet). In contrast to Figure~\ref{cekk_fig:FNNclassification}, every vertex represented by a square corresponds to a layer $h_l \in \R^{d_l}$. The figure is adapted from \cite[Figure 1.1(b)]{KuehnKuntz2025}.}
	\label{cekk_fig:DenseResNet}
\end{figure}

\subsection{Neural ODEs and Neural DDEs}
\label{cekk_sec:neuralODEs}

Besides neural networks on a discrete time scale, such as the feed-forward neural networks introduced in Section \ref{cekk_sec:feedforward}, architectures on a continuous time scale exist. First, we introduce neural ordinary differential equations (neural ODEs), \index{Neural Ordinary Differential Equation (Neural ODE)} which can be interpreted as an infinite depth limit of residual neural networks \index{Infinite Depth Limit} \cite{Chen2018, Weinan2017, He2016, KuehnKuntz2023}. On the one side, continuous time models like neural ODEs have the advantage of  constant memory cost during the training process, on the other side they provide insight into the dynamics of deep neural networks, as motivated in the following~\cite{Chen2018}. The iterative update rule \eqref{cekk_eq:ResNet} of ResNets with $f_\text{ResNet}(\cdot,\cdot) = f_l(\cdot,\cdot)$ for all $l \in \{0,\ldots,L-1\}$ can be obtained as an Euler discretization of the initial value problem~(IVP)
\begin{equation}\label{cekk_eq:NeuralODE1}
	\frac{\dd h}{\dd t} = f_\text{ODE} (h(t),\theta(t)), \quad h(0) = x,
\end{equation}
on the time interval $[0,T]$ with step size $\delta = \frac{T}{L}$ and a vector field $f_\text{ODE}(\cdot,\cdot)\coloneqq \frac{1}{\delta}f_\text{ResNet}(\cdot,\cdot)$. Hereby, the solution $h$ of the IVP can be interpreted as the hidden layers, and the function $\theta$ as the weights. The initial condition of the ODE is the first layer $h_0 = x\in\R^d$, and the output layer $h_L$ corresponds to the time-$T$ map $h(T)\in\R^d$ of the IVP (cf.\ \cite{Guckenheimer2002}). Typical learning algorithms are applied to stationary parameters of neural networks rather than to parameter functions. To be able to optimize with respect to stationary parameters, the IVP \eqref{cekk_eq:NeuralODE1} can be rewritten in the form
\begin{equation}\label{cekk_eq:NeuralODE2}
	\frac{\dd h}{\dd t} = f(t,h(t)), \quad h(0) = x,
\end{equation}
with a parameter-dependent non-autonomous vector field $f$. As the discretization error of an Euler discretization depends on the step size $\delta = \frac{T}{L}$, the deeper the ResNet, the better the neural ODE dynamics can approximate the ResNet dynamics.

In analogy to neural ODEs, we define neural delay differential equations (neural DDEs), \index{Neural Delay Differential Equation (Neural DDE)} which can be interpreted as an infinite depth limit of DenseResNets. Under some assumptions on the vector field (as specified in \cite{KuehnKuntz2025}), the update rule \eqref{cekk_eq:DenseNet} can be obtained as an Euler discretization of the delay differential equation (DDE)
\begin{equation}\label{cekk_eq:NeuralDDE}
	\frac{\dd h(t)}{\dd t} = F(t,h_t), \qquad h(s) = c_x(s) \quad \text{for } s \in [-\tau,0],
\end{equation}
with delay $\tau \geq 0$, delayed function $h_t \in C([-\tau,0],\R^d)$ defined by $h_t(s) = h(t+s)$ for $s\in[-\tau,0]$, parameter-dependent non-autonomous vector field $F$ and constant initial data $c_x:[-\tau,0]\rightarrow \mathbb{R}^d$ with $c_x(t) = x$. The input function $c_x$ of the neural DDE is specified by the input data $h_0 = x$ of the corresponding DenseResNet, and the output is given by the time-$T$ map $h(T)$ \cite{KuehnKuntz2025,Zhu2021}. As the state space $C([-\tau,0],\R^d)$ of the dynamical system generated by a DDE \eqref{cekk_eq:NeuralDDE} is infinite-dimensional, it is necessary to specify an initial function over the time interval $[-\tau,0]$ instead of one initial value at $t = 0$ for ODEs.

Neural ODEs and neural DDEs defined by  \eqref{cekk_eq:NeuralODE2} and \eqref{cekk_eq:NeuralDDE} are limited to modeling data with the same input and output dimensions $d$. To overcome this restriction, two affine linear layers 
\begin{equation}
	\lambda: \R^d \rightarrow \R^m, \; \lambda(x) = Wx+b, \qquad \tilde\lambda: \R^m \rightarrow \R^q, \; \tilde \lambda(x) = \widetilde Wx+ \tilde b,
\end{equation}
can be added before and after the solution of an $m$-dimensional neural ODE or DDE. A neural network $\Phi_\theta: \mathcal{X}\rightarrow\R^q$ is now defined as follows: given some initial data $x\in\mathcal{X} \subset \R^d$, the transformed state $\lambda(x)\in\R^m$ is used as point initial condition for the ODE~\eqref{cekk_eq:NeuralODE2} and $c_{\lambda(x)}\in C([-\tau,0],\R^m)$ is used as constant initial function for the DDE~\eqref{cekk_eq:NeuralDDE}. Then, the ODE/DDE is solved over the time interval $[0,T]$. The output of the neural ODE/DDE architecture is defined as
\begin{equation}\label{cekk_eq:neuralODEDDE}
	\Phi_\theta(x) = \tilde \lambda(h(T)) \in \R^q,
\end{equation}
which is the second linear transformation $\tilde \lambda$ applied to the time-$T$ map $h(T)$. The structure of a neural ODE/DDE with two additional affine linear layers is visualized in Figure \ref{cekk_fig:neuralODEDDE}. The parameters~$\theta$ of the continuous time neural network~\eqref{cekk_eq:neuralODEDDE} are given by the weights and biases $W,\widetilde W, b, \tilde b$ and the parameters of the vector field $f$ or~$F$. 

In the following, we do not restrict our analysis to a fixed parametrization of the vector field, but consider general, non-parameterized neural ODE/DDE architectures. To that purpose, we consider for neural ODEs the space $C^{0,k}([0,T]\times \mathbb{R}^m,\mathbb{R}^m)$, $k \geq 1$, which consists of all functions that are continuous in the first variable and $k$ times continuously differentiable in the second variable. For neural DDEs, we consider the space $C^{0,k}_b([0,T]\times C([-\tau,0],\R^m),\mathbb{R}^m)$, $k \geq 1$, of functions that are continuous in the first variable and have bounded continuous derivatives up to order $k$ with respect to the second variable.

\begin{definition}[Neural ODE] \label{cekk_def:NODE}
	For $k \geq 1$, the set of all neural ODE architectures $\Phi_\theta: \mathcal{X}\rightarrow \mathbb{R}^q$, with $\mathcal{X} \subset \R^d$ open, defined by~\eqref{cekk_eq:neuralODEDDE} and based on the initial value problem \eqref{cekk_eq:NeuralODE2} with non-parameterized vector field $f\in C^{0,k}([0,T]\times \mathbb{R}^m,\mathbb{R}^m)$, is denoted by $\textup{NODE}^k(\mathcal{X},\R^q) \subset C^k(\mathcal{X},\R^q)$. 
\end{definition}

\begin{definition}[Neural DDE] \label{cekk_def:NDDE}
	For $k \geq 1$, the set of all neural DDE architectures $\Phi_\theta: \mathcal{X}\rightarrow \mathbb{R}^q$, with $\mathcal{X}\subset \R^d$ open, defined by~\eqref{cekk_eq:neuralODEDDE} and based on the DDE \eqref{cekk_eq:NeuralDDE} with non-parameterized vector field $F\in C^{0,k}_b([0,T]\times C([-\tau,0],\R^m),\mathbb{R}^m)$, is denoted by $\textup{NDDE}_\tau^k(\mathcal{X},\R^q) \subset C^k(\mathcal{X},\R^q)$. 
\end{definition}

By definition of the vector fields in~\eqref{cekk_eq:NeuralODE2} and \eqref{cekk_eq:NeuralDDE}, it follows that every neural DDE with delay $\tau = 0$ is a neural ODE. Neural ODEs and neural DDEs can also be well-defined under weaker regularity assumptions on the vector field than in Definitions~\ref{cekk_def:NODE} and \ref{cekk_def:NDDE}, as shown in~\cite{KuehnKuntz2023, KuehnKuntz2025}.
Depending on the dimensions $d$ and $m$, we can distinguish different types of architectures of neural ODEs and neural DDEs:
\begin{itemize}
	\item \textbf{Non-augmented neural ODE / DDE:} \index{Non-Augmented Neural Network Architecture} it holds $m\leq d$ and the weight matrices $W,\widetilde W$ have full rank.
	\item \textbf{Augmented neural ODE / DDE:} \index{Augmented Neural Network Architecture}  it holds $m> d$ and the weight matrices $W,\widetilde W$ have full rank.
	\item \textbf{Degenerate neural ODE / DDE:} one of the weight matrices $W,\widetilde W$ is singular.
\end{itemize}
We denote non-augmented architectures by $\textup{NODE}^k_\text{N}(\mathcal{X},\mathbb{R}^q)$ / $\textup{NDDE}^k_{\tau,\text{N}}(\mathcal{X},\mathbb{R}^q)$, augmented architectures by $\textup{NODE}^k_{\text{A}}(\mathcal{X},\mathbb{R}^q)$ / $\textup{NDDE}^k_{\tau,\text{A}}(\mathcal{X},\mathbb{R}^q)$ and degenerate architectures by $\textup{NODE}^k_{\text{D}}(\mathcal{X},\mathbb{R}^q)$ / $\textup{NDDE}^k_{\tau,\text{D}}(\mathcal{X},\mathbb{R}^q)$, respectively. In Figure \ref{cekk_fig:neuralODEDDE}, the structure of a non-augmented neural ODE and an augmented neural DDE is visualized. By using the theory of continuous-time dynamical systems, we can gain a better understanding of the theoretical properties of neural ODEs and neural DDEs. Via approximation results, these insights can be transferred to deep ResNets defined in~\eqref{cekk_eq:ResNet} and deep DenseResNets defined in \eqref{cekk_eq:DenseNet}. In summary, we can immediately use the dynamical systems approach of ODEs and DDEs to understand classes of neural network architectures in the infinite depth limit.

\begin{figure}[t]
	\centering
	\begin{subfigure}{\textwidth}
		\centering
		\begin{overpic}[scale = 0.38,,tics=10]
			{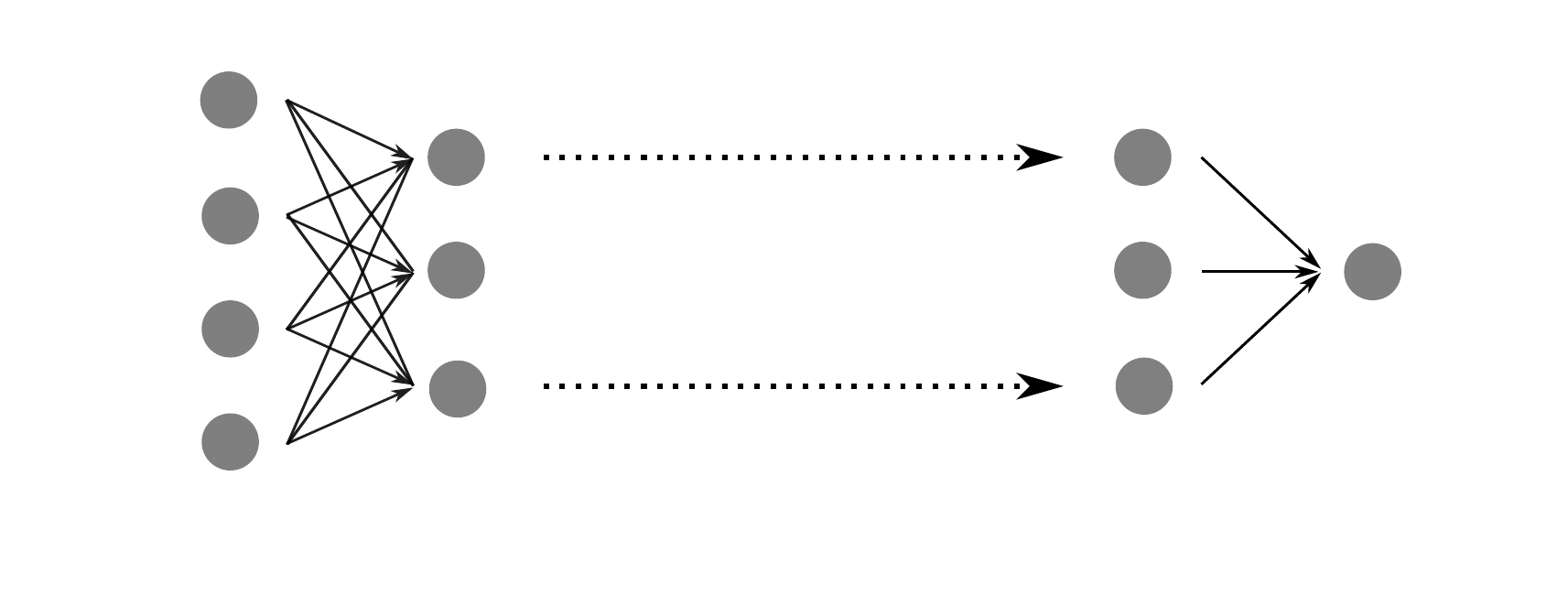}
			\put(12,1){$x$}
			\put(25,1){$\lambda(x)$}
			\put(69,1){$h(T)$}
			\put(84,1){$\Phi_\theta(x)$}
			\put(46,19){\normalsize{ODE}}
		\end{overpic}
		\caption{Example of a non-augmented neural ODE $\Phi \in \text{NODE}^k_{\text{N}}(\R^4,\mathbb{R})$ with $m \leq d$.}
	\end{subfigure}
	\begin{subfigure}{\textwidth}
		\centering
		\vspace{2mm}
		\begin{overpic}[scale = 0.38,,tics=10]
			{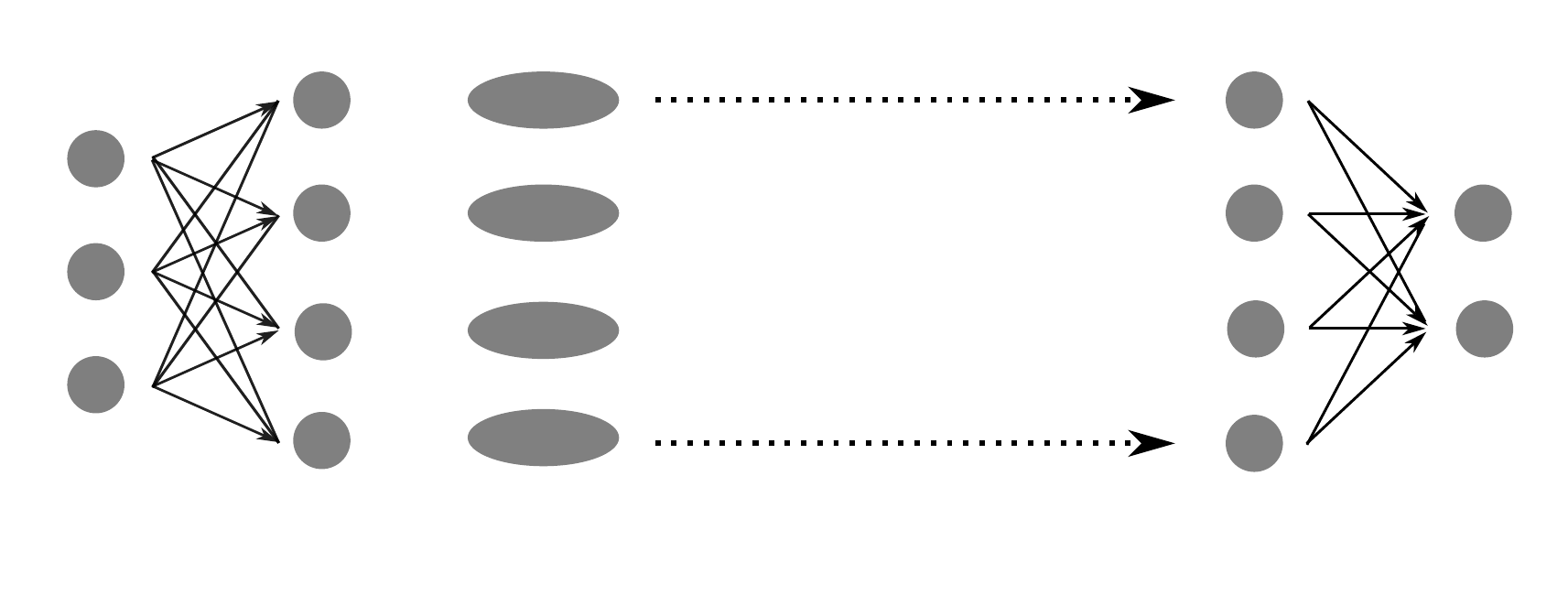}
			\put(5,1){$x$}
			\put(17,1){$\lambda(x)$}
			\put(31,1){$c_{\lambda(x)}$}
			\put(76,1){$h(T)$}
			\put(91,1){$\Phi_\theta(x)$}
			\put(53,19){\normalsize{DDE}}
		\end{overpic}
		\caption{Example of an augmented neural DDE $\Phi \in \text{NDDE}^k_{\tau,\text{A}}(\R^3,\mathbb{R}^2)$ with $m > d$.}
	\end{subfigure}
	\caption{Structure of the general neural ODE/DDE architecture with two affine linear layers. Figure~3(a) is adapted from \cite[Figure 4.1(a)]{KuehnKuntz2024} and Figure 3(b) is adapted from \cite[Figure 2.4(b)]{KuehnKuntz2025}.}
	\label{cekk_fig:neuralODEDDE}
\end{figure}

\subsection{Universal Embedding and Universal Approximation}
\label{cekk_sec:universality}

In supervised learning, neural networks are used to learn a given ground truth data
\begin{equation}
	\Psi: \mathcal{X} \rightarrow \mathcal{Y}, \quad x \mapsto y\coloneqq \Psi(x),
\end{equation}
with topological spaces $\mathcal{X}$ and $\mathcal{Y}$. Hence, a desired property of neural networks $\Phi_\theta:\mathcal{X}\rightarrow\mathcal{Y}$ is universal approximation, i.e., the property to approximate any function $\Psi$ of a given function space arbitrarily well. In the context of the function class $C^k(\mathcal{X},\mathcal{Y})$ of $k$ times continuously differentiable functions mapping from $\mathcal{X}$ to~$\mathcal{Y}$, universal approximation is defined as follows.

\begin{definition}[Universal Approximation \index{Universal Approximation} \cite{KuehnKuntz2023}] \label{cekk_def:universalapproximation}
	A neural network $\Phi_\theta: \mathcal{X} \rightarrow \mathcal{Y}$ with parameters $\theta$, topological space $\mathcal{X}$ and metric space $\mathcal{Y}$ has the universal approximation property with respect to the space $C^k(\mathcal{X},\mathcal{Y})$, $k \geq 0$, if for every $\varepsilon >0$ and for each function $\Psi \in C^k(\mathcal{X},\mathcal{Y})$, there exists a choice of parameters~$\theta$, such that $\textup{dist}_\mathcal{Y}(\Phi_\theta(x),\Psi(x)) < \varepsilon$ for all $x \in \mathcal{X}$.
\end{definition}

For many feed-forward neural network architectures, the universal approximation property can be obtained by increasing the width, the depth, or the number of parameters of the network~\cite{Cybenko1989, Hornik1989, Lin2018, Pinkus1999, Schaefer2006}. In fact, it was proven as early as the 1950s and 1960s that superpositions and/or iteration of mappings can be used to approximate functions and sequences using very simple building blocks~\cite{Kolmogorov1,Sharkovskii1}. Also, for neural ODEs, universal approximation theorems can be proven, especially by augmenting the phase space~\cite{Dupont2019, Kidger2022, KuehnKuntz2023, RuizBalet2023, Zhang2020}. Besides universal approximation, it is mathematically often more transparent to study universal embedding, i.e., the exact representation of functions by a neural network. Analyzing the geometry of the exact output of a neural network, depending on the structure, can provide a fundamental understanding of why certain architectures outperform others.

\begin{definition}[Universal Embedding \index{Universal Embedding} \cite{KuehnKuntz2023}]\label{cekk_def:universalembedding}
	A neural network $\Phi_\theta: \mathcal{X} \rightarrow \mathcal{Y}$ with parameters $\theta$ and topological spaces $\mathcal{X}$ and $\mathcal{Y}$ has the universal embedding property with respect to the space $C^k(\mathcal{X},\mathcal{Y})$, $k \geq 0$, if for every function $ \Psi \in C^k(\mathcal{X},\mathcal{Y})$, there exists a choice of parameters~$\theta$, such that $\Phi_\theta(x) = \Psi(x)$ for all $x \in \mathcal{X}$.
\end{definition}

By definition, every neural network $\Phi_\theta$ that has the universal embedding property also has the universal approximation property. In the upcoming Sections \ref{cekk_sec:morse} and \ref{cekk_sec:delay}, we present the main results of the recent works \cite{KuehnKuntz2024} and \cite{KuehnKuntz2025} about the geometric structure of the input-output map of MLPs, neural ODEs, and neural DDEs and their implications on universal embedding and universal approximation. In \cite{KuehnKuntz2023}, the embedding capability of neural ODEs is studied via iterative functional equations, differential geometry, suspension flows, and dynamical systems theory. In contrast to MLPs, it is possible to prove a universal embedding result for neural ODEs with a general non-autonomous vector field $f$ if the dimension of the vector field is sufficiently large.

\begin{theorem}[Universal Embedding for Augmented Neural ODEs \cite{KuehnKuntz2023}]\label{cekk_th:universalembedding}    
	The neural ODE architecture $\textup{NODE}_\textup{A}^k(\mathcal{X},\R^q)$, $\mathcal{X} \subset \R^d$ open, $k\geq 1$, has for $m \geq d+q$ the universal embedding property with respect to the space $C^k(\mathcal{X},\R^q)$.
\end{theorem}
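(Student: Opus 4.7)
The plan is to realize any $\Psi \in C^k(\R^d,\R^q)$ exactly by exploiting the extra phase-space dimensions. Since $m \geq d+q$, I would split $\R^m = \R^d \oplus \R^q \oplus \R^{m-d-q}$ and assign the three blocks disjoint roles: the first block carries the input unchanged, the middle block accumulates the value $\Psi(x)$, and the remaining coordinates stay inactive. Concretely, I would take the first affine layer to be $\lambda(x) = (x,0,0)$, so that $W = \begin{pmatrix} I_d \\ 0 \end{pmatrix}$ has full rank $d$, and the second affine layer to be $\tilde\lambda(h_1,h_2,h_3) = h_2$, so that $\widetilde W = (0 \;\; I_q \;\; 0)$ has full rank $q$.

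For the vector field, I would fix $T = 1$ and define $f : [0,1] \times \R^m \to \R^m$ by
\[
f\bigl(t,(h_1,h_2,h_3)\bigr) = \bigl(0,\,\Psi(h_1),\,0\bigr).
\]
This is autonomous, continuous in $t$, and $k$-times continuously differentiable in $h$ because $\Psi \in C^k$, so $f \in C^{0,k}([0,1]\times\R^m,\R^m)$ as required by Definition~\ref{cekk_def:NODE}. Starting from $h(0) = \lambda(x) = (x,0,0)$, the first and third components are frozen (their derivatives vanish identically), while the middle block satisfies $\dot h_2 = \Psi(x)$ and hence $h_2(1) = \Psi(x)$. Therefore $h(1) = (x,\Psi(x),0)$ and $\Phi_\theta(x) = \tilde\lambda(h(1)) = \Psi(x)$ for every $x \in \R^d$, which is exact representation rather than mere approximation. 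Because $m \geq d+q > \max\{d,q\}$ and both $W, \widetilde W$ have full rank by construction, the architecture indeed lies in $\textup{NODE}^k_{\textup{A}}(\R^d,\R^q)$.

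The main conceptual content — which has to be made visible rather than proved with any real difficulty — is that $d+q$ is precisely the number of dimensions needed to decouple two incompatible roles: preserving $x$ so that the flow can query $\Psi(x)$ at every time, and producing the value $\Psi(x)$ in fresh coordinates while the time-$T$ map remains a diffeomorphism onto its image (as any ODE flow must be), even when $\Psi$ is neither injective nor orientation-preserving. The pointwise-in-time expression $\Psi(h_1)$ appearing in $f$ is legitimate precisely because $h_1(t) \equiv x$ along the trajectory. The only technical point to check is that $f$ inherits exactly the $C^k$-regularity of $\Psi$ in the spatial variable, which is immediate because $h_1$ enters $f$ only through $\Psi$ and the other components of $f$ are constant. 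No non-autonomous construction, no isotopy argument, and no approximation step is needed — the augmentation does all the work, and one expects the same scheme to fail as soon as $m < d+q$, since there is then no room to keep a clean copy of $x$ alongside a freshly built copy of $\Psi(x)$.
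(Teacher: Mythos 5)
Your construction is correct and is the natural elementary proof: with the three-block split, $\dot h_1=0$ freezes a clean copy of the input so that $h_1(t)\equiv x$, $\dot h_2=\Psi(h_1)\equiv\Psi(x)$ writes $\Psi(x)$ into the middle block by $t=1$, the third block is inert, and your $W,\widetilde W$ are full-rank with $m\geq d+q>\max\{d,q\}$, so $\Phi_\theta\in\textup{NODE}^k_{\textup{A}}(\R^d,\R^q)$ and $\Phi_\theta=\Psi$ exactly. The chapter states this theorem without reproducing a proof and cites \cite{KuehnKuntz2023}, whose broader apparatus (iterated functional equations, differential geometry, suspension flows) is invoked to also treat the non-augmented and degenerate regimes and to derive the obstructions of Theorem~\ref{cekk_th:classification}; your shear-map argument is the right core for the augmented case and is the standard device in the augmented neural ODE literature, so the two are not really in conflict --- you have isolated the elementary piece. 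One small loose end worth making explicit: the theorem allows $k=0$, for which your $f$ is only $C^0$ in $h$ and Picard--Lindel\"of gives no uniqueness in general; note that the triangular structure still forces a unique solution from initial data $(x,0,0)$ (first $\dot h_1=0$ pins $h_1\equiv x$, which turns $\dot h_2=\Psi(x)$ into integration of a known function, then $\dot h_3=0$), so $\Phi_\theta$ stays well-defined even at that endpoint regularity.
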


In the case of scalar neural ODEs, i.e., neural ODEs with $q=1$, Theorem \ref{cekk_th:universalembedding} holds for all augmented neural ODEs, as the condition $m\geq d+q =d+1$ is equivalent to the definition of an augmented neural ODE in Section \ref{cekk_sec:neuralODEs}, given by $m> d$.

\subsection{Analysis of the Geometric Structure via Morse Functions}
\label{cekk_sec:morse}

In this section, we restrict the analysis to scalar MLPs \index{Multilayer Perceptron (MLP)} and scalar neural ODEs, \index{Neural Ordinary Differential Equation (Neural ODE)} i.e., the output dimension is $q = 1$. In the case of neural networks with $q>1$, our results apply to single components of the output. We aim to analyze the input-output map $\Phi_\theta:\mathcal{X}\rightarrow \R$, $\mathcal{X} \subset \R^d$ of scalar MLPs and scalar neural ODEs with respect to the property of the existence and regularity of critical points, which is often characterized via Morse functions. 

\begin{definition}[Morse Function~\cite{Hirsch1976, Morse1934}] \label{cekk_def:morse}
	A map $\Psi \in C^2(\mathcal{X}, \mathbb{R})$, $\mathcal{X}\subset \R^d$ open, is called a Morse function \index{Morse Function} if all critical points of $\Psi$ are non-degenerate, i.e., for every critical point $p \in \mathcal{X}$ defined by a zero gradient $\nabla \Psi(p) = 0 \in \mathbb{R}^d$, the Hessian matrix $H_{\Psi}(p)\in \mathbb{R}^{d \times d}$ is non-singular.  
\end{definition}

In the Banach space of smooth scalar bounded functions, Morse functions are dense, i.e., it is a generic property of a smooth function to be Morse. To not only distinguish if a function is Morse or not, but also to classify the existence of critical points, we define three different subspaces of the function space $C^k(\mathcal{X},\mathbb{R})$, $\mathcal{X}\subset \R^d$.

\begin{definition}[$C^k$-Function Classes] \label{cekk_def:subsets}
	For $k \geq 2$, depending on the existence and regularity of critical points, we define three subspaces of $C^k(\mathcal{X},\mathbb{R})$, $\mathcal{X}\subset \R^d$ open:
	\begin{flalign*}
		\quad (\mathcal{C}1)^k(\mathcal{X},\mathbb{R}) \coloneqq &\left\{ \Psi \in C^k(\mathcal{X}, \mathbb{R}) : \nabla  \Psi(x) \neq 0\;  \forall x \in \mathcal{X}\right\}, && \\
		\quad (\mathcal{C}2)^k (\mathcal{X},\mathbb{R})\coloneqq &\left\{ \Psi \in C^k(\mathcal{X}, \mathbb{R}) : \left(\exists \, p:  \nabla  \Psi(p) = 0\right) \; \text{and} \right. && \\
		&\left. \; \left(\nabla  \Psi(q) = 0 \; \Rightarrow \; H_{ \Psi}(q) \textup{ is non-singular}\right) \right\}, && \\
		(\mathcal{C}3)^k(\mathcal{X},\mathbb{R}) \coloneqq &\left\{ \Psi \in C^k(\mathcal{X}, \mathbb{R}) : \exists \, p:  \left(\nabla  \Psi(p) = 0 \; \text{and} \; H_{ \Psi}(p) \textup{ is singular}\right) \right\}. &&
	\end{flalign*}
	The defined subspaces are non-empty and build a disjoint subdivision of  $C^k(\mathcal{X},\mathbb{R})$, i.e., $C^k(\mathcal{X},\mathbb{R}) = (\mathcal{C}1)^k(\mathcal{X},\mathbb{R})\, \dot{\cup} \, (\mathcal{C}2)^k(\mathcal{X},\mathbb{R}) \, \dot{\cup}\, (\mathcal{C}3)^k(\mathcal{X},\mathbb{R})$.
\end{definition}

By definition, all functions of the classes $(\mathcal{C}1)^k(\mathcal{X},\mathbb{R})$ and $(\mathcal{C}2)^k(\mathcal{X},\mathbb{R})$ are Morse functions, and no function of the class $(\mathcal{C}3)^k(\mathcal{X},\mathbb{R})$ is a Morse functions. One-dimensional examples of the defined function classes are the following: $(\mathcal{C}1)^k(\R,\mathbb{R})$ includes all strictly monotone functions, $(\mathcal{C}2)^k(\R,\mathbb{R})$ includes the quadratic function $x \mapsto x^2$ and $(\mathcal{C}3)^k(\R,\mathbb{R})$ contains all higher-order monomials $x^n$ with $n\geq3$. For \mbox{$d = 2$}, two examples of Morse functions of the class $(\mathcal{C}2)^k(\R^2,\mathbb{R})$, represented by the circle and the XOR data set, are visualized in Figure \ref{cekk_fig:morsefunction}.

\begin{figure}[h]
	\centering
	\begin{subfigure}{0.45\textwidth}
		\centering
		\begin{overpic}[scale = 0.6,,tics=10]
			{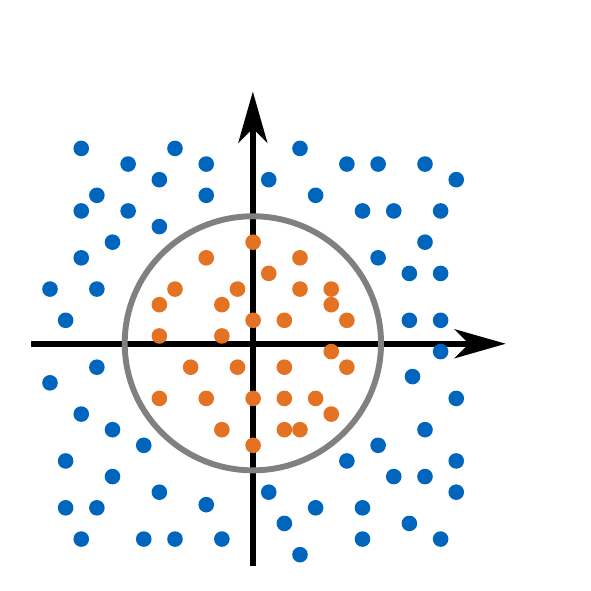}
			\put(88,40){$x_1$}
			\put(39,89){$x_2$}
		\end{overpic}
		\caption{Circle data set with underlying Morse function $\Psi(x) = x_1^2+x_2^2-1.$}
	\end{subfigure}
	\begin{subfigure}{0.05\textwidth}
		\textcolor{white}{.}
	\end{subfigure}
	\begin{subfigure}{0.45\textwidth}
		\centering
		\begin{overpic}[scale = 0.6,,tics=10]
			{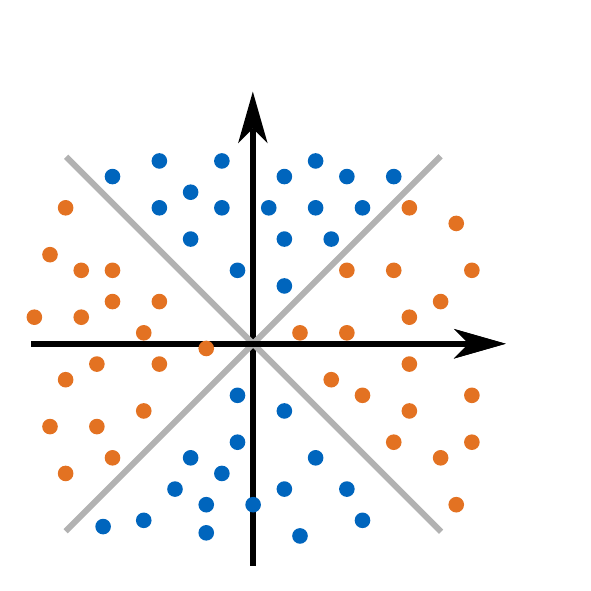}
			\put(88,40){$x_1$}
			\put(39,89){$x_2$}
		\end{overpic}
		\caption{XOR data set with underlying Morse function $\Psi(x) = x_2^2-x_1^2.$}
	\end{subfigure}
	\caption{Two-dimensional datasets with underlying Morse function $\Psi \in (\mathcal{C}2)^k(\R^2,\R)$: points with $\Psi(x)<0$ are visualized in orange, and points with $\Psi(x)>0$ in blue.}
	\label{cekk_fig:morsefunction}
\end{figure}

In the context of scalar MLPs and scalar neural ODEs, Morse functions play an essential role. This is due to the fact that, depending on the type of architecture (non-augmented, augmented, bottleneck/degenerate) defined in Sections \ref{cekk_sec:feedforward} and \ref{cekk_sec:neuralODEs}, different results regarding the existence and regularity of critical points are obtained. The upcoming theorem focuses on MLPs, where all weight matrices have full rank. If an MLP has a singular weight matrix, we can identify via an explicit construction an equivalent MLP, called the normal form, that has the same input-output map $\Phi_\theta$, but all weight matrices have full rank  \cite{KuehnKuntz2024}. By applying the algorithm to obtain the normal form with only full rank weight matrices, the underlying structure and hence also the classification (non-augmented, augmented, bottleneck) can change.

\begin{theorem}[Classification of scalar MLPs and scalar Neural ODEs \cite{KuehnKuntz2024}] \label{cekk_th:classification}
	Let $\mathcal{X}\subset \R^d$ open and consider scalar MLPs $\Phi_\theta \in \Xi^k(\mathcal{X},\R)$ with only full rank matrices and weight space $\mathbb{W} = (W_0,\widetilde{W}_0,\ldots,  W_{L-1}, \widetilde W_{L-1})$ and scalar neural ODEs $\Phi_\theta \in \textup{NODE}^k(\mathcal{X},\R)$ with weight space $\mathbb{V} = (W,\widetilde W)$. Then, depending on the underlying structure of the network (non-augmented, augmented, bottleneck/degenerate), the classification results in Table \ref{cekk_tab:results} hold.
\end{theorem}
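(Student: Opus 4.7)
The plan is to prove the classification architecture by architecture, determining which of the function classes $(\mathcal{C}1)^k$, $(\mathcal{C}2)^k$, $(\mathcal{C}3)^k$ the input-output map $\Phi_\theta$ can belong to. The unifying tool is the chain rule: for both MLPs and neural ODEs, $\nabla \Phi_\theta$ factors through products of linear weight maps and (nonlinear) internal derivatives. Whether $\nabla \Phi_\theta$ can vanish, and whether the Hessian at a critical point must be singular, is then read off from the rank properties of the weight matrices together with the local behaviour of the activations or of the ODE flow.

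For non-augmented MLPs I would show that the layer Jacobian coming from~\eqref{cekk_eq:mlp_updaterule}, which has the product form $\widetilde{W}_l \, \mathrm{diag}(\sigma_l'(W_l h_l + b_l)) \, W_l$, is everywhere surjective: strict monotonicity of $\sigma_l$ makes the diagonal factor invertible, and the full-rank together with the non-augmented dimension condition force the outer linear factors to surject. Since a composition of surjective linear maps is surjective, $D\Phi_\theta$ is a nowhere-vanishing row vector in $\R^{1\times d}$, so $\Phi_\theta \in (\mathcal{C}1)^k$. For non-augmented scalar neural ODEs I would use the factorisation $\Phi_\theta = \tilde{\lambda} \circ \varphi_T \circ \lambda$, where $\varphi_T$ is the time-$T$ flow of~\eqref{cekk_eq:NeuralODE2} and is a $C^k$ diffeomorphism of $\R^m$ by smooth dependence on initial conditions; the chain rule $D\Phi_\theta = \widetilde{W} \, D\varphi_T(\lambda(x)) \, W$ then yields a nowhere-vanishing row vector whenever $m \leq d$ and both $W, \widetilde{W}$ are full rank, again placing $\Phi_\theta$ in $(\mathcal{C}1)^k$.

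For the augmented neural ODE case the proof is immediate from Theorem~\ref{cekk_th:universalembedding}: the augmentation condition $m > \max\{d,q\} = d$ is equivalent to $m \geq d+1 = d+q$, so every $\Psi \in C^k(\R^d, \R)$ is exactly representable, and representatives of all three classes exist (e.g.\ a linear form, a positive-definite quadratic, and a higher-order monomial). For augmented MLPs I would argue constructively that $(\mathcal{C}1)^k$ and $(\mathcal{C}2)^k$ are attained, and exclude $(\mathcal{C}3)^k$ by a rank argument: the expansion phase $l \leq l_{\max}$ consists of injective immersions and the contraction phase of submersions, so the Hessian at any critical point inherits non-degeneracy through a factorisation via $\R^{d_{l_{\max}}}$ using the full-rank and strict monotonicity hypotheses.

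For bottleneck MLPs and degenerate neural ODEs a rank-deficient layer or weight matrix forces $\Phi_\theta$ to be constant along a non-trivial affine subspace, which pins $\nabla \Phi_\theta$ to a proper subspace and the Hessian to carry a corresponding null direction; any critical point is therefore automatically degenerate, so $(\mathcal{C}3)^k$ is attained whenever one exists, while $(\mathcal{C}1)^k$ representatives are straightforward to construct. The main obstacle I anticipate is the non-attainability of $(\mathcal{C}3)^k$ for augmented MLPs: propagating strict Hessian non-degeneracy through alternating full-rank linear maps and component-wise strictly monotone nonlinearities requires careful bookkeeping of the $\sigma_l''$ contributions through both the expansion and contraction phases, and unlike the neural ODE setting, MLPs offer no ambient diffeomorphism on a single phase space that would trivialise the Hessian analysis.
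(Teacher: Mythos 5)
Your proposal misreads two of the six entries in Table~\ref{cekk_tab:results}, and in both cases the misreading sends the argument in the wrong direction.

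\textbf{Augmented case.} The table does \emph{not} assert that $(\mathcal{C}3)^k$ is unattainable by augmented MLPs or augmented neural ODEs; it asserts that \emph{for Lebesgue-almost every choice of weights} the resulting map lies in $(\mathcal{C}1)^k\cup(\mathcal{C}2)^k$. Your appeal to Theorem~\ref{cekk_th:universalembedding} for the NODE row actually establishes the opposite of what you would need: universal embedding shows that $(\mathcal{C}3)^k$ functions \emph{are} represented by some choice of weights, so trying to ``exclude'' $(\mathcal{C}3)^k$ altogether cannot succeed. What is actually required is a genericity (parametric transversality / Sard-type) argument in the weight space $\mathbb W$ resp.\ $\mathbb V$, showing that the set of weights producing a non-Morse output has Lebesgue measure zero. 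This is the hard content of the augmented row and your sketch does not engage with it; the ``rank argument'' you propose for augmented MLPs (immersions in the expansion phase, submersions in the contraction phase) does not by itself control the Hessian at a critical point, and the ``obstacle'' you flag at the end is an obstacle precisely because you have set yourself the wrong target.

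\textbf{Bottleneck MLP case.} Your argument ``a rank-deficient layer or weight matrix forces $\Phi_\theta$ to be constant along a non-trivial affine subspace'' conflates two different situations. Degenerate neural ODEs really do have a singular $W$ (or $\widetilde W$), and a singular $W$ makes $\lambda$, and hence $\Phi_\theta$, constant along $\ker W$, so your conclusion $(\mathcal{C}1)^k$ or $(\mathcal{C}3)^k$ is correct there. But the theorem is stated for MLPs ``with only full rank matrices'': a bottleneck MLP $\Phi_\theta\in\Xi^k_{\textup{B}}$ has a low-dimensional intermediate \emph{layer}, not a singular weight matrix, and the map from $\R^d$ into that layer passes through nonlinear activations, so $\Phi_\theta$ need not be constant along any affine subspace and its Hessian at a critical point need not be singular. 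Indeed the table explicitly allows bottleneck MLPs to lie in $(\mathcal{C}2)^k$. Your argument would wrongly exclude this class. Separating the two cases --- dimensional bottleneck through a nonlinearity versus a genuinely singular linear factor --- is exactly the source of the asymmetry between the MLP and NODE columns in the bottom row, and it needs its own treatment.

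As a smaller point, in the non-augmented MLP argument the factorisation $\widetilde W_l\,\mathrm{diag}(\sigma_l')\,W_l$ also runs through an intermediate dimension (the width at which $\sigma_l$ is applied), and full rank of the two outer factors together with invertibility of the diagonal does not by itself guarantee that the product is surjective onto $\R^{d_{l+1}}$ unless you also control the interaction of the two subspaces $\mathrm{im}(W_l)$ and $\ker(\widetilde W_l)$; some additional bookkeeping or a genericity assumption is needed. Your non-augmented neural ODE argument, by contrast, is sound: surjectivity of $W$, invertibility of $D\varphi_T$, and $\widetilde W\neq 0$ do combine cleanly to give a nowhere-vanishing gradient.
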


From the results of Table \ref{cekk_tab:results}, we conclude that MLPs and neural ODEs show in the non-augmented and the augmented case the same geometric properties with respect to the existence and regularity of critical points. The results are independent of the MLP activation function $\sigma$ and the vector field $f$ of the neural ODE. For non-augmented architectures and degenerate neural ODEs, we can directly infer that they cannot have the universal embedding property\index{Universal Embedding}, as not all function classes $(\mathcal{C}1)^k(\mathcal{X},\mathbb{R})$, $(\mathcal{C}2)^k(\mathcal{X},\mathbb{R})$ and $(\mathcal{C}3)^k(\mathcal{X},\mathbb{R})$ can be represented. With a perturbation result, it follows that non-augmented MLPs and neural ODEs also cannot have the universal approximation property\index{Universal Approximation} \cite{KuehnKuntz2024}. The result of non-universal approximation directly transfers to architectures with multi-dimensional output $q>1$, as universal approximation needs to hold for every single component of the considered network. In the augmented case, our results agree with the existing literature, as established universal approximation theorems for MLPs and neural ODEs use augmented architectures \cite{Hornik1989,  Kidger2022, KuehnKuntz2023, Pinkus1999}. In the case of MLPs with a bottleneck, the results in~\cite{KuehnKuntz2024} show a more detailed classification depending on the location of the bottleneck in the underlying graph. Furthermore, it is discussed why the results in the bottleneck/degenerate case are also comparable. The main reason is that neural ODEs cannot have a bottleneck in the hidden layer, as the dimension of the underlying initial value problem is constant over time. 

\begin{figure}
	\begin{tabular*}{\textwidth}{p{2.2cm}|p{6.0cm}|p{6.0cm}}
		& \hspace{10mm}\textbf{Multilayer Perceptron} & \hspace{17mm} \textbf{Neural ODE} \\[3pt]
		\hline && \\[-5pt]
		\textbf{Non-Augmented}
		&
		Every $\Phi_\theta \in \Xi^k_\textup{N}(\mathcal{X},\mathbb{R})$, $k\geq 1$, is of class $(\mathcal{C}1)^k(\mathcal{X},\mathbb{R})$.
		& 
		Every $\Phi_\theta \in \textup{NODE}^k_\textup{N}(\mathcal{X},\mathbb{R})$, $k\geq 1$, is of class $(\mathcal{C}1)^k(\mathcal{X},\mathbb{R})$. \\[5pt]
		\hline && \\[-5pt]
		\textbf{Augmented}
		&
		Every $\Phi_\theta \in \Xi^k_\textup{A}(\mathcal{X},\mathbb{R})$, $k \geq 2$, is, for all sets of weights apart from a zero set w.r.t.\ the Lebesgue measure in the weight space $\mathbb{W}$, of class $(\mathcal{C}1)^k(\mathcal{X},\mathbb{R})$ or $(\mathcal{C}2)^k(\mathcal{X},\mathbb{R})$. 
		& 
		Every $\Phi_\theta\in \textup{NODE}^k_\textup{A}(\mathcal{X},\mathbb{R})$, $k \geq 2$, is, for all sets of weights apart from a zero set w.r.t.\ the Lebesgue measure in the weight space $\mathbb{V}$, of class $(\mathcal{C}1)^k(\mathcal{X},\mathbb{R})$ or $(\mathcal{C}2)^k(\mathcal{X},\mathbb{R})$. \\[5pt]
		\hline && \\[-5pt]
		\textbf{Bottleneck / Degenerate} 
		&
		The map $\Phi_\theta \in \Xi^k_\textup{B}(\mathcal{X},\mathbb{R})$, $k\geq 2$, can be of all classes $(\mathcal{C}1)^k(\mathcal{X},\mathbb{R})$, $(\mathcal{C}2)^k(\mathcal{X},\mathbb{R})$ or $(\mathcal{C}3)^k(\mathcal{X},\mathbb{R})$. 
		& 
		Every $\Phi_\theta \in \textup{NODE}^k_\textup{D}(\mathcal{X},\mathbb{R})$, \mbox{$k\geq 2$,} is either of class $(\mathcal{C}1)^k(\mathcal{X},\mathbb{R})$ or $(\mathcal{C}3)^k(\mathcal{X},\mathbb{R})$.
	\end{tabular*}
	\vspace{3mm}
	\captionof{table}{Classification of scalar MLPs and scalar neural ODEs of Theorem \ref{cekk_th:classification}.} \label{cekk_tab:results}
\end{figure}

\subsection{Influence of the Memory Capacity on Universal Approximation}
\label{cekk_sec:delay}

In the last Section \ref{cekk_sec:morse}, we have seen that non-augmented neural ODEs cannot have the universal embedding property. If the phase space is sufficiently large, Theorem~\ref{cekk_th:universalembedding} shows that augmented neural ODEs have the universal embedding property. Naturally, the question arises if universal approximation can also be achieved not by increasing the width or the dimension of the phase space, but by adding memory \index{Memory of a Neural Network} to the network architecture. For DenseResNets with update rule~\eqref{cekk_eq:DenseNet}, memory is introduced by inter-layer connections, and for neural DDEs by the delay parameter~$\tau$ of the DDE~\eqref{cekk_eq:NeuralDDE}. \index{Neural Delay Differential Equation (Neural DDE)} In this section, we focus on neural DDEs as in Definition \ref{cekk_def:NDDE} and show, that depending on the memory capacity \index{Memory Capacity of a Neural DDE} $K\tau$, given by the product of the Lipschitz constant $K$ and the delay $\tau$, the class $\text{NDDE}^k_\tau(\mathcal{X},\R^q)$ has or does not have the universal approximation property with respect to the space $C^j(\mathcal{X},\R^q)$.

\begin{theorem}[Memory Dependence in Neural DDEs \cite{KuehnKuntz2025}] \label{cekk_th:memory}
	\index{Universal Approximation} Let $\Phi_\theta \in \textup{NDDE}_{\tau}^k(\mathcal{X},\R^q)$, $\mathcal{X} \subset \R^d$ open, $k\geq 1$, be a neural DDE with delay $\tau\geq0$. Let the underlying vector field $F$ be globally Lipschitz continuous with respect to the second variable with Lipschitz constant $K$. Fix some constants $w, \tilde w \in (0,\infty)$ and $A \geq 0$. Then the following assertions hold:
	\begin{enumerate}[label=(\alph*), font=\normalfont]
		\item If $m = d$, there exists a continuous function $\tau_0((0,\infty),[0,T])$ with $K\tau_0(K)e<1$ for $K\in(0,\infty)$, such that if $\tau\in[0,\tau_0(K))$, the class of neural DDEs $\textup{NDDE}^k_{\tau,\textup{N}}(\mathcal{X},\R^q)$ with $\vert\vert{F(t,0)}\vert\vert_\infty \leq A$ for all $t\in [0,T]$ and weight matrices $W, \widetilde{W}$ fulfilling $\vert\vert W \vert\vert_\infty \leq w$, $\vert\vert\widetilde W\vert\vert_\infty \leq \tilde w$, does not have the universal approximation property with respect to the space $C^j(\mathcal{X},\R^q)$, $j\geq 0$.
		\item Let $\Psi\in C^k_b(\mathcal{X},\R^q)$, $k\geq 1$ with global Lipschitz constant $K_\Psi$. Then if
		\begin{equation*}
			K\tau \geq 2\left(1+\frac{K_\Psi}{w \tilde w}\right),
		\end{equation*}
		and $m = \max\{d,q\}$, the map $\Psi$ can be represented by a neural DDE $\Phi_\theta\in\textup{NDDE}^k_{\tau}(\mathcal{X},\R^q)$ with weight matrices $W, \widetilde{W}$ with $\vert\vert{W}\vert\vert_\infty = w$, $\vert\vert\widetilde W\vert\vert_\infty = \tilde w$, delay $\tau$ and global Lipschitz constant $K$ in the second variable.
	\end{enumerate}
\end{theorem}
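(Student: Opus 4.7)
The plan is to handle (a) and (b) as essentially independent results: (a) is a negative rigidity statement for small $K\tau$, while (b) is an explicit constructive representation at large $K\tau$. Together they identify the memory capacity $K\tau$ as the threshold quantity controlling expressive power. The overall strategy in (a) is to run an ODE-like analysis of the DDE flow in the sub-contractive regime; the strategy in (b) is to directly design the vector field so that the delay provides access to the input.

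For (a), the first step is to run a Picard iteration for the DDE~\eqref{cekk_eq:NeuralDDE} on consecutive delay windows of length $\tau$. Iterated across windows, the Picard operator is contracting on $C([-\tau,T],\R^m)$ once $K\tau e<1$, where the sharp constant $e$ enters through the single-window exponential bound. This yields uniform $C^k$-estimates on the time-$T$ map $v\mapsto h_v(T)$ from constant histories $c_v$, implying that it is a $C^k$-embedding of $\R^m$ into itself. Defining $\tau_0(K)$ as the largest delay for which these estimates remain valid, and checking continuity in $K$, gives the required function. The second step exploits the non-augmented structure $m=\max\{d,q\}$ with full-rank $W,\widetilde W$: the input-output map factors as $\Phi_\theta=\tilde\lambda\circ \mathcal F_T\circ\lambda$ with $\mathcal F_T$ a $C^k$-embedding, putting $\Phi_\theta$ in the same geometric class as non-augmented neural ODEs in Theorem~\ref{cekk_th:classification}. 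The boundedness of $A,w,\tilde w$ together with the uniform contraction estimate then confines the realizable family to a $C^0$-compact set on any bounded domain; one exhibits a concrete target (for instance a scalar function with a strict interior extremum of sufficient amplitude) from which the whole family is uniformly bounded away, ruling out universal approximation.

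For (b), given $\Psi\in C^k_b(\R^d,\R^q)$ of Lipschitz constant $K_\Psi$, I would design $F$ so that the vector field reads the delayed history to recover the input. Because the initial history is the constant $c_{\lambda(x)}$, the value $h(t-\tau)=\lambda(x)$ is available to $F(t,h_t)$ directly for $t\in[0,\tau]$. Choose $F(t,\varphi)$ to depend on $\varphi(-\tau)$ and to integrate, over the available time budget, to the displacement from $\lambda(x)$ to $\widetilde W^{+}(\Psi(x)-\tilde b)$, where $\widetilde W^{+}$ is a right inverse of $\widetilde W$ of norm $\sim 1/\tilde w$ (and the dependence on $x$ enters through a left inverse of $W$ of norm $\sim 1/w$). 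The Lipschitz constant required of such an $F$ scales as $K_\Psi/(w\tilde w\tau)$; the additive $+1$ and factor $2$ in $K\tau\geq 2(1+K_\Psi/(w\tilde w))$ absorb the identity-like part of the displacement and a margin in the schedule of the vector field. A standard mollification step places $F$ in the admissible class $C^{0,k}_b$ while preserving the Lipschitz bound.

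The main obstacle is part (a): upgrading the analytic smallness $K\tau e<1$ into a uniform geometric statement on the entire realizable family $\{\Phi_\theta\}$ and combining it with a quantitative non-approximation bound for at least one test function in $C^j(\R^d,\R^q)$, all while respecting the full list of uniform bounds on $F$, $W$ and $\widetilde W$. The sharp factor $e$ strongly suggests that one must iterate the Picard contraction across delay windows rather than rely on a single-window Gronwall estimate. Part (b) is structurally simpler, but requires care to satisfy all requirements on $F$ simultaneously (time-dependence, global Lipschitz constant $K$, membership in $C^{0,k}_b$) while achieving \emph{exact} equality $\Phi_\theta=\Psi$ after sandwiching the DDE flow between the fixed-norm affine layers $\lambda,\tilde\lambda$.
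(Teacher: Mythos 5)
The proposal identifies the right architecture of the argument -- show the time-$T$ map $v\mapsto h_v(T)$ from constant histories is a $C^k$-embedding when $K\tau$ is small, reduce to the geometric rigidity of non-augmented neural ODEs as in Theorem~\ref{cekk_th:classification}, and then rule out approximation of a target with a non-degenerate critical point; and, for (b), steer the flow by a vector field that reads $\varphi(-\tau)=\lambda(x)$. However, there are genuine gaps that go beyond missing details.

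For (a), the key problem is the mechanism you propose for the threshold $K\tau e<1$. You attribute it to Picard contraction (``the Picard operator is contracting on $C([-\tau,T],\R^m)$ once $K\tau e<1$''). But well-posedness for a globally Lipschitz DDE holds for \emph{every} $\tau\geq 0$: one either applies the method of steps, or a single-shot fixed point argument in an exponentially weighted norm $\|h\|_\gamma=\sup_t e^{-\gamma t}\|h(t)\|$ with any $\gamma>K$, which gives contraction with no smallness restriction on $\tau$ at all. So Picard contraction cannot be the source of the $K\tau e<1$ threshold, and the critical step -- why $v\mapsto h_v(T)$ is an embedding for small $K\tau$ -- is missing. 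Heuristically, one expects this to follow by comparing to the ODE flow ($\tau=0$), whose time-$T$ map is a diffeomorphism, and controlling the defect introduced by the delay in the linearized dynamics $\eta'(t)=D_2F(t,h_t)\eta_t$; this is an entirely different kind of estimate from a well-posedness contraction and it is exactly where the sharp constant $e$ has to be produced. Related to this, the compactness argument you propose (Arzel\`a--Ascoli on a bounded domain plus a target bounded away from the family) is dispensable once the $(\mathcal C1)^k$ membership is established: the degree-theoretic perturbation argument already used in the paper for non-augmented architectures gives a uniform lower bound on the sup-distance to any target with a non-degenerate critical point, without compactness. If the uniform bounds $A,w,\tilde w$ matter in the actual proof, it is more likely in making the embedding estimate uniform over the admissible family than in the perturbation step.

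For (b), the construction is sound in outline, but two points need attention. First, the ``mollification'' step is not needed: with $\hat g\in C^k$ and the evaluation functional $\varphi\mapsto\varphi(-\tau)$ linear and bounded, $F(t,\varphi)=\phi(t)\hat g(\varphi(-\tau))$ is already $C^{0,k}$ in $\varphi$ for any continuous time profile $\phi$, and mollifying $F$ would change the input-output map and destroy exact representation, which is precisely what you must keep. Second, the required displacement from $\lambda(x)$ to a preimage of $\Psi(x)$ under $\tilde\lambda$ necessarily contains a linear (hence unbounded) term in $\varphi(-\tau)$, so the constructed $F$ is globally Lipschitz but not a bounded function; whether this is compatible with the space $C^{0,k}_b$ from Definition~\ref{cekk_def:NDDE} depends on the precise convention (bounded derivatives of orders $\geq 1$ versus $\geq 0$) and must be checked explicitly. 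Finally, the bounds $\|W^+\|\sim 1/w$, $\|\widetilde W^+\|\sim 1/\tilde w$ are not automatic for arbitrary full-rank matrices with $\|W\|_\infty=w$, $\|\widetilde W\|_\infty=\tilde w$; the construction should explicitly choose well-conditioned $W,\widetilde W$ (e.g., scaled coordinate embeddings or projections), which the theorem indeed allows since you get to pick the realizing architecture.
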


From part (a), it follows by deleting arbitrary components of the vector field that for $m \leq d$, neural DDEs cannot have the universal approximation property. The statement from part (b) generalizes to all $m\geq \max\{d,q\}$ by adding additional zero components to the vector field. For $m \geq d+q$, it follows analogously to Theorem~\ref{cekk_th:universalembedding}, that also neural DDEs $\Phi_\theta\in\textup{NDDE}^k_{\tau,\textup{A}}(\mathcal{X},\R^q)$ have the universal embedding property \index{Universal Embedding} with respect to the space $C^k(\mathcal{X},\R^q)$, $\mathcal{X} \subset \R^d$ open \cite{KuehnKuntz2025}. All results obtained for the memory dependence of neural DDEs are visualized in Figure~\ref{cekk_fig:NDDEtheorem}. In the case that the output dimension is not larger than the input dimension, i.e., $d\geq q$, it holds $\max\{d,q\} = d$, such that for $m = d$, both results of Theorem~\ref{cekk_th:memory}~(a) and~(b) apply. In summary, Theorem~\ref{cekk_th:memory} provides a mathematically precise statement regarding the role of memory in network architectures, i.e., how increasing memory enables universal approximation and universal embedding.

\begin{figure}[h]
	\centering
	\vspace{3mm}
	\begin{overpic}[scale = 0.85,,tics=10]
		{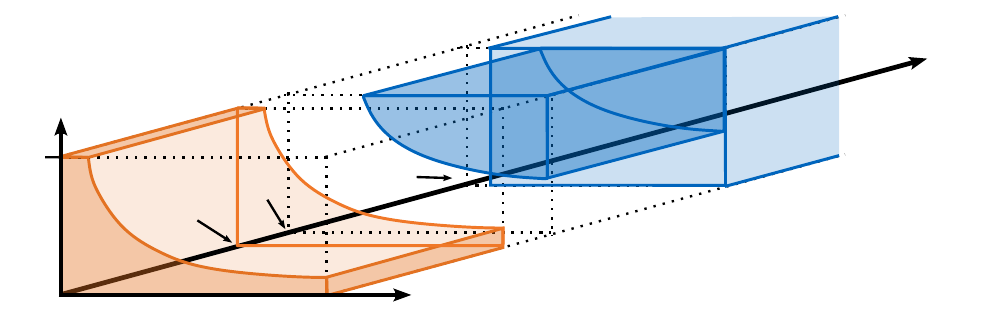}
		\put(42,2){$K$}
		\put(5,22){$\tau$}
		\put(1,16){$T$}
		\put(94,26){$m$}
		\put(17.5,10){$d$}
		\put(20.8,14){$\max\{d,q\}$}
		\put(34.5,14){$d+q$}
	\end{overpic}
	\caption{Joint visualization of the results of Theorem~\ref{cekk_th:universalembedding} and Theorem~\ref{cekk_th:memory} for the memory dependence of neural DDEs in the three-dimensional parameter space $(K,\tau,m)\in[0,\infty)\times[0,T]\times \mathbb{N}$ of Lipschitz constant $K$, delay $\tau$, and dimension $m$ of the vector field of the DDE. Parameter regions, where no universal approximation is possible, are visualized in orange, and parameter regions with the universal embedding property are visualized in blue. The figure is adapted from \cite[Figure 3.4]{KuehnKuntz2025}.}
	\label{cekk_fig:NDDEtheorem}
\end{figure}

%%%%%%%%%%%%%%%%%%%%%%%%%%%%%%%%%%%%%%%%%%%%%%%%%%%%
\section{Optimization Algorithms as Dynamical Systems}\label{cekk_sec:opti}

In the last section we have shown, how dynamical systems can be used to study dynamics on fixed networks. In the following, we will discuss how the process of training a neural network can be interpreted as a dynamical system. In particular, we focus on characterizing the dynamical stability of minima and on the influence of stability on generalization. In Section \ref{cekk_subs:opti} we introduce the optimization tasks and the distinction between overdetermined and overparameterized problems. In Section \ref{cekk_subs:GDStab} we introduce a dynamical system viewpoint for gradient descent and study stability for overdetermined problems. In Section~\ref{cekk_subs:GDOpar}, we generalize this analysis to the overparameterized setting and discuss the edge of stability phenomenon and possible explanations for implicit bias. In Section~\ref{cekk_subs:RDS}, we introduce a random dynamical system framework for stochastic gradient descent and briefly discuss the overdetermined setting. Finally, in Section~\ref{cekk_sec:SGDOver}, we extend our stability analysis for the overparameterized setting to stochastic gradient descent.

\subsection{Optimization Tasks in Machine Learning} \label{cekk_subs:opti}
The goal of training a neural network $\Phi: \R^D \times \R^d \to \R^q,\,(\theta, x) \mapsto \Phi_\theta(x) = \Phi(\theta,x)$ (cf.~\eqref{cekk_eq:input_output_map}) is to find parameters $\theta$ such that $\Phi_\theta \approx \Psi$ in an appropriate sense, where $\Psi$ is the ground-truth function. In practice, the ground-truth function $\Psi$ is not known, and instead a suitable parameter $\theta$ has to be inferred from a set of $N$ training examples $(x_i, y_i)_{i \in {1,\dots, N}}$ with $y_i = \Psi(x_i)$ for every $i$. For this purpose, we introduce a function $\ell: \R^q \times \R^q \to [0, \infty), \, (y',y) \mapsto \ell(y',y)$ that measures how far a prediction~$y'$ is from the ground truth answer $y$. Of course, such a distance function should satisfy $\ell(y',y) = 0$ if and only if $y'=y$. We can then define the loss function\index{Loss Function} $\mathcal L: \mathbb R^D \to [0, \infty)$ by
\begin{equation}\label{cekk_eq:EmpLoss}
	\mathcal L(\theta) \coloneqq \frac{1}{N} \sum_{i = 1}^N \ell(\Phi(\theta, x_i), y_i).
\end{equation}
The value $\mathcal L(\theta)$, which is also called empirical error\index{Empirical Error}, measures how closely the neural network $\Phi_\theta$ approximates the ground truth function $\Psi$ in the training data set $\{x_1, \dots, x_N\}$.
For the regression problem we consider here, a natural choice for $\ell$ is given by $\ell(y',y) \coloneqq \frac{1}{2} \|y'-y\|_2^2$, where $\|\cdot\|_2$ denotes the Euclidean norm. In that case, the loss function $\mathcal L$ is the so-called mean-square error (MSE)\index{Mean-Square Error (MSE)}.

Instead of searching for a parameter $\theta$ for which $\Phi_\theta \approx \Psi$ directly, we seek a $\theta$ for which $\mathcal L(\theta) \approx 0$, implying that $\Phi_\theta(x) \approx \Psi(x)$, at least for all $x \in \{x_1, \dots, x_N\}$.
Ideally, we want to find a global minimum $\theta^*$ of the loss function $\mathcal L$, i.e.
\begin{equation} \label{cekk_eq:optiTask}
	\theta^* \in \operatorname{argmin}_{\theta \in \R^D} \mathcal L(\theta).
\end{equation}
When studying such an optimization task, it is crucial to differentiate whether the problem is {overdetermined} or {overparameterized}. An optimization task of the form~\eqref{cekk_eq:optiTask} is called overdetermined\index{Overdetermined Problem} if $D < qN$ and overparameterized\index{Overparameterized Problem} if $D > qN$. According to the famous \emph{double-descent curve}\index{Double-Descent Curve} \cite[Figure 1]{Belkin19}, both overdetemined and overparameterized models are able to perform well, while the ``middle-ground" $D \approx qN$ tends to perform poorly. The main difference between the two regimes can be seen by asking about the existence of a parameter $\theta$ with $\mathcal L(\theta) = 0$. By \eqref{cekk_eq:EmpLoss}, such a parameter $\theta$ has to interpolate the training data, i.e., it has to satisfy
\begin{equation}\label{cekk_eq:Interpolation}
	\Phi(\theta,x_i) = y_i, \quad \text{for all } i \in [N].
\end{equation}
In the overdetermined case $D<qN$, the number of equations in \eqref{cekk_eq:Interpolation} exceeds the dimension of $\theta$, and generically there is no $\theta \in \mathbb R^D$ with $\mathcal L(\theta) = 0$. Furthermore, the global minima of $\mathcal L$ are generically unique. Consequently, the main question of interest for the training process is whether the algorithm converges towards that global minimum.

In the overparameterized case $D>qN$, on the other hand, the dimension of $\theta$ exceeds the number of equations in \eqref{cekk_eq:Interpolation} and it is possible to have an infinite number of solutions to $\mathcal L(\theta) = 0$. In particular, the following statement follows from Sard's Theorem (see e.g.~\cite{Milnor65}), assuming $D>qN$:\index{Interpolation Solution}
\begin{theorem}[Set of Interpolation Solutions as Embedded Submanifold~\cite{Cooper21}]\label{cekk_theo:Cooper}
	For every continuously differentiable network function $\Phi: \R^D \times \R^d \to \R^q$ and every training data set $((x_1,y_1), \dots, (x_N,y_N)) \in (\R^d \times \R^q)^N$ outside a Lebesgue-null set (which may depend on $\Phi$), the set of interpolation solutions $\mathcal M \coloneqq \{\theta \in \R^D: \mathcal L(\theta) = 0 \}$ is either empty or a $(D-qN)$-dimensional embedded submanifold of~$\R^D$.
\end{theorem}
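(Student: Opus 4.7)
The plan is to reinterpret $\mathcal M$ as a fibre of a data-parameterised map and invoke the parametric transversality theorem (i.e.\ the regular value theorem together with Sard). First I would reduce to the equation $\Phi(\theta, x_i) = y_i$ for all $i$: since $\ell(y',y)=0$ iff $y' = y$, the loss~\eqref{cekk_eq:EmpLoss} vanishes at $\theta$ exactly on this algebraic variety, so for any fixed training data $\mathbf z = ((x_1,y_1),\dots,(x_N,y_N))$ we have $\mathcal M = F_{\mathbf z}^{-1}(0)$ for the evaluation map
\begin{equation*}
F_{\mathbf z}: \R^D \to \R^{qN}, \qquad \theta \mapsto \bigl(\Phi(\theta,x_1)-y_1,\;\dots,\;\Phi(\theta,x_N)-y_N\bigr).
\end{equation*}
By the regular value theorem, if $0$ is a regular value of $F_{\mathbf z}$ then $\mathcal M$ is either empty or an embedded submanifold of $\R^D$ of codimension $qN$, which is the dimension claimed. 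So everything reduces to showing that, for $\mathbf z$ outside a Lebesgue-null set, $0$ is a regular value of $F_{\mathbf z}$.

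To obtain this genericity I would consider the joint map
\begin{equation*}
\tilde F:\R^D \times (\R^d\times\R^q)^N \to \R^{qN}, \qquad (\theta,\mathbf z) \mapsto F_{\mathbf z}(\theta).
\end{equation*}
Differentiating in the $y_i$ slots gives the block $-\mathrm{Id}_{qN}$, so $D\tilde F$ is surjective everywhere; in particular $0$ is a regular value of $\tilde F$. The preimage theorem then yields that $Z := \tilde F^{-1}(0)$ is an embedded submanifold of $\R^D \times (\R^d\times\R^q)^N$ of dimension $D + Nd + Nq - qN = D + Nd$. Next I would apply Sard's theorem to the smooth projection $\pi_2 : Z \to (\R^d\times\R^q)^N$: its set of critical values in $(\R^d\times\R^q)^N$ has Lebesgue measure zero. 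For any regular value $\mathbf z$ of $\pi_2$, the implicit function theorem identifies $\pi_2^{-1}(\mathbf z)$ with $F_{\mathbf z}^{-1}(0) \times \{\mathbf z\}$, and $\mathbf z$ being a regular value of $\pi_2$ is equivalent to $0$ being a regular value of $F_{\mathbf z}$ (the two notions of ``regular'' agree because the $\mathbf z$-derivative is already surjective). Thus for $\mathbf z$ in a full-measure set, $\mathcal M$ is either empty or a $(D - qN)$-dimensional embedded submanifold of $\R^D$, as claimed.

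The main technical obstacle is the regularity required for Sard's theorem applied to $\pi_2$. Recall that Sard needs a $C^k$ map with $k \geq \max\{1, \dim(\text{domain}) - \dim(\text{codomain}) + 1\}$; here this reads $k \geq \max\{1, D - qN + 1\}$, which in the overparameterised regime $D > qN$ strictly exceeds the bare $C^1$ assumption in the statement. I would therefore either strengthen the hypothesis to $\Phi \in C^{D-qN+1}$ (or simply smooth, as is standard in Cooper's setting), or invoke a refined version of Sard (e.g.\ Bates or Figueira) adapted to $C^1$ maps at the price of some Hausdorff-dimension control. A secondary, purely bookkeeping point is to check that the null set depending on $\Phi$ arising from Sard is indeed a null set in $(\R^d\times\R^q)^N$ and not just in some lower-dimensional slice; this is immediate from the product structure of the parameter space, but worth recording explicitly.
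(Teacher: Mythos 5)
The paper does not reproduce Cooper's proof; it only cites \cite{Cooper21} and remarks that the statement follows from Sard's theorem. Your argument is exactly the proof that remark is pointing to, and once you unwind the parametric transversality theorem your $\tilde F / \pi_2$ construction collapses into the direct version: fix the inputs $\mathbf{x}$, apply Sard to the evaluation map $\theta \mapsto (\Phi(\theta,x_1),\dots,\Phi(\theta,x_N))$ to obtain a Lebesgue-null set of exceptional labels $\mathbf{y}\in\R^{qN}$, then use Fubini to pass to a null set of exceptional pairs $(\mathbf{x},\mathbf{y})$ in the full data space $(\R^d\times\R^q)^N$. So this is the same route as Cooper's, only packaged through the joint map $\tilde F$, and neither formulation buys any extra generality here.

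Your regularity caveat is the substantive part of the writeup and you are right to flag it. In the overparameterized regime $D>qN$, the Morse--Sard theorem applied to $\pi_2|_Z$ (equivalently, to $\theta\mapsto(\Phi(\theta,x_i))_i$ for fixed $\mathbf{x}$) requires differentiability of order at least $D-qN+1$, and Whitney's classical $C^1$ example of a function with a connected arc of critical points mapped onto an interval shows that $C^1$ genuinely does not suffice. So the $C^1$ hypothesis in the theorem as quoted in the chapter is an informal weakening; Cooper's original statement assumes smoothness of the network map, and the cleanest reading of the chapter's version is that $\Phi$ should be taken to be at least $C^{D-qN+1}$, exactly as you propose.
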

% It should be mentioned that Theorem \ref{cekk_theo:Cooper} does not state that $\mathcal M$ is non-empty. However, if
If the network function is sufficiently expressive, we expect that interpolating parameters exist, which means that $\mathcal M$ is a \emph{non-empty} $(D-qN)$-dimensional manifold. In that case, every $\theta \in \mathcal M$ is a global minimum of $\mathcal L$, and \eqref{cekk_eq:optiTask} does not define~$\theta^*$ uniquely. Consequently, it is not only of interest to ask whether an optimization algorithm converges to a global minimum, but also to which one. The latter question is of particular importance to understand implicit bias \index{Implicit Bias}. Here, implicit bias refers to the tendency of an optimization algorithm to find global minima $\theta^*$ that generalize\index{Generalization} well, meaning that $\Phi_{\theta^*}(x) \approx \Psi(x)$ even for $x \notin \{x_1, \dots, x_N\}$. 

\begin{figure}
	\centering
	\vspace{3mm}
	\includegraphics[width=\linewidth]{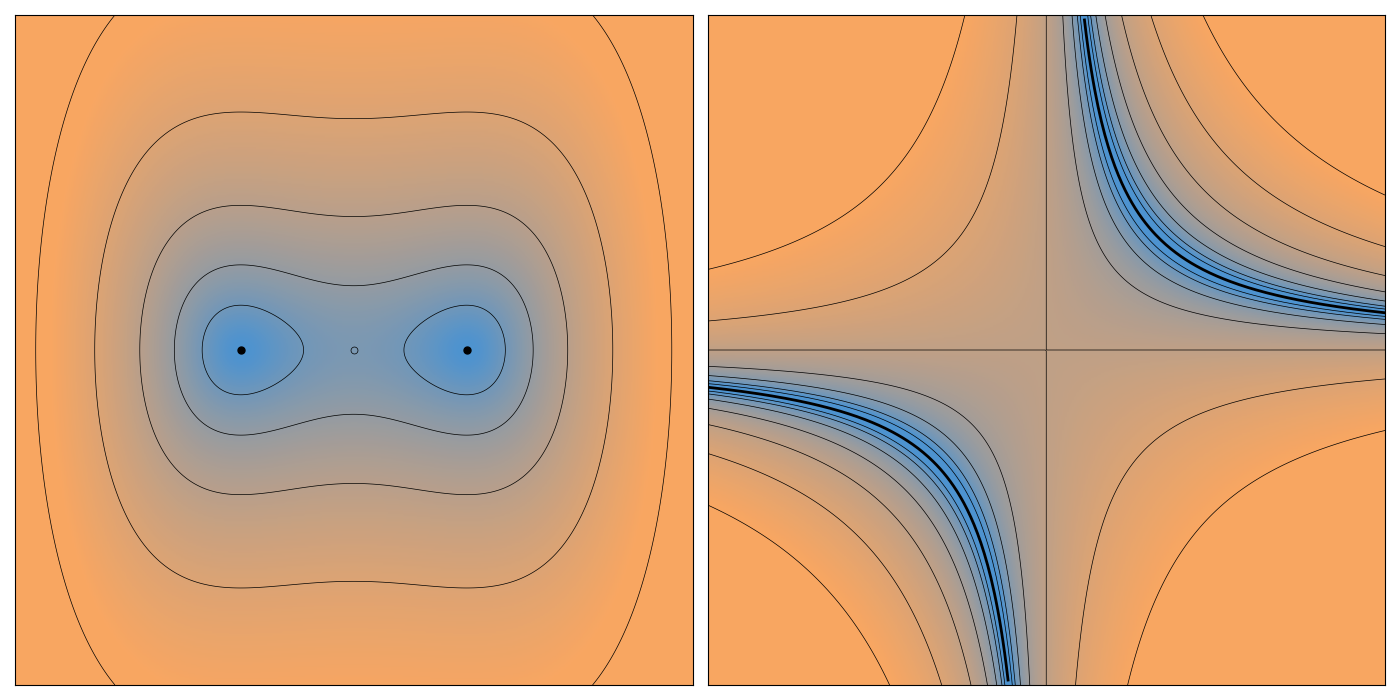}
	\caption{Typical geometries of the loss landscape in the overdetermined (left) and the overparameterized regime (right) for a two dimensional parameter space. The loss $\mathcal L$ is lower in blue regions and higher in orange regions. The thin black lines show level sets of $\mathcal L$. In the left plot, the two black dots indicate local minima and the hollow dot indicates a saddle point. In the right plot, the manifold $\mathcal M$ (cf.~Theorem~\ref{cekk_theo:Cooper}) of interpolation solutions is shown as a thick black line.}
	\label{cekk_fig:losslandscape}
\end{figure}

\subsection{Gradient Descent and Milnor Stability}\label{cekk_subs:GDStab}

The simplest algorithm that can be used to train neural networks is known as gradient descent (GD)\index{Gradient Descent (GD)}. After choosing some initial parameters $\theta_0$ deterministically, e.g. \mbox{$\theta_0=0$}, or randomly, e.g.~$\theta_0 \sim \mathcal N(0,\Sigma)$, the parameters are successively updated according to the recursion
\begin{equation}\label{cekk_eq:GD}
	\theta_{n+1} \coloneqq \varphi(\theta_n) \coloneqq \theta_n - \eta \nabla \mathcal L(\theta_n).
\end{equation}
Here $\eta$ is a positive real number called learning rate\index{Learning Rate} and $\nabla \mathcal L$ denotes the gradient of the loss function~$\mathcal L$. As motivated above, we are interested in the behavior of~$\theta_n$ as $n\to \infty$. The recursion \eqref{cekk_eq:GD} can be interpreted as a discrete time dynamical system\index{Dynamical System!Discrete Time Dynamical System} and it can be easily seen that we have $\theta_n = \varphi^n(\theta_0)$ for all $n\in \mathbb N$. This allows us to take a dynamical viewpoint, i.e., to shift the focus from considering a single trajectory to considering the geometry of the map $\varphi$ across the parameter space.
If $\theta^* \in \mathbb R^D$ is a local minimum of $\mathcal L$ then $\nabla \mathcal L(\theta^*) = 0$ and thus $\theta^*$ is a fixed point of $\varphi$, i.e., we have $\varphi(\theta^*) = \theta^*$. Whether the gradient descent algorithm can actually converge to the local minimum $\theta^*$ depends on its dynamical stability. 

Consider, for now, the overdetermined setting $D<qN$.\index{Overdetermined Problem} We introduce the following definitions of stability for isolated fixed points, i.e., fixed points with a non-empty neighbourhood that does not contain another fixed point.
\begin{definition}[Milnor Stability of an Isolated Fixed Point]\label{cekk_def:Milnor}~
	\begin{enumerate}
		\item[(i)]\index{Dynamical Stability!for Isolated Minima}An isolated fixed point $\theta^* = \varphi(\theta^*)$ is called \emph{Milnor stable} if the set of possible initializations $\theta_0\in \R^D$ for which the sequence of gradient descent iterates $\theta_n$ converges to $\theta^*$ has a positive Lebesgue measure, i.e., if
		$$\operatorname{Vol}\left\{\theta \in \R^D : \lim_{n \to \infty} \varphi^n(\theta) = \theta^*\right\}>0.$$
		\item[(ii)]An isolated fixed point $\theta^* = \varphi(\theta^*)$ is called \emph{locally Milnor stable} if for every open neighborhood $U\subset \R^D$ of $\theta^*$ the set of possible initializations $\theta_0\in U$, for which the sequence of gradient descent iterates $\theta_n$ stays in $U$ forever and converges to $\theta^*$, has a positive Lebesgue measure, i.e., if
		$$\operatorname{Vol}\left\{\theta \in U : \varphi^n(\theta) \in U,~\forall\,n \in \mathbb N \text{ and }\lim_{n \to \infty} \varphi^n(\theta) = \theta^*\right\}>0.$$
	\end{enumerate}
\end{definition}
The name Milnor stability is motivated by the fact that a fixed point $\theta^*$ is Milnor stable if and only if the singleton set $\{\theta^*\}$ is a Milnor attractor \cite{Milnor85}\index{Milnor Attractor}. Milnor stability is an interesting concept for optimization tasks due to the following fact, which is a direct consequence of the definition.
\begin{proposition} \label{cekk_prop:OdetInterpr}
	Let $\theta^*$ be a local minimum.
	For a probability measure $\nu$ on the parameter space $\R^D$, let $\theta_0$ be a $\R^D$-valued random variable with law $\nu$ and let $\theta^n = \varphi^n(\theta_0)$ be the iterations of gradient descent. Suppose $\nu$ is equivalent to the Lebesgue measure, i.e.,
	\begin{equation}\label{cekk_def:equi}
		\nu(A) = 0 \Leftrightarrow \operatorname{Vol}(A) = 0, \text{ for each Borel set $A\subset \R^D$}.
	\end{equation}
	Then $\nu(\lim_{n \to \infty} \theta^n =\theta^*)>0$, i.e., $\theta^*$ is reached with positive probability, if and only if $\theta^*$ is Milnor stable.
\end{proposition}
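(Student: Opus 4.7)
The plan is to unwind both sides directly into a statement about a single set $A \subset \R^D$ and then invoke the equivalence hypothesis on $\nu$. Concretely, I would define the \emph{basin of attraction}
\begin{equation*}
A \coloneqq \left\{\theta \in \R^D : \lim_{n \to \infty} \varphi^n(\theta) = \theta^*\right\}
\end{equation*}
and observe that, by Definition~\ref{cekk_def:Milnor}(i), the fixed point $\theta^*$ is Milnor stable if and only if $\operatorname{Vol}(A) > 0$. On the other hand, since $\theta^n = \varphi^n(\theta_0)$ and $\theta_0$ has law $\nu$, the pushforward identity gives $\nu(\lim_{n \to \infty} \theta^n = \theta^*) = \P(\theta_0 \in A) = \nu(A)$. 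Hence the proposition reduces to the claim that $\nu(A) > 0 \iff \operatorname{Vol}(A) > 0$, which is precisely the content of the equivalence condition~\eqref{cekk_def:equi}.

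The only genuine technical point is to confirm that $A$ is a Borel set, so that both $\nu(A)$ and $\operatorname{Vol}(A)$ are defined and the hypothesis~\eqref{cekk_def:equi} can be applied to it. For this, I would use that $\mathcal{L} \in C^1$ (implicit in the definition of gradient descent), so the update map $\varphi(\theta) = \theta - \eta \nabla \mathcal{L}(\theta)$ is continuous, and hence so is each iterate $\varphi^n$. Writing convergence to $\theta^*$ in the standard $\varepsilon$-$N$ way yields
\begin{equation*}
A = \bigcap_{k=1}^{\infty} \bigcup_{N=1}^{\infty} \bigcap_{n=N}^{\infty} \bigl\{\theta \in \R^D : \|\varphi^n(\theta) - \theta^*\| < 1/k\bigr\},
\end{equation*}
which exhibits $A$ as a countable intersection/union of open sets and therefore Borel.

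With measurability in hand, the two directions of the equivalence are immediate. If $\theta^*$ is Milnor stable, then $\operatorname{Vol}(A) > 0$, hence by~\eqref{cekk_def:equi} $\nu(A) > 0$, i.e., $\nu(\lim_n \theta^n = \theta^*) > 0$. Conversely, if $\nu(\lim_n \theta^n = \theta^*) = \nu(A) > 0$, then~\eqref{cekk_def:equi} forces $\operatorname{Vol}(A) > 0$, so $\theta^*$ is Milnor stable. I do not expect a substantive obstacle here; the statement is essentially a rephrasing of the definition once one has noted that equivalence of measures preserves the dichotomy ``null versus positive measure'' for every Borel set, and that the role of $\theta^*$ being a local minimum is only to guarantee $\varphi(\theta^*) = \theta^*$ (via $\nabla \mathcal{L}(\theta^*) = 0$) so that the notion of Milnor stability applies.
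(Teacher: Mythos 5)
Your proof is correct and is exactly the argument the paper has in mind: the paper states this proposition as ``a direct consequence of the definition'' without spelling out the details, and your expansion---identifying the basin of attraction $A$, reducing both sides to the dichotomy $\operatorname{Vol}(A)>0$ versus $\nu(A)>0$, and invoking the equivalence of $\nu$ with Lebesgue measure---is the intended unwinding. Your additional care in checking that $A$ is Borel (so that \eqref{cekk_def:equi} can legitimately be applied to it) is a point the paper leaves implicit, and is a worthwhile completion.
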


Note that the notion of local Milnor stability is stronger than the notion of Milnor stability. Nevertheless, if the map $\varphi$ is non-singular, meaning that the preimages of Lebesgue-null sets are Lebesgue-null sets, the two definitions are equivalent.

\begin{proposition}\label{cekk_prop:LocGlobEqui}
	Let $\theta^*$ be a local minimum of $\mathcal L$ and suppose that the map $\varphi$ is non-singular. Then $\theta^*$ is locally Milnor stable if and only if it is Milnor stable.
\end{proposition}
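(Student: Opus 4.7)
The forward direction is almost tautological: if $\theta^\ast$ is locally Milnor stable, then fixing \emph{any} open neighborhood $U$ of $\theta^\ast$ (say, the whole of $\R^D$, or a bounded one — either works) gives a set of initial conditions converging to $\theta^\ast$ of positive Lebesgue measure, and such a set is contained in $\{\theta\in\R^D:\lim_n\varphi^n(\theta)=\theta^\ast\}$. Hence $\theta^\ast$ is Milnor stable.

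For the nontrivial direction, suppose $\theta^\ast$ is Milnor stable and let $U$ be an arbitrary open neighborhood of $\theta^\ast$. Set
$$B \coloneqq \{\theta\in\R^D : \lim_{n\to\infty}\varphi^n(\theta)=\theta^\ast\}, \qquad B_N \coloneqq \{\theta\in B : \varphi^n(\theta)\in U \text{ for all } n\geq N\}.$$
The key observation is that for every $\theta\in B$, convergence of $\varphi^n(\theta)$ to $\theta^\ast\in U$ forces $\varphi^n(\theta)\in U$ for all sufficiently large $n$. Thus $B=\bigcup_{N\in\mathbb{N}}B_N$, and since $\operatorname{Vol}(B)>0$ by hypothesis, countable subadditivity yields some $N^\ast$ with $\operatorname{Vol}(B_{N^\ast})>0$.

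Now push this set forward by $\varphi^{N^\ast}$. The inclusion $B_{N^\ast}\subseteq \varphi^{-N^\ast}(\varphi^{N^\ast}(B_{N^\ast}))$ combined with the non-singularity of $\varphi$ (iterated $N^\ast$ times, which gives non-singularity of $\varphi^{N^\ast}$) implies $\operatorname{Vol}(\varphi^{N^\ast}(B_{N^\ast}))>0$; otherwise its preimage would also be a Lebesgue-null set, contradicting $\operatorname{Vol}(B_{N^\ast})>0$. For any $\theta'=\varphi^{N^\ast}(\theta)$ with $\theta\in B_{N^\ast}$, one checks directly from the definition of $B_{N^\ast}$ that $\varphi^n(\theta')=\varphi^{n+N^\ast}(\theta)\in U$ for every $n\geq 0$ and that $\lim_n \varphi^n(\theta')=\theta^\ast$. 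Hence $\varphi^{N^\ast}(B_{N^\ast})$ is contained in the set appearing in Definition~\ref{cekk_def:Milnor}(ii), which therefore has positive Lebesgue measure. Since $U$ was arbitrary, $\theta^\ast$ is locally Milnor stable.

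The only place where non-singularity is really used is the pushforward step: without it, one could construct maps that project a positive-measure trapping set onto a measure-zero piece of $U$, and the claim can genuinely fail. The rest of the argument — the partition into the $B_N$, the countable additivity step, and the shift-of-orbit observation — is soft and will go through without further hypotheses. I therefore expect the only real obstacle to be stating the non-singularity step cleanly (namely that $\varphi$ non-singular implies $\varphi^{N^\ast}$ non-singular), which follows immediately by induction since a composition of non-singular maps is non-singular.
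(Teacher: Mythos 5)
Your proof is correct and is essentially the paper's argument run directly rather than by contrapositive: once you unfold definitions, $B_m = \varphi^{-m}(N)$ for the paper's trapped-convergent set $N$ (for the same choice of $U$), so your decomposition $B = \bigcup_m B_m$ and the paper's $B = \bigcup_m \varphi^{-m}(N)$ coincide, and both invoke non-singularity for exactly the same purpose. The one place your write-up is slightly more delicate is the pushforward step: asserting $\operatorname{Vol}(\varphi^{N^*}(B_{N^*})) > 0$ presupposes that the forward image of a Borel set under $\varphi^{N^*}$ is Lebesgue measurable (true, since it is analytic, but it is an extra measure-theoretic fact), whereas the paper stays entirely on the preimage side, where non-singularity applies with no further comment. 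You can remove the subtlety without changing the structure of your argument by noting directly that $B_{N^*} \subseteq \varphi^{-N^*}(T_U)$, where $T_U$ is the Borel trapped-convergent set of Definition~\ref{cekk_def:Milnor}(ii); non-singularity then yields $\operatorname{Vol}(T_U) > 0$ immediately, without ever forming a forward image.
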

\begin{proof}
	It follows directly from Definition \ref{cekk_def:Milnor} that every locally Milnor stable point is Milnor stable. It remains to show the converse. Suppose that $\varphi$ is non-singular and that $\theta^*$ is not locally Milnor stable and let $U\subset \R^D$ be a neighborhood of $\theta^*$ for which the set
	$$N\coloneqq \left\{\theta \in U : \varphi^n(\theta) \in U,~\forall\,n \in \mathbb N, \text{ and }\lim_{n \to \infty} \varphi^n(\theta) = \theta^*\right\}$$
	is a Lebesgue null set. By repeated application of the non-singularity of $\varphi$ we get that
	$$\operatorname{Vol}\left(\varphi^{-m}(N)\right) = 0, ~\forall\, m \in \mathbb N.$$
	By the definition of convergence, we have
	$$\left\{\theta \in \R^D : \lim_{n \to \infty} \varphi^n(\theta) = \theta^*\right\} = \bigcup_{m = 1}^\infty \varphi^{-m}(N)$$
	and thus 
	$$\operatorname{Vol}\left\{\theta \in \R^D : \lim_{n \to \infty} \varphi^n(\theta) = \theta^*\right\}=0$$
	implying that $\theta^*$ is not Milnor stable.
\end{proof}
It has been shown in \cite[Theorem 12]{Cruaciun24} that the gradient descent update map $\varphi$ is, indeed, generically non-singular, making the two definitions virtually equivalent for practical purposes. Although local Milnor stability is strictly weaker than the classical notion of asymptotic stability, it, and more importantly, the lack of it, can be shown using the same techniques.

Let $\theta^*$ be a strict local minimum, i.e., the Hessian $\operatorname{Hess} \mathcal L(\theta^*)$ is positive definite. In particular, $\theta^*$ is an isolated local minimum whose stability is determined by the linearization of $\varphi$ around $\theta^*$. 
\begin{theorem}[Spectral Stability and Milnor Stability]
	For a strict local minimum~$\theta^*$, we let $\rho(\mathrm D \varphi(\theta^*))$ denote the spectral radius of the Jacobian of $\varphi$ in $\theta^*$. Then $\theta^*$ is locally Milnor stable if $\rho(\mathrm D \varphi(\theta^*))< 1$, and only if $\rho(\mathrm D \varphi(\theta^*))\leq 1$.
\end{theorem}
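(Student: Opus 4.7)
The plan is to exploit the fact that $\varphi(\theta) = \theta - \eta \nabla \mathcal{L}(\theta)$ has the linearization $\mathrm{D}\varphi(\theta^*) = I_D - \eta \operatorname{Hess}\mathcal{L}(\theta^*)$ at a critical point. Since $\theta^*$ is a strict local minimum, the Hessian $H \coloneqq \operatorname{Hess}\mathcal{L}(\theta^*)$ is symmetric and positive definite, so $\mathrm{D}\varphi(\theta^*)$ is symmetric with real eigenvalues $\mu_i = 1 - \eta \lambda_i$, where $\lambda_1, \dots, \lambda_D > 0$ are the eigenvalues of $H$. In particular, no $\mu_i$ equals $1$, and $\rho(\mathrm{D}\varphi(\theta^*)) = \max_i |1 - \eta \lambda_i|$. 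Because all eigenvalues are real and at most $1$, the condition $\rho > 1$ is equivalent to the existence of some index $i$ with $1 - \eta \lambda_i < -1$.

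For the sufficient direction, I would assume $\rho(\mathrm{D}\varphi(\theta^*)) < 1$ and invoke the standard linearization result from discrete dynamical systems. Choose an adapted Lyapunov norm $\|\cdot\|_*$ and a constant $r \in (\rho, 1)$ for which $\|\mathrm{D}\varphi(\theta^*)\|_* \leq r$. Controlling the Taylor remainder via continuity of $\mathrm{D}\varphi$ yields, for any $r' \in (r,1)$, an open $\|\cdot\|_*$-ball $U$ around $\theta^*$ on which $\|\varphi(\theta) - \theta^*\|_* \leq r' \|\theta - \theta^*\|_*$. This makes $U$ forward-invariant with every orbit in $U$ converging geometrically to $\theta^*$. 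Since such balls exist inside every prescribed neighborhood of $\theta^*$ and have positive Lebesgue measure, local Milnor stability follows directly from Definition~\ref{cekk_def:Milnor}(ii).

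For the necessary direction I argue by contraposition and assume $\rho(\mathrm{D}\varphi(\theta^*)) > 1$, so that there is a nontrivial unstable subspace $E^u \subset \R^D$ spanned by the eigenvectors with $|\mu_i| > 1$. Let $E^{cs}$ be its orthogonal complement, spanned by the eigenvectors with $|\mu_i| \leq 1$. The key tool is the center-stable manifold theorem for $C^2$ maps at a (possibly non-hyperbolic) fixed point: there exist a neighborhood $U$ of $\theta^*$ and a $C^1$-embedded submanifold $W^{cs}_{\textup{loc}}(\theta^*) \subset U$, tangent to $E^{cs}$ at $\theta^*$ and of dimension $D - \dim E^u \leq D - 1$, such that every trajectory remaining in $U$ for all forward iterates is contained in $W^{cs}_{\textup{loc}}(\theta^*)$. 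Consequently,
\begin{equation*}
\left\{\theta \in U : \varphi^n(\theta) \in U\ \forall n \in \N,\ \lim_{n \to \infty} \varphi^n(\theta) = \theta^*\right\} \subset W^{cs}_{\textup{loc}}(\theta^*),
\end{equation*}
and the right-hand side is a proper $C^1$ submanifold of $\R^D$, hence a Lebesgue null set. Therefore $\theta^*$ is not locally Milnor stable.

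The main obstacle is the necessary direction, specifically the application of invariant manifold theory when the linearization may carry eigenvalues on the unit circle. For gradient descent this occurs precisely when $\eta \lambda_i = 2$ for some Hessian eigenvalue, so the non-hyperbolic case cannot be brushed aside and a naive appeal to Hartman--Grobman is unavailable. One therefore needs a version of the center-stable manifold theorem for maps together with its characterization of $W^{cs}_{\textup{loc}}(\theta^*)$ as the trapping set of forward orbits. The sufficient direction and the preliminary spectral computation are, by contrast, essentially textbook.
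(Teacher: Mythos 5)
Your argument is correct and follows essentially the same route as the paper's one-line sketch: the sufficiency direction is the standard consequence of $\rho(\mathrm D\varphi(\theta^*))<1$ (you use an adapted Lyapunov-norm contraction, the paper cites the stable manifold theorem; both are equivalent and textbook), while the necessity direction rests on the center-stable/center-unstable manifold theorem from the Hadamard--Perron machinery (Katok--Hasselblatt, Section 6.2), whose trapping characterization confines any forward orbit remaining near $\theta^*$ to a $C^1$ submanifold of codimension $\geq 1$ and hence to a Lebesgue-null set. You also correctly flag the key technicality the paper leaves implicit: since eigenvalues $\mu_j=-1$ can coexist with $|\mu_i|>1$, the argument genuinely needs the center-stable manifold rather than a purely hyperbolic splitting.
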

\begin{proof}[Sketch of proof]
	In the case $\rho(\mathrm D \varphi(\theta^*))< 1$, the local stability follows from the stable manifold theorem \cite[Section 6.2]{Katok1995}. In the case $\rho(\mathrm D \varphi(\theta^*))> 1$, the lack of local stability follows from the center-unstable manifold theorem \cite[Section 6.2]{Katok1995}.
\end{proof}

Since $\mathrm D \varphi(\theta^*) = \mathds 1_D - \eta \operatorname{Hess} \mathcal L(\theta^*)$ (cf.~\eqref{cekk_eq:GD}), with $\operatorname{Hess} \mathcal L (\theta^*)$ positive definite, the spectral radius of $\mathrm D \varphi(\theta^*)$ is less than 1 if and only if 
\begin{equation}\label{cekk_ineq:StabCond}
	2> \eta\rho(\operatorname{Hess} \mathcal L(\theta^*)) = \eta \|\operatorname{Hess} \mathcal L(\theta^*)\|,
\end{equation}
where the equality uses the fact that the Hessian is symmetric and the matrix norm on the right-hand side is the operator norm derived from the Euclidean norm on $\mathbb R^D$.

\begin{remark}
	The linear approximation 
	$$\varphi(x) = x^* + \mathrm D \varphi(x^*) (x-x^*) + o (|x-x^*|)$$
	of $\varphi$ around a local minimum $\theta^*$ is the dynamical analogue of the quadratic approximation\index{Quadratic Approximation}
	$$\mathcal L (\theta) = \mathcal L(\theta^*) + \langle\theta-\theta^*, \operatorname{Hess} \mathcal L (\theta^*) (\theta-\theta^*)\rangle + o\left(|\theta-\theta^*|^2\right)$$
	of the loss function $\mathcal L$ which is more commonly used to study the stability of local minima. The stability condition \eqref{cekk_ineq:StabCond} can be derived equally quickly by considering the quadratic approximation of the loss function, in which case the convergence result is commonly referred to as the Descent Lemma\index{Descent Lemma} \cite[Proposition 1.2.3]{Bertsekas97}. We advocate for the dynamical viewpoint since it is more suitable for extending the following stability analysis to stochastic optimization algorithms (cf.~Section \ref{cekk_sec:SGDOver} below).
\end{remark}

\subsection{Stability of Gradient Descent in the Overparameterized Setting}\textbf{\label{cekk_subs:GDOpar}}\index{Overparameterized Problem}
Shifting our attention to the overparameterized setting $D>qN$, strict local minima no longer exist. Indeed, it can be easily seen that the Hessian of the loss function~$\mathcal L$ has rank at most $qN$ and will thus always have a non-trivial kernel if $D>qN.$ Instead, as a standing assumption, we assume that the set of interpolation parameters $\mathcal M \coloneqq \{\theta \in \R^D: \mathcal L(\theta) = 0 \}$ is non-empty and forms a $(D-qN)$-dimensional embedded submanifold of $\R^D$ (cf.~Theorem \ref{cekk_theo:Cooper}). Although local minima might still exist, we will focus solely on these global minimizers in the overparameterized setting. As before, every $\theta^* \in \mathcal M$ is a fixed point of $\varphi$. Unlike the overdetermined setting, we should no longer expect to be able to show that $\mathbb P(\theta_n \to \theta^*) > 0$ for any fixed $\theta^* \in \mathcal M$ since $\mathcal M$ is uncountable. Instead, we seek to characterize for which minimizers $\theta^*$ the algorithm has a positive probability of converging to a minimum close to $\theta^*$. This leads us to adapting the notion of stability.
\begin{definition}[Stability of GD in the Overparameterized Setting]~\label{cekk_def:StabGDOpar}
	\begin{enumerate}
		\item[(i)]\index{Dynamical Stability!for Overparameterized Problems} A global minimum $\theta^*\in \mathcal M$ is called \emph{stable} if for every neighborhood $V\subset \mathcal M$ of $\theta^*$ which is open in the submanifold topology of $\mathcal M$, the set of possible initializations $\theta_0$ for which gradient descent converges to some global minimum $\theta' \in V$ has positive Lebesgue measure, i.e.
		$$\operatorname{Vol}\left\{\theta \in \R^D :\exists\, \theta' \in V, \text{ s.t. } \lim_{n \to \infty} \varphi^n(\theta) = \theta'\right\}>0.$$
		\item[(ii)] A global minimum $\theta^*\in \mathcal M$ is called \emph{locally stable} if there exists an open neighborhood $U$ of $\theta^*$, such that for every neighborhood $V\subset \mathcal M$ of $\theta^*$ which is open in the manifold topology of $\mathcal M$, the set of possible initializations $\theta_0$ for which gradient descent stays in $U$ forever and converges to some global minimum $\theta' \in V$ has positive Lebesgue measure, i.e.
		$$\operatorname{Vol}\left\{\theta \in \R^D :\forall\, n \in \mathbb N, ~\varphi^n(\theta) \in U \text{ and }\exists\, \theta' \in V, \text{ s.t. } \lim_{n \to \infty} \varphi^n(\theta) = \theta'\right\}>0.$$
	\end{enumerate}
\end{definition}
Analogously to Proposition \ref{cekk_prop:OdetInterpr}, this notion of stability has the following interpretation, which, again, follows directly from the definition.
\begin{proposition} \label{cekk_prop:OparInterpr}
	Let $\theta^* \in \mathcal M$ be a global minimum.
	For a probability measure $\nu$ on the parameter space $\R^D$, let $\theta_0$ be a $\R^D$-valued random variable with law $\nu$ and let $\theta^n = \varphi^n(\theta_0)$ be the iterations of gradient descent. We define a $(\mathcal M \cup \{\emptyset\})$-valued random variable $\theta_{\lim}$ by
	\begin{equation}\label{cekk_eq:thetaLim}
		\theta_{\lim} \coloneqq \begin{cases} \lim_{n \to \infty} \theta_n,&\text{if }\exists\, \theta \in \mathcal M, ~\lim_{n \to \infty} \theta_n = \theta,\\
			\emptyset,& \text{otherwise}.
		\end{cases}
	\end{equation}
	Here $\emptyset$ should be seen as an exception state accounting for the possibility that gradient descent does not converge to $\mathcal M$.
	Suppose $\nu$ is equivalent to the Lebesgue measure (cf.~\eqref{cekk_def:equi}) and define\index{Support!of a Random Variable}
	$$\operatorname{supp}(\theta_{\lim}) = \operatorname{supp}(\operatorname{law}(\theta_{\lim})) = \{\theta \in \mathcal M: \forall\, U \in \mathcal U_{\theta, \mathcal M},~ \nu(\{\theta_0 : \theta_{\lim} \in U\}) > 0\},$$
	where $\mathcal U_{\theta, \mathcal M}$ denotes the family of neighborhoods of $\theta$ which are open in the manifold topology of $\mathcal M$.
	
	Then $\theta^* \in \operatorname{supp}(\theta_{\lim})$ if and only if $\theta^*$ is stable in the sense of Definition \ref{cekk_def:StabGDOpar}(i).
\end{proposition}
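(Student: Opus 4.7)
The plan is to verify the equivalence by direct unpacking of definitions, using that $\nu$ and Lebesgue measure agree on null sets. The key observation is that the set appearing in Definition \ref{cekk_def:StabGDOpar}(i) is literally the preimage $\{\theta_0 \in \R^D : \theta_{\lim}(\theta_0) \in V\}$ that shows up in the support characterization, so once this set identity is established, the statement reduces to translating between the measures $\nu$ and $\operatorname{Vol}$.

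First I would fix an arbitrary neighborhood $V \subset \mathcal M$ of $\theta^*$ that is open in the manifold topology of $\mathcal M$. Unpacking the definition of $\theta_{\lim}$ in \eqref{cekk_eq:thetaLim}, the event $\{\theta_{\lim}(\theta_0) \in V\}$ occurs precisely when $\lim_{n\to\infty}\varphi^n(\theta_0)$ exists in $\mathcal M$ and lies in $V$, i.e.\ when there exists $\theta' \in V$ with $\lim_{n\to\infty}\varphi^n(\theta_0) = \theta'$. The exception state $\emptyset$ is automatically excluded since $V \subset \mathcal M$. Consequently
\[
\{\theta_0 \in \R^D : \theta_{\lim}(\theta_0) \in V\} \;=\; \{\theta \in \R^D : \exists\, \theta' \in V,\ \lim_{n\to\infty}\varphi^n(\theta) = \theta'\},
\]
which is exactly the set whose Lebesgue measure appears in Definition \ref{cekk_def:StabGDOpar}(i).

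Next I would invoke the equivalence of $\nu$ with Lebesgue measure from \eqref{cekk_def:equi} to obtain
\[
\nu(\{\theta_0 : \theta_{\lim} \in V\}) > 0 \quad \Longleftrightarrow \quad \operatorname{Vol}(\{\theta_0 : \theta_{\lim} \in V\}) > 0.
\]
Since in both definitions the neighborhood $V$ (respectively $U$) ranges over the same family $\mathcal U_{\theta^*, \mathcal M}$ of $\mathcal M$-open neighborhoods of $\theta^*$, the condition ``$\theta^* \in \operatorname{supp}(\theta_{\lim})$'' and the condition ``$\theta^*$ is stable'' become termwise equivalent quantified statements, yielding both directions of the equivalence simultaneously.

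There is no substantial obstacle here; the proposition is essentially a bookkeeping statement ensuring that the probabilistic notion of lying in the support of the asymptotic law coincides with the geometric stability condition. The only subtlety worth checking is that $\{\theta_0 : \theta_{\lim} \in V\}$ is Borel measurable so that $\nu$ and $\operatorname{Vol}$ may be compared on it; this holds because $\theta_{\lim}$ is constructed as a pointwise limit of the continuous iterates $\varphi^n$ on the measurable set where the limit exists and lies in $\mathcal M$, and $V$ is Borel in $\R^D$ as an open subset of an embedded submanifold.
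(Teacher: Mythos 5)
Your proof is correct and matches the paper's approach: the paper offers no explicit argument, stating only that the proposition ``follows directly from the definition,'' and your proposal is a careful unpacking of exactly that. The set identity $\{\theta_0 : \theta_{\lim}(\theta_0) \in V\} = \{\theta \in \R^D : \exists\,\theta' \in V,\ \lim_{n\to\infty}\varphi^n(\theta) = \theta'\}$ is the crux, after which the equivalence of $\nu$ and $\operatorname{Vol}$ on null sets closes the argument; the remark on measurability (each $\varphi^n$ continuous, $\mathcal M = \mathcal L^{-1}(\{0\})$ closed hence Borel, $V$ Borel as a relatively open subset of $\mathcal M$) is a welcome, if minor, addition.
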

If the map $\varphi$ is non-singular, it can be shown that the notion of stability in Definition \ref{cekk_def:StabGDOpar}(i) and the notion of local stability in Definition \ref{cekk_def:StabGDOpar}(ii) are equivalent, analogously to the proof of Proposition~\ref{cekk_prop:LocGlobEqui}. Again, we aim to characterize the minimizers that are locally stable.

For a global minimum $\theta^* \in \mathcal M$, let $\mathcal T(\theta^*)$ denote the tangent space and $\mathcal N(\theta^*) = \mathcal T(\theta^*)^\bot$ denote the normal space at $\theta^*$ to the manifold $\mathcal M$. It can be easily checked that 
\begin{align*}
	\mathcal T(\theta^*) &= \operatorname{ker} (\operatorname{Hess} \mathcal L (\theta^*) )&&\text{and}&\mathcal N(\theta^*) &= \operatorname{im} (\operatorname{Hess} \mathcal L (\theta^*)).
\end{align*}
Consequently, the Jacobian $\mathrm D \varphi(\theta^*) = \mathds 1_D - \eta \operatorname{Hess} \mathcal L (\theta^*)$ respects the splitting $\R^D = \mathcal T(\theta^*) \otimes \mathcal N(\theta^*)$ with $\mathrm D \varphi(\theta^*) \vartheta = \vartheta$ for every $\vartheta \in \mathcal T(\theta^*)$. In particular, $1$ is an eigenvalue of $\mathrm D \varphi(\theta^*)$ with geometric multiplicity at least $D-qN$. Thus, the stability condition $\rho(\mathrm D \varphi(\theta^*)) < 1$ from the previous section is never satisfied, and instead we should consider the restriction of $\mathrm D \varphi(\theta^*)$ to the normal space $\mathcal N(\theta^*)$, which defines a linear automorphism $\mathrm D \varphi(\theta^*)|_{\mathcal N(\theta^*)} : \mathcal N(\theta^*) \to \mathcal N(\theta^*)$. This linear map determines the transverse stability of $\mathcal M$ in $\theta^*$.\index{Dynamical Stability!Transverse Stability}
\begin{theorem}[Spectral Stability and Local Stability of Global Minima]
	A global minimum $\theta^*$ is locally stable in the sense of Definition \ref{cekk_def:StabGDOpar}(ii) if $\rho(\mathrm D \varphi(\theta^*)|_{\mathcal N(\theta^*)})< 1$, and only if $\rho(\mathrm D \varphi(\theta^*)|_{\mathcal N(\theta^*)})\leq 1$.
\end{theorem}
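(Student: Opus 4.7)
The plan is to treat $\mathcal M$ as a normally contracting invariant manifold of fixed points for $\varphi$ near $\theta^*$ and to read off local stability from the spectrum of the normal linearization via the center-stable and center-unstable manifold theorems. First I would record the structural fact that, since every $\theta \in \mathcal M$ is a fixed point of $\varphi$, the tangent space $\mathcal T(\theta^*)$ is contained in $\ker(\mathrm D\varphi(\theta^*) - \mathds 1_D) = \ker \operatorname{Hess}\mathcal L(\theta^*) = \mathcal T(\theta^*)$, so this inclusion is an equality. The $\mathrm D\varphi(\theta^*)$-invariant splitting $\R^D = \mathcal T(\theta^*) \oplus \mathcal N(\theta^*)$ therefore decomposes $\mathrm D\varphi(\theta^*)$ into the identity on $\mathcal T(\theta^*)$ and the endomorphism $\mathrm D\varphi(\theta^*)|_{\mathcal N(\theta^*)}$, and $\mathcal M$ locally serves as a smooth invariant center manifold of $\varphi$ through $\theta^*$.

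For the sufficient direction, assume $\rho(\mathrm D\varphi(\theta^*)|_{\mathcal N(\theta^*)}) < 1$. Then $\mathcal M$ is normally contracting at $\theta^*$, and I would invoke the local stable foliation of a normally hyperbolic invariant manifold (Hirsch-Pugh-Shub, or equivalently the center-stable manifold theorem with $\mathcal M$ as the center base; cf.~\cite[Section 6.2]{Katok1995}). This produces an open neighborhood $U \subset \R^D$ of $\theta^*$ together with a continuous projection $\pi: U \to \mathcal M$ such that $\varphi^n(U) \subset U$ and $\varphi^n(\theta_0) \to \pi(\theta_0) \in \mathcal M$ exponentially fast for every $\theta_0 \in U$. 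Given a prescribed $V \subset \mathcal M$ open around $\theta^*$, I shrink $U$ by continuity of $\pi$ so that $\pi(U) \subset V$. Since $\operatorname{Vol}(U) > 0$, Definition~\ref{cekk_def:StabGDOpar}(ii) is satisfied.

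For the necessary direction, I argue by contrapositive: if $\rho(\mathrm D\varphi(\theta^*)|_{\mathcal N(\theta^*)}) > 1$, then $\mathrm D\varphi(\theta^*)$ has at least one eigenvalue strictly outside the closed unit disk. The local center-unstable manifold theorem \cite[Section 6.2]{Katok1995} then yields, on some neighborhood $U_0$ of $\theta^*$, a $C^1$ local center-stable manifold $W^{\mathrm{cs}}_{\mathrm{loc}}$ of codimension equal to the number of eigenvalues of modulus $>1$, hence of codimension at least $1$. Any orbit that stays in $U_0$ for all $n \geq 0$ must lie in $W^{\mathrm{cs}}_{\mathrm{loc}}$, and a positive-codimension $C^1$ submanifold of $\R^D$ has Lebesgue measure zero. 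Consequently, for every open $U \subset U_0$ the set in Definition~\ref{cekk_def:StabGDOpar}(ii) is contained in a Lebesgue null set, and local stability fails.

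The main obstacle is matching the abstract invariant-manifold machinery to the geometric object of interest: the center-stable manifold theorem asserts the existence of some smooth invariant center piece, whereas we require the limit points to land on the actual zero set $\mathcal M$ rather than on a parallel, fictitious center manifold. I would resolve this by using that $\mathcal M$ is already an invariant manifold tangent to $\ker(\mathrm D\varphi(\theta^*) - \mathds 1_D)$ consisting of genuine fixed points, so any orbit $\varphi^n(\theta_0)$ that converges inside a small neighborhood of $\theta^*$ necessarily converges to a point of $\mathcal M$, making non-uniqueness of center manifolds harmless. A secondary check is regularity: $\mathcal L \in C^2$ gives $\varphi \in C^1$ and $\mathcal M$ is an embedded submanifold by Theorem~\ref{cekk_theo:Cooper}, which is precisely the hypothesis needed for the stable-foliation and center-unstable theorems I plan to invoke.
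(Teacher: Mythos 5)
Your plan matches the paper's own (very terse) sketch: use the stable foliation of the fixed-point manifold $\mathcal M$ for the ``if'' direction and a center-unstable/center-stable manifold argument for the ``only if'' direction; this is exactly what the sketch invokes and what the cited references carry out in detail. The observation that $\mathcal M$ itself is a manifold of genuine fixed points tangent to $\ker(\mathrm D\varphi(\theta^*)-\mathds 1_D)$, so that non-uniqueness of center manifolds is harmless, is also the right way to match the abstract machinery to the concrete geometry.

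Two small slips are worth fixing. First, in the sufficiency direction you shrink $U$ \emph{after} $V$ is given, i.e.\ you prove $\forall V\,\exists U$, whereas Definition~\ref{cekk_def:StabGDOpar}(ii) requires $\exists U\,\forall V$. The fix is easy and preserves your approach: fix a single forward-invariant tubular neighborhood $U$ once and for all; given any $V$, the set $\pi^{-1}(V)\cap U$ is open, nonempty, stays in $U$, and converges into $V$, which is what is needed. Second, in the necessity direction you only conclude measure zero of the relevant set for neighborhoods $U\subset U_0$, but the definition requires ruling out \emph{every} neighborhood $U$. For $U\not\subset U_0$ one needs to observe that, for $V$ small enough, any orbit converging into $V$ eventually enters and stays in $U_0$, hence eventually lands on $W^{\mathrm{cs}}_{\mathrm{loc}}$, so the initial condition lies in $\bigcup_m \varphi^{-m}(W^{\mathrm{cs}}_{\mathrm{loc}})$; concluding that this union is null uses non-singularity of $\varphi$ (which the paper records as generically true via~\cite{Cruaciun24}, in the discussion around Proposition~\ref{cekk_prop:LocGlobEqui}). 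Neither issue changes the route, but both should be stated explicitly in a complete write-up.
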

\begin{proof}[Sketch of proof]
	In the case $\rho(\mathrm D \varphi(\theta^*)|_{\mathcal N(\theta^*)})< 1$, the local stability follows from the stable manifold theorem. In the case $\rho(\mathrm D \varphi(\theta^*)|_{\mathcal N(\theta^*)})> 1$, the lack of local stability follows from the center-unstable manifold theorem. For details, see \cite[Theorem A]{ChemnitzEngel24} or \cite[Section 2]{Ahn22}.
\end{proof}
Using the identity $\mathrm D \varphi(\theta^*)|_{\mathcal N(\theta^*)} = \mathds 1_D|_{\mathcal N(\theta^*)} - \eta \operatorname{Hess} \mathcal L(\theta^*)|_{\mathcal N(\theta^*)}$ and the fact that the operator $\mathcal L(\theta^*)|_{\mathcal N(\theta^*)}: \mathcal N(\theta^*) \to \mathcal N(\theta^*)$ is positive definite, we can see that the stability condition $\rho(\mathrm D \varphi(\theta^*)|_{\mathcal N(\theta^*)})< 1$ is equivalent to 
\begin{equation}\label{cekk_ineq:StabCond2}
	\|\operatorname{Hess} \mathcal L(\theta^*)|_{\mathcal N(\theta^*)}\| < \frac{2}{\eta}.
\end{equation}

\begin{figure}
	\centering
	\begin{overpic}[width=0.55\linewidth]{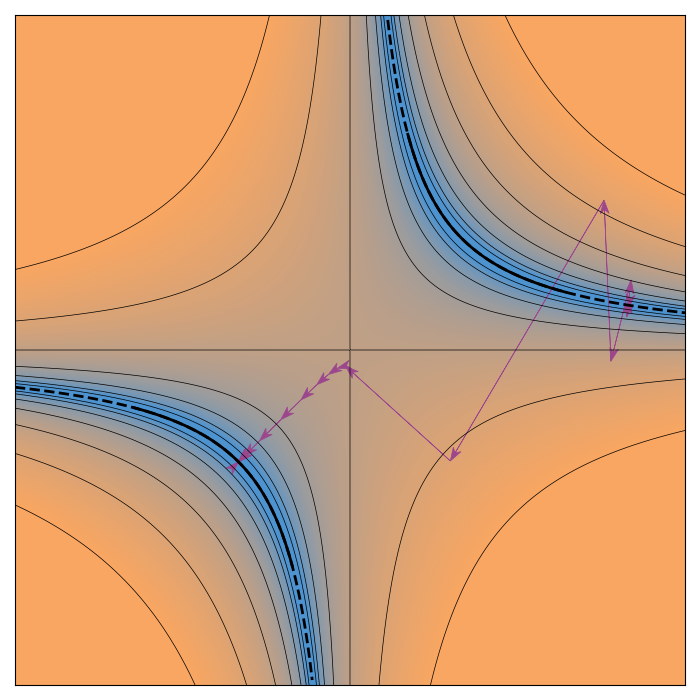}
		\put(47,-3){$\theta^{(1)}$}
		\put(-5,49){$\theta^{(2)}$}
	\end{overpic}
	\caption{Trajectory of gradient descent for an overparameterized optimization problem. The loss function is given by $\mathcal L(\theta) = (1-\theta^{(1)}\theta^{(2)})^2$ for $\theta = (\theta^{(1)}, \theta^{(2)}) \in \R^2$. The unstable sections of the manifold $\mathcal M = \{\theta^{(1)}\theta^{(2)} = 1\}$ of global minima are shown as a dashed line and the stable sections as a solid line. A trajectory of gradient descent with learning rate $\eta = \frac{1}{2}$ is shown in pink. It is initialized with $\theta_0 = (2.5, 0.41)$ close to a unstable region of $\mathcal M$. Gradient descent escapes from that region of $\mathcal M$ and converges to a point in the stable region instead.}
	\label{cekk_fig:overparGD}
\end{figure}

Although this is a direct analog of the stability condition \eqref{cekk_ineq:StabCond} for the overdetermined setting, the flavor is quite different in the overparameterized setting. The condition \eqref{cekk_ineq:StabCond} suggests that the learning rate $\eta$ should be sufficiently small so that some minimum of interest is stable under the algorithm. In contrast, the condition~\eqref{cekk_ineq:StabCond2} should be interpreted as a condition on the global minimum in question rather than as a condition on the learning rate. From this viewpoint, it states that the algorithm can only converge to global minima at which the loss function is sufficiently flat. Since flat minima\index{Flat Minimum} have, at least heuristically, been associated with good generalization\index{Generalization} \cite{Hochreiter97FlatMinima}, this leads to a possible explanation of implicit bias\index{Implicit Bias}. For the choice of the learning rate\index{Learning Rate}, this suggests that higher learning rates should lead to a stricter selection, a flatter minimum, and better generalization. This hypothesis, indeed, has empirical support \cite{Goyal17, Hoffer17}. 

Finally, we point the reader to the numerical study \cite{Cohen20}. Here, it is observed that in practically relevant scenarios, gradient descent converges to the so-called \emph{edge of stability}\index{Edge of Stability}, i.e., to a minimum $\theta^*$ for which 
$$ \|\operatorname{Hess} \mathcal L(\theta^*)|_{\mathcal N(\theta^*)}\| \approx \frac{2}{\eta}.$$
This indicates that the condition \eqref{cekk_ineq:StabCond2} is not just a mathematical technicality. Instead, the chosen learning rate directly influences the flatness of the minimum found by the gradient descent algorithm. In particular, it demonstrates that during the late stages of training the dynamics of gradient descent are qualitatively different from its continuous-time analogue, the gradient flow\index{Gradient Flow} $\dot \theta = - \nabla \mathcal L(\theta)$, under which every strict local minimum is stable. This can, for example, be seen from the fact that gradient descent does not reduce the loss monotonically during the later stages of training, as is demonstrated in \cite{Cohen20}. 

\subsection{Stochastic Gradient Descent as a Random Dynamical System} \label{cekk_subs:RDS}
Most algorithms employed in practice are variants of stochastic gradient descent (SGD)\index{Stochastic Gradient Descent}. Instead of optimizing $\mathcal L$ directly in every iteration, these algorithms pick for each step a random subset $\Xi$ called mini-batch and then optimize for the loss $\mathcal L_\Xi$ corresponding to that subset, which is given by
$$\mathcal L_\Xi(\theta) = \frac{1}{|\Xi|} \sum_{i \in \Xi} \ell(\Phi(\theta,x_i),y_i).$$
Put more precisely, the update rule \eqref{cekk_eq:GD} is replaced by 
$$\theta_{n+1} \coloneqq \theta_n - \eta \nabla \mathcal L_{\Xi_{n+1}}(\theta_n).$$
Here $\Xi_1, \Xi_2,\dots$ is a sequence of random independent, uniformly distributed $B$-element subsets of $[N]$, where $1\leq B<N$ is called the mini-batch size\index{Mini-Batch Size}.

While the stability condition \eqref{cekk_ineq:StabCond2} is widely accepted to be the correct condition for deterministic gradient descent, several different notions have been suggested for stochastic gradient descent \cite{Wu18, MaYing21, ChemnitzEngel24, Andreyev25}. Here, following \cite{ChemnitzEngel24}, we will extend our analysis for gradient descent to the stochastic case, leading to a stability condition in terms of the Lyapunov exponent of a random matrix product\index{Random Matrix Product}. For a detailed discussion on how this condition relates to other notions of stability proposed in the literature, see \cite[Appendix A]{ChemnitzEngel24}.

To extend the stability analysis for gradient descent to stochastic gradient descent, we first introduce a dynamical system framework for stochastic gradient descent. Let $(\Omega, \mathcal F, \mathbb P)$ be the probability space on which the random variables $(\Xi_i)$ are defined, so that for each $i\in \N$ the random variable
$\Xi_i: \Omega \to \{\Xi\subset [N] : |\Xi| = B \}$
is $\mathcal F$-measurable. Stochastic gradient descent defines a random dynamical system\index{Dynamical System!Random Dynamical System} \cite{Arnold98}, which can be constructed as follows. For each possible mini-batch $\Xi \subset [N]$, $|\Xi| = B$, we define a map $\varphi_\Xi: \R^D\to \R^D$ by
$$\varphi_\Xi(\theta) \coloneqq \theta - \nabla \mathcal L_{\Xi}(\theta).$$
For each $\omega \in \Omega$ and each $n \in \mathbb N_0$ we, furthermore, define a map $\varphi_\omega^{(n)}: \R^D \to \R^D$ by $\varphi_\omega^{(0)}(x) \coloneqq x$ and 
$$\varphi^{(n)}_\omega \coloneqq \varphi_{\Xi_n(\omega)} \circ \dots \circ \varphi_{\Xi_1(\omega)}.$$
Again, we have $\varphi^{(n)}_\omega(\theta_0) = \theta_n$.
Unlike the deterministic setting, we no longer have the relation $\varphi^{(n+m)}_\omega = \varphi^{(n)}_\omega \circ \varphi^{(m)}_\omega$. To formulate the analogous property for the random case, we first need to define a shift action on the probability space. Let $\varsigma: \Omega \to \Omega$ be a measurable map which preserves the probability measure $\mathbb P$, i.e., which satisfies 
$$\mathbb P(\varsigma^{-1}(E)) = \mathbb P(E),~\text{for each event } E\in \mathcal F,$$
and for which
$$(\Xi_1(\varsigma \omega), \Xi_2(\varsigma \omega), \dots) = (\Xi_2(\omega), \Xi_3(\omega), \dots).$$
With this notation, it can be checked that the maps $\varphi^{(n)}_\omega$ satisfy the cocycle property\index{Cocycle}
$$\varphi^{(n+m)}_\omega = \varphi^{(n)}_{\varsigma^m\omega} \circ \varphi^{(m)}_\omega,$$
turning the pair $(\varsigma, \varphi)$ into a random dynamical system.

Unlike gradient descent, local or even global minima $\theta^*$ are no longer necessarily fixed points for stochastic gradient descent. However, the interpolation parameters $\theta^* \in \mathcal M$, are almost-sure fixed points of the random dynamical system $\varphi_\omega^{(n)}$, i.e., we have $\varphi_\Xi(\theta^*) = \theta^*$ for every mini-batch $\Xi \subset [N]$, $|\Xi| = B$ and thus $\varphi_\omega^{(n)}(\theta^*)= \theta^*$ almost-surely for any $n \in \N$. Since $\mathcal M$ is typically empty in the overdetermined setting, we can extend the stability analysis to stochastic algorithms only in the overparameterized setting. We mention that in the overdetermined setting, it is still possible to study the convergence of the law of $\theta_n$ to a stationary distribution\index{Stationary Distribution} of the stochastic dynamics, rather than to a single parameter \cite{Gurbuzbalaban21}. Alternatively, one can consider stochastic gradient descent with a decaying learning rate\index{Learning Rate!Decaying Learning Rate}, which does allow for convergence (see e.g.~\cite{Benaim96, Fehrman20}). Both of these approaches are outside of our scope, and we restrict ourselves to the overparameterized setting $D>qN$ in the following.

\subsection{SGD for Overparameterized Problems}\label{cekk_sec:SGDOver}
Analogously to Section \ref{cekk_subs:GDOpar}, we again assume as a standing assumption that the set of interpolation parameters $\mathcal M \coloneqq \{\theta \in \R^D: \mathcal L(\theta) = 0 \}$ is non-empty and forms a $(D-qN)$-dimensional embedded submanifold of $\R^D$ (cf.~Theorem~\ref{cekk_theo:Cooper}). We generalize Definition \ref{cekk_def:StabGDOpar} to stochastic optimization algorithms in the following way.
\begin{definition}
	[Stability of SGD in the Overparameterized Setting]~\label{cekk_def:StabSGD}
	\begin{enumerate}\index{Dynamical Stability!for SGD}
		\item[(i)] A global minimum $\theta^*\in \mathcal M$ is called \emph{stable} if for every neighborhood $V\subset \mathcal M$ of $\theta^*$ which is open in the submanifold topology of $\mathcal M$, the set of possible initializations $\theta_0$ for which stochastic gradient descent converges with positive probability to some global minimum $\theta' \in V$ has positive Lebesgue measure, i.e.
		$$\operatorname{Vol}\left\{\theta \in \R^D :\mathbb P\left(\exists\, \theta' \in V, \text{ s.t. } \lim_{n \to \infty} \varphi^{(n)}_\omega(\theta) = \theta'\right) > 0\right\}>0.$$
		\item[(ii)] A global minimum $\theta^*\in \mathcal M$ is called \emph{locally stable} if there exists an open neighborhood $U$ of $\theta^*$, such that for every neighborhood $V\subset \mathcal M$ of $\theta^*$ which is open in the manifold topology of $\mathcal M$, the set of possible initializations $\theta_0$ for which stochastic gradient descent has a positive probability to stay in $U$ forever while converging to some global minimum $\theta' \in V$, has positive Lebesgue measure, i.e.,
		\begin{multline*}
			\operatorname{Vol}\bigg\{\theta \in \R^D : \mathbb P \left(\forall\, n \in \mathbb N, ~\varphi^n(\theta) \in U \text{ and }\exists\, \theta' \in V, \text{ s.t. } \lim_{n \to \infty} \varphi^n(\theta) = \theta'\right) > 0\bigg\}>0.
		\end{multline*}
	\end{enumerate}
\end{definition}

Again, this notion of stability can be interpreted in terms of the support of $\theta_{\lim}$. Suppose that $\theta_0$ is initialized randomly according to a probability measure $\nu$ on $\R^D$ which is equivalent to the Lebesgue measure (cf.~\eqref{cekk_def:equi}). Let $\theta_n = \varphi_\omega^{(n)}(\theta_0)$ be the iterates of stochastic gradient descent and define the random variable $\theta_{\lim}$ as in \eqref{cekk_eq:thetaLim}. This time, $\theta_{\lim}$ depends on the randomness of initialization and training. Accordingly, we now define $\operatorname{supp}(\theta_{\lim})$ as
$$\operatorname{supp}(\theta_{\lim}) \coloneqq \{\theta \in \mathcal M: \forall\, U \in \mathcal U_{\theta, \mathcal M},~ (\mathbb P \times \nu)(\{(\omega,\theta_0) : \theta_{\lim} \in U\}) > 0\}.$$

\begin{proposition}
	For a global minimum $\theta^* \in \mathcal M$, we have $\theta^* \in \operatorname{supp}(\theta_{\lim})$ if and only if $\theta^*$ is stable in the sense of Definition \ref{cekk_def:StabSGD}(ii).
\end{proposition}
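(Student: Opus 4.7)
The plan is to run essentially the same Fubini-based argument used for Proposition~\ref{cekk_prop:OparInterpr}, with the only substantive change being that the product measure $\mathbb P\times\nu$ now replaces $\nu$. The statement is at heart a bookkeeping exercise relating the support of the push-forward measure $\operatorname{law}(\theta_{\lim})$ to the slice-wise positivity of the probabilities appearing in Definition~\ref{cekk_def:StabSGD}.

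First, I would unpack the event hiding in the support condition. By the construction~\eqref{cekk_eq:thetaLim}, for any open neighborhood $V\subset\mathcal M$ of $\theta^*$,
\[
\{\theta_{\lim}\in V\}=\bigl\{(\omega,\theta_0)\in\Omega\times\R^D:\exists\,\theta'\in V,~\lim_{n\to\infty}\varphi_\omega^{(n)}(\theta_0)=\theta'\bigr\}.
\]
Joint measurability of this set in the product $\sigma$-algebra follows from the fact that each iterate $(\omega,\theta_0)\mapsto\varphi_\omega^{(n)}(\theta_0)$ is jointly measurable (as a composition of maps which are continuous in $\theta_0$ and measurable in $\omega$ through the mini-batch sequence), combined with a standard expression of the convergence event as a countable union/intersection indexed by a countable dense subset of $V\cap\mathcal M$ and rational tolerances.

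Second, I would apply Fubini--Tonelli and then invoke the $\nu$--Lebesgue equivalence~\eqref{cekk_def:equi}:
\[
(\mathbb P\times\nu)\bigl(\{\theta_{\lim}\in V\}\bigr)=\int_{\R^D}\mathbb P\bigl(\exists\,\theta'\in V,~\lim_{n\to\infty}\varphi_\omega^{(n)}(\theta_0)=\theta'\bigr)\,\dd\nu(\theta_0),
\]
which is strictly positive if and only if the set of initializations $\theta_0$ making the inner probability positive has positive $\nu$-measure, equivalently positive Lebesgue measure. Quantifying over all $V\in\mathcal U_{\theta^*,\mathcal M}$ then translates $\theta^*\in\operatorname{supp}(\theta_{\lim})$ into precisely the stability condition from Definition~\ref{cekk_def:StabSGD} at $\theta^*$.

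The one genuinely delicate step I anticipate is the measurability bookkeeping just mentioned; once joint measurability of the convergence event is secured, the rest is routine Fubini together with equivalence of measures. One comment on the formalism: by direct analogy with Proposition~\ref{cekk_prop:OparInterpr}, the equivalence produced by this argument matches Definition~\ref{cekk_def:StabSGD}(i); to reach the locally-stable variant~(ii) one would fix a neighborhood $U$ of $\theta^*$ in $\R^D$ in advance and intersect the convergence event with $\{\forall\,n,\,\varphi_\omega^{(n)}(\theta_0)\in U\}$, after which the identical Fubini argument carries over verbatim.
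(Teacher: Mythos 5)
Your Fubini argument is correct and is exactly what the paper leaves implicit (the paper gives no explicit proof, treating the statement as following ``directly from the definition'' just as for Proposition~\ref{cekk_prop:OparInterpr}). The joint measurability of $\{\theta_{\lim}\in V\}$, the Tonelli disintegration, and the observation that a nonnegative integral is positive if and only if the integrand is positive on a set of positive $\nu$-measure (hence positive Lebesgue measure, by the equivalence~\eqref{cekk_def:equi}) together unpack $\theta^*\in\operatorname{supp}(\theta_{\lim})$ into precisely the condition in Definition~\ref{cekk_def:StabSGD}(i).

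You are also right to flag the (i) versus (ii) issue, and this is worth stating plainly: the argument establishes equivalence with Definition~\ref{cekk_def:StabSGD}(i), \emph{stability}, not (ii), \emph{local stability}. The reference to ``(ii)'' in the paper's statement is almost certainly a typo for ``(i)'' --- note that in the analogous gradient-descent Proposition~\ref{cekk_prop:OparInterpr} the paper references part (i), and the remark immediately following the SGD proposition explains that passing from stable to locally stable requires the additional hypothesis of non-singularity of $\varphi_\Xi$, which would be redundant if the proposition already gave an iff with local stability. One small correction to your closing remark, though: you cannot recover (ii) from the support condition by ``fixing a neighborhood $U$ in advance and intersecting with the confinement event.'' The definition of $\operatorname{supp}(\theta_{\lim})$ simply contains no such $U$-constraint --- it quantifies only over open sets $V$ in the manifold topology of $\mathcal M$ --- so the support condition is intrinsically the (i)-version. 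The route from (i) to (ii) goes through non-singularity of the update maps, as in Proposition~\ref{cekk_prop:LocGlobEqui}, not through a different Fubini computation.
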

If the function $\varphi_\Xi$ is non-singular for every mini-batch $\Xi \subset [N]$, the proof of Proposition \ref{cekk_prop:OdetInterpr} can be easily adapted to show that every stable minimum is also locally stable. Since $\varphi_\Xi$ is the gradient descent map for a reduced training data set, the result of \cite{Cruaciun24} still shows that this condition is satisfied generically. Similarly to our analysis for gradient descent, we can determine whether a global minimum $\theta \in \mathcal M$ is locally stable by linearizing the update maps. However, for stochastic gradient descent, we get a different Jacobian $\mathrm D \varphi_\Xi(\theta^*)$ for each mini-batch $\Xi$. Let $A_\Xi(\theta^*) = \mathrm D \varphi_\Xi(\theta^*)|_{\mathcal N(\theta^*)}: \mathcal N(\theta^*) \to \mathcal N(\theta^*)$
be the transverse part of the Jacobian of $\varphi_\Xi$. The correct analog of the leading eigenvalue of a single matrix to random matrix products is given by the Lyapunov exponent\index{Lyapunov Exponent}, the existence of which is guaranteed by the Furstenberg-Kesten Theorem\index{Furstenberg-Kesten Theorem}.
\begin{theorem}[Furstenberg-Kesten Theorem \cite{Furstenberg60}]
	Let $\theta^*\in \mathcal M$ be a global minimum. There exists a (deterministic) real number $\lambda(\theta^*)$ called the \emph{Lyapunov exponent}, such that for almost every $\omega \in \Omega$ we have
	$$\lambda(\theta^*) = \lim_{n \to \infty} \frac{1}{n} \log \left\|\mathrm D \left(\varphi_\omega^{(n)}\right)(\theta^*)\big|_{\mathcal N(\theta^*)}\right\| = \lim_{n \to \infty} \frac{1}{n} \log \|A_{\Xi_n(\omega)} \dots A_{\Xi_1(\omega)}\|.$$
\end{theorem}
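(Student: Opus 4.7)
The plan is to recognize this as a direct application of Kingman's subadditive ergodic theorem (equivalently, the original Furstenberg--Kesten argument) to the matrix cocycle induced by stochastic gradient descent on the normal bundle of $\mathcal{M}$. First I would set up the ergodic-theoretic framework: the shift $\varsigma: \Omega \to \Omega$ is measure-preserving by assumption, and since $(\Xi_i)_{i\in\mathbb{N}}$ is i.i.d., the sequence of random matrices $(A_{\Xi_i})_{i\in\mathbb{N}}$ is stationary and the shift is ergodic on the relevant sub-$\sigma$-algebra.

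Second, I would justify that the restriction $A_\Xi(\theta^*) = \mathrm{D}\varphi_\Xi(\theta^*)|_{\mathcal{N}(\theta^*)}$ is genuinely well-defined as an endomorphism of $\mathcal{N}(\theta^*)$ and that the chain rule yields the cocycle identity
\begin{equation*}
\mathrm{D}\varphi_\omega^{(n)}(\theta^*)\big|_{\mathcal{N}(\theta^*)} = A_{\Xi_n(\omega)}(\theta^*)\cdots A_{\Xi_1(\omega)}(\theta^*),
\end{equation*}
which proves the first equality in the statement. The key observation is that each $\theta \in \mathcal{M}$ satisfies $\mathcal{L}_\Xi(\theta) = 0$ for every mini-batch $\Xi$, because $\ell \geq 0$ and $\mathcal{L}(\theta) = 0$ forces each summand to vanish. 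Hence $\mathcal{M}$ lies in the zero set of each $\mathcal{L}_\Xi$, so $\mathcal{T}(\theta^*) \subset \ker(\operatorname{Hess}\mathcal{L}_\Xi(\theta^*))$; symmetry of the Hessian then gives that $\operatorname{Hess}\mathcal{L}_\Xi(\theta^*)$, and therefore $\mathrm{D}\varphi_\Xi(\theta^*) = \mathds{1}_D - \eta\operatorname{Hess}\mathcal{L}_\Xi(\theta^*)$, preserves the orthogonal splitting $\mathbb{R}^D = \mathcal{T}(\theta^*) \oplus \mathcal{N}(\theta^*)$. Combined with $\varphi_\Xi(\theta^*) = \theta^*$ for every $\Xi$, the chain rule applied iteratively yields the product formula above.

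Third, I would set $f_n(\omega) \coloneqq \log\|A_{\Xi_n(\omega)}\cdots A_{\Xi_1(\omega)}\|$ and verify the subadditivity relation
\begin{equation*}
f_{n+m}(\omega) \leq f_n(\varsigma^m\omega) + f_m(\omega),
\end{equation*}
which follows from submultiplicativity of the operator norm together with the definition of $\varsigma$. Integrability of $f_1^+$ is immediate: the collection $\{A_\Xi(\theta^*) : \Xi \subset [N],\,|\Xi| = B\}$ is finite, so $\|f_1\|_\infty < \infty$. Kingman's subadditive ergodic theorem then yields a $\varsigma$-invariant random variable $\lambda(\theta^*)$ such that $\tfrac{1}{n} f_n(\omega) \to \lambda(\theta^*)$ almost surely and in $L^1$; ergodicity of $\varsigma$ on the $\sigma$-algebra generated by $(\Xi_i)$ upgrades $\lambda(\theta^*)$ to a deterministic constant, proving the second equality.

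The main conceptual obstacle is the invariance of the splitting $\mathcal{T}(\theta^*) \oplus \mathcal{N}(\theta^*)$ under every $\mathrm{D}\varphi_\Xi(\theta^*)$, since without this the restriction to $\mathcal{N}(\theta^*)$ would not yield a linear map on that subspace and the cocycle identity would fail. Everything else is essentially bookkeeping around a standard invocation of Kingman's theorem, with integrability being trivial in this finite-sample-space setting.
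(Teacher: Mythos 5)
The paper cites the Furstenberg--Kesten theorem from \cite{Furstenberg60} without providing an in-paper argument, so there is no proof to compare against line by line. Your proposal is a correct derivation via Kingman's subadditive ergodic theorem, which is the standard modern route and subsumes the original Furstenberg--Kesten result for matrix products. The genuine mathematical content that must be supplied beyond the abstract ergodic theorem is the cocycle identity, i.e., that $\mathrm D\varphi_\omega^{(n)}(\theta^*)|_{\mathcal N(\theta^*)}$ factors as the product $A_{\Xi_n(\omega)}(\theta^*)\cdots A_{\Xi_1(\omega)}(\theta^*)$. You identify this correctly as the crux: since $\ell\geq 0$ forces each summand of $\mathcal L$ to vanish on $\mathcal M$, every $\mathcal L_\Xi$ vanishes identically on $\mathcal M$ and attains its global minimum there, so $\operatorname{Hess}\mathcal L_\Xi(\theta^*)$ is positive semi-definite with $\mathcal T(\theta^*)\subset\ker\operatorname{Hess}\mathcal L_\Xi(\theta^*)$; symmetry then forces each $\mathrm D\varphi_\Xi(\theta^*)=\mathds 1_D-\eta\operatorname{Hess}\mathcal L_\Xi(\theta^*)$ to preserve the splitting $\mathcal T(\theta^*)\oplus\mathcal N(\theta^*)$, and together with $\varphi_\Xi(\theta^*)=\theta^*$ the chain rule gives the product formula for restrictions. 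That is precisely the bridge the citation implicitly relies on, and your treatment of it is complete. The subadditivity check, ergodicity of the i.i.d. shift, and the passage from a $\varsigma$-invariant limit to a deterministic constant are all handled correctly.

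One minor overclaim: $\|f_1\|_\infty<\infty$ is stronger than what is true in general, since it would fail if some $A_\Xi(\theta^*)$ were the zero operator (nothing in the theorem's hypotheses rules this out --- the regularity assumption is introduced only afterwards). What Kingman requires, and what the finiteness of the mini-batch collection actually delivers, is $f_1^+\in L^1$, which is enough for the a.s.\ limit to exist as a deterministic element of $[-\infty,\infty)$. The possibility $\lambda(\theta^*)=-\infty$ is a wrinkle in the paper's phrasing ``real number'' rather than a defect in your argument, but it is cleaner to state the integrability condition you actually need.
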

The condition $\lambda(\theta^*) < 0$ is the correct generalization of the stability condition $\rho(\mathrm D \varphi(\theta^*)|_{\mathcal N(\theta^*)}) < 1$ for gradient descent to stochastic gradient descent. In fact, if $\rho(\mathrm D \varphi(\theta^*)|_{\mathcal N(\theta^*)}) < 1$, by Gelfand's formula, we have
\begin{align*}
	\lim_{n \to \infty} \frac{1}{n} \log \left\|\mathrm D \left(\varphi^{n}\right)(\theta^*)\big|_{\mathcal N(\theta^*)}\right\| &= \lim_{n \to \infty} \frac{1}{n} \log \left\|\left(\mathrm D \varphi(\theta^*)\big|_{\mathcal N(\theta^*)}\right)^n\right\| \\
	&= \log(\rho(\mathrm D \varphi(\theta^*)|_{\mathcal N(\theta^*)})) < 0,
\end{align*}
establishing the connection. Thus, it is reasonable to assume that $\theta^*$ is locally stable if $\lambda(\theta^*) < 0$ and only if $\lambda(\theta^*) \leq 0$. However, the proof is significantly more challenging than in the deterministic case due to the possibility of non-uniform hyperbolicity\index{Non-Uniform Hyperbolicity}. To tackle this hurdle, extra assumptions on the global minimum in question are required.\index{Regular Minimum}
\begin{definition}[Regular Minima]
	A global minimum $\theta^* \in \mathcal M$ is called regular if
	\begin{enumerate}
		\item[(i)] for each mini-batch $\Xi$ the operators $A_\Xi(\theta^*)$ and $\operatorname{Id}-A_\Xi(\theta^*)$ are invertible, and
		\item[(ii)] the semigroup $S(\theta^*)$ generated by $\{A_\Xi(\theta^*): \Xi \subset [N],~|\Xi| = B\}$ is strongly irreducible, i.e., there is no finite set of proper subspaces of $\mathcal N(\theta^*)$ which is invariant under $S(\theta^*)$.
	\end{enumerate}
\end{definition}

Note that a generic global minimum is regular. With this definition at hand, it has been shown in \cite{ChemnitzEngel24} that, for regular global minima $\theta^*$, the Lyapunov exponent $\lambda(\theta^*)$ does indeed determine the local stability of $\theta^*$.
\begin{theorem}[Local Stability of Regular Minima for SGD~{\cite[Theorem B]{ChemnitzEngel24}}]\index{Dynamical Stability!for SGD}\label{cekk_theo:SGD}
	Let $\theta^* \in \mathcal M$ be a regular global minimum. Then $\theta^*$ is locally stable if $\lambda(\theta^*) < 0$ and only if $\lambda(\theta^*) \leq 0$.
\end{theorem}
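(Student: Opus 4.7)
The plan is to treat the iterated maps near $\theta^*$ as a random dynamical system with a non-random fixed point and to reduce the problem to the spectral behaviour of the linear cocycle $(A_{\Xi_n}(\theta^*))_{n\in\mathbb N}$ on $\mathcal N(\theta^*)$. The crucial structural observation is that every $\varphi_\Xi$ fixes $\theta^*$ and its Jacobian preserves the splitting $\mathbb R^D = \mathcal T(\theta^*) \oplus \mathcal N(\theta^*)$, acting as the identity on $\mathcal T(\theta^*) = \ker \operatorname{Hess}\mathcal L(\theta^*)$ and as $A_\Xi(\theta^*)$ on $\mathcal N(\theta^*)$. By the Furstenberg--Kesten theorem together with strong irreducibility of $S(\theta^*)$, the top Lyapunov exponent $\lambda(\theta^*)$ is simple and coincides with the almost-sure exponential growth rate of $\|A_{\Xi_n}\cdots A_{\Xi_1} v\|$ for \emph{every} nonzero $v \in \mathcal N(\theta^*)$; this will be the linchpin of both directions.

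For the sufficient direction $\lambda(\theta^*)<0$, I would invoke the random stable manifold theorem for $C^{1+\alpha}$ cocycles in the spirit of Ruelle and Arnold. In a tubular neighborhood of $\theta^*$, introduce coordinates $\theta = \theta^* + \tau + \nu$ with $\tau\in\mathcal T(\theta^*),\ \nu\in\mathcal N(\theta^*)$, and for each nearby base point $\bar\theta\in\mathcal M$ construct a random local stable manifold $W^{\mathrm s}_\omega(\bar\theta)$ of dimension $\dim\mathcal N(\theta^*)=qN$, transverse to $\mathcal M$, along which the SGD iterates contract to $\bar\theta$ at rate arbitrarily close to $\lambda(\theta^*)$. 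As $\bar\theta$ varies over a small neighborhood $V'\subset V$ of $\theta^*$ inside $\mathcal M$, these leaves foliate a neighborhood of $V'$ in $\mathbb R^D$; a Fubini argument using the transversality and measurable dependence of the leaves on $(\omega,\bar\theta)$ then shows that the set of initial conditions converging to some point of $V$ has positive Lebesgue measure with positive probability, which is exactly local stability.

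For the necessary direction, assume $\lambda(\theta^*)>0$ and fix any small neighborhood $U$ of $\theta^*$. Strong irreducibility implies that for \emph{every} nonzero $\nu_0\in\mathcal N(\theta^*)$ the iterates of the linearised cocycle satisfy
\[
\lim_{n\to\infty}\frac{1}{n}\log \bigl\|A_{\Xi_n(\omega)}\cdots A_{\Xi_1(\omega)}\nu_0\bigr\| = \lambda(\theta^*) > 0 \quad \text{a.s.},
\]
so no proper subspace of $\mathcal N(\theta^*)$ can be stable. A random centre-unstable manifold theorem (or, equivalently, a Hartman--Grobman-type linearisation combined with tempered Pesin estimates) then controls the nonlinear remainder and implies that, almost surely, every trajectory whose normal component is nonzero eventually leaves $U$. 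The set of initial conditions with vanishing normal component is contained in $\mathcal M$ and hence of Lebesgue measure zero, so the positive-measure condition in Definition~\ref{cekk_def:StabSGD}(ii) fails.

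The main obstacle I expect is handling \emph{non-uniform hyperbolicity}: although $\lambda(\theta^*)$ encodes the asymptotic contraction/expansion rate, the finite-time singular values of $A_{\Xi_n}\cdots A_{\Xi_1}$ can fluctuate wildly, so that the nonlinear perturbation cannot be dominated by a naive Banach fixed point argument. This forces the use of tempered Lyapunov norms and Pesin blocks to obtain uniform estimates on random compact sets of full measure, and it is precisely here that the regularity conditions (invertibility of $A_\Xi(\theta^*)$ and $\operatorname{Id}-A_\Xi(\theta^*)$, and strong irreducibility) are used: invertibility makes the cocycle a genuine $\mathrm{GL}$-cocycle so that Oseledets' theorem applies in both time directions, while strong irreducibility rules out invariant subbundles that would otherwise create hidden neutral directions and spoil the dichotomy between $\lambda<0$ and $\lambda>0$.
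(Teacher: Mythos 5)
Your proposal takes a genuinely different route from the paper. The structural observations are right (every $\varphi_\Xi$ fixes $\mathcal M$ pointwise, the Jacobians respect the tangent/normal splitting, and the linear cocycle $(A_{\Xi_n}(\theta^*))$ on $\mathcal N(\theta^*)$ governs transverse behavior), but the core technical tool differs. The paper's proof follows a technique of Baxendale and Stroock for the stability of invariant manifolds of stochastic flows, which works with \emph{moment} estimates rather than pathwise Pesin theory: roughly, one shows that if $\lambda(\theta^*)<0$ there is a small exponent $p>0$ with $\lim_n\frac1n\log\mathbb E\|A_{\Xi_n}\cdots A_{\Xi_1}\|^p<0$, leading to a supermartingale-type bound for $\dist(\theta_n,\mathcal M)^p$ that is uniform over a deterministic tubular neighborhood of $\mathcal M$, and hence to almost-sure convergence starting from a set of initializations of positive Lebesgue measure; a companion moment/large-deviation estimate handles the instability direction. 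You instead propose the pathwise Ruelle/Pesin random-invariant-manifold machinery and a Fubini argument. Both routes are plausible in principle, but the moment approach is chosen precisely to sidestep the central delicacy of the pathwise one: the radii of the random stable leaves $W^{\mathrm s}_\omega(\bar\theta)$ are only tempered random variables and can be arbitrarily small, so the ``foliated neighborhood'' of $V'$ has $\omega$-dependent size, and turning that into a deterministic positive-Lebesgue-measure set of $\theta_0$ that converges with positive probability is exactly where the difficulty of non-uniform hyperbolicity bites. Your one-line Fubini step gestures at this but does not resolve it, whereas the moment bound delivers the required uniformity directly.

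One factual slip worth flagging: strong irreducibility of $S(\theta^*)$ together with the Furstenberg--Kesten theorem does \emph{not} imply simplicity of the top Lyapunov exponent; for instance, a strongly irreducible semigroup of orthogonal matrices has all Lyapunov exponents equal to zero. Simplicity additionally requires a proximality/contraction assumption (Guivarc'h--Raugi), which is not part of the regularity hypothesis here. Fortunately you do not actually need simplicity: the statement you use, namely that $\frac1n\log\|A_{\Xi_n}\cdots A_{\Xi_1}v\|\to\lambda(\theta^*)$ almost surely for \emph{every} nonzero $v\in\mathcal N(\theta^*)$, follows from strong irreducibility alone (Furstenberg's theorem), and is the correct linchpin for the instability direction. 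With that corrected, the outline is coherent, but filling in the Pesin-theoretic details (tempered Lyapunov norms, random graph transform, control of the nonlinear remainder in a regime where the normal component can become arbitrarily small along a typical orbit before growing) would require substantial additional work that the paper's moment-based route avoids.
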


The proof of Theorem \ref{cekk_theo:SGD}, as presented in \cite{ChemnitzEngel24}, is inspired by a technique developed by Baxendale and Stroock \cite{BaxendaleStroock88} to study the stability of invariant manifolds for stochastic processes. Possible extensions of Theorem \ref{cekk_theo:SGD} are discussed in \cite[Section~2.6]{ChemnitzEngel24}. 

In summary, we have seen in this section that (stochastic) gradient descent can be analyzed using (random) dynamical systems. Selecting an adequate concept of stability turns out to be crucial. In particular, in the overparameterized setting is actually highly nontrivial to employ the concept of the leading Lyapunov exponent in Theorem~\ref{cekk_theo:SGD} for concretely given SGD more globally, i.e., beyond local stability. From a dynamics perspective this is not surprising as a similar challenge occurs, e.g., in the context of estimating Lyapunov exponents in chaotic systems. The setup we consider is a vast simplification of real machine learning applications which use advanced optimization algorithms and learning rate schedules. Still, many of the concepts introduced in this section apply to more complex settings as well. Possible generalizations of our results are discussed in detail in \cite[Section 2.7]{ChemnitzEngel24}.

%%%%%%%%%%%%%%%%%%%%%%%%%%%%%%%%%%%%%%%%%%%%%%%%%%%%%%%%%%%
\section{Mean-Field Limits, Generative Models and Optimization}
\label{cekk_sec:multi}

In Sections \ref{cekk_sec:neural} and \ref{cekk_sec:opti}, we have already seen two key examples, how problems in machine learning are deeply underpinned by dynamical systems theory. Hence, it is natural to ask how general the dynamics approach is. Can we typically convert a machine learning question so that it can be attacked via techniques from nonlinear dynamics? Here we shall show that the answer to this question is overwhelmingly positive since, almost always, a dynamics perspective is possible and helpful. In this section, we shall illustrate the dynamics viewpoint for a few exemplary topics (in no particular importance ordering) to demonstrate the generality of the approach.

%%%%%%%%%%%%%%%%%%%%%%%%%%%%%%%%%%%%%%%%%%%%%%%%%%%%%%%%%%%%%%%%%%%%%%%%%%%%%%%%%%%%%%%%%%
\subsection{Boltzmann Machines \& Hopfield Networks as Special Cases}
\label{cekk_sec:Boltzmann}

Boltzmann machines\index{Boltzmann Machine} are effectively network variants of the classical Ising~\cite{Ising}\index{Ising Model} and Sherrington-Kirkpatrick~\cite{SherringtonKirkpatrick}\index{Sherrington-Kirkpatrick Model} models . In turn, all these systems are special cases of Hamiltonian particle systems having the Markov random field property~\cite{Isham}\index{Markov Random Field Property}. These systems are closely linked to general contact process theory~\cite{Liggett}\index{Dynamic Contact Process Theory}. Overall, all these previous classes are again just special cases of interacting particle systems (IPS)\index{Interacting Particle System (IPS)} on networks~\cite{Newman,PorterGleeson}. If learning is taken into account, all these systems become special cases of adaptive (or co-evolutionary networks)~\cite{Berneretal}\index{Adaptive Network}. The field of large IPS can at least be traced back to the studies of gas dynamics by Boltzmann~\cite{Boltzmann,Cercignani,Golse}. 

More concretely, consider a simple graph $\cG=(\cV,\cE)$ with vertex set $\cV$, edge set $\cE$ and fix the cardinality $|\cV|=M$. Denote the adjacency matrix of $\cE$ by $A=(a_{ij})_{i,j=1}^M\in\R^{M\times M}$ and let $b\in\R^M$ be a vector of parameters. Consider a phase space $\cX=\{0,1\}^M$, time domain $\cT=\N$, so that each vertex satisfies $v_i=v_i(k)\in\{0,1\}$ with $k\in\N$ and $i\in[M]=\{0,1,2\ldots,M\}$. In particular, we think of each vertex (or particle, node, or agent) as having dynamics that can either take value $0$ or $1$. Define a stochastic
%/random 
dynamical system\index{Dynamical System!Stochastic Dynamical System} on $\cG$ by
\be
\label{cekk_eq:stochIPS}
v_i(k+1)=\left\{
\begin{array}{ll}
	1 & \text{with probability $p_{ik}\coloneqq f(v_i(k);A,b)$,}\\
	0 & \text{with probability $1-p_{ik}$,}\\
\end{array}
\right.
\ee 
where one has to select a, usually nonlinear, function $f$. The stochastic dynamical system~\eqref{cekk_eq:stochIPS} is a variant of classical IPS of Ising-Sherrington-Kirkpatrick type. One common choice for $f$ referred to as Boltzmann machine~\cite{AckleyHintonSejnowski} is  
\be
\label{cekk_eq:energyBM}
f(v_i(k))=\frac{1}{1+\exp(b_i+\sum_{j=1}^M a_{ij}v_j(k))}.
\ee
The link to classical statistical mechanics is observed by noticing that~\eqref{cekk_eq:energyBM} relates to an energy (or Hamiltonian) 
\be
\label{cekk_eq:Hamiltonian}
\mathcal{H}(v)\coloneqq\sum_{i=1}^M v_i b_i -\frac12 \sum_{i,j=1}^M v_i v_j a_{ij}.
\ee
In fact, the slightly earlier-developed Hopfield model~\cite{Hopfield}\index{Hopfield Model} has the same energy~\eqref{cekk_eq:Hamiltonian} except that it uses the deterministic limiting update rule
\be
\label{cekk_eq:stochIPS1}
v_i(k+1)=\left\{
\begin{array}{ll}
	1 & \text{if $p_{ik}\coloneqq f(v_i(k);A,b)>0$,}\\
	0 & \text{else.}\\
\end{array}
\right.
\ee 
A wide variety of dynamical phenomena occur in such Ising-type models, including convergence to a stationary distribution (the Boltzmann distribution) or, upon parameter variation, the existence of phase transitions~\cite{Goldenfeld}, which can usually be associated with bifurcations~\cite{GH,Kuznetsov} in a mean-field or continuum limit. The additional layer of complexity added within the machine learning context is to train the weights $a_{ij}$ and/or the elements of $b$ via given data and optimization algorithms. This explains the tendency to rename classical parameters in a dynamical system, such as $A$ and $b$, to ``hyperparameters''\index{Hyperparameters}. This convention is unfortunately misleading. It would have been more logical, much cleaner, and more consistent if one would call hyperparameters also phase/state space variables, as they also evolve dynamically during learning; see Section~\ref{cekk_sec:opti}. 

Similarly to Section~\ref{cekk_sec:neural}, recall that one splits the vertices into an input or visible set\index{Visible Vertex} $\cV_{\textnormal{in}}$ and hidden set $\cV_{\textnormal{hid}}$\index{Hidden Vertex}. Let us denote the indices in $\cV_{\textnormal{in}}$ by $\{1,2,\ldots,d\}$ with $d\leq M$. Training data consists of a family of binary vectors $\tilde{v}\in\{0,1\}^M$. Denote the probability distribution over the training set as $P^+(\cV_{\textnormal{in}})$. Suppose the Boltzmann machine reaches its equilibrium distribution $P^*(\cV)$, and denote is marginalization over $\cV_{\textnormal{hid}}$ as $P^-(\cV_{\textnormal{in}})$. Learning should make these two distributions close. Of course, in general, it is favorable to abstract and simplify this situation, i.e., we have two probability distributions $P^+$ and $P^-$, say with associated probability measures $\mu^+$ and $\mu^-$, and we want to make their distance close by learning. One option would be to pick a metric ${\bf d}$ on the space of probability measures and consider the problem
\be
\label{cekk_eq:minabstract}
\min_{A,b} {\bf d}(\mu^+,\mu^-).
\ee  
The most classical case does not quite consider a metric but uses the Kullback-Leibler divergence\index{Kullback-Leibler Divergence} or relative entropy\index{Relative Entropy}, e.g., in the case of the Boltzmann machine or Hopfield model, one often finds the minimization problem 
\be
\label{cekk_eq:minabstract1}
\min_{A,b} \sum_{V}^M P^+(V) \ln \left(\frac{P^+(V)}{P^-(V)}\right),
\ee
where we sum over all possible configurations $V\in\{0,1\}^M$. Clearly, \eqref{cekk_eq:minabstract}-\eqref{cekk_eq:minabstract1} are then just standard optimization problems that could be attacked using gradient descent or stochastic gradient descent as discussed in Section~\ref{cekk_sec:opti}. This again leads to a (stochastic/random) dynamical system, with phase or state space variables $A,b$. We highlight that, instead of considering discrete time $\cT=\N$ and a discrete state space $\cX$, there are always natural continuous-time and continuous-space analogs (and vice versa). For example, consider $x_i=x_i(t)\in\R$ and define the classical continuous-time continuous-state-space Hopfield model as 
\be
\label{cekk_eq:Hopfieldcts}
x_i'\coloneqq\frac{\txtd x_i}{\txtd t}=-\alpha x_i+\sum_{j=1}^Ma_{ij}h(x_j) + b_i,\quad i\in[M],
\ee
where $\alpha>0$ is a relaxation parameter for each individual neuron. Evidently, the system~\eqref{cekk_eq:Hopfieldcts} is just a set of differential equations on a network. The general class of network models of neural interactions has been developed in many other contexts and before Hopfield considered it in machine learning, e.g., we refer to the groundbreaking works of Amari and Wilson/Cowan ~\cite{Amari2,Amari3,WilsonCowan1,WilsonCowan} in biophysics. 

To illustrate the scientific universality of network dynamics even further, consider again a general network/graph $\mathcal{G}=(\mathcal{V},\mathcal{E})$. Assume each vertex has a $d$-dimensional smooth manifold $\mathcal{X}$ as its phase space, i.e., $x_j=x_j(t)\in\mathcal{X}$ with $t\in\mathbb{R}$ and $\dim(\cX)=d$. Common cases are $\mathcal{X}=\mathbb{R}^d$, the sphere $\mathcal{X}=\mathbb{S}^d$, or the torus $\mathbb{T}^d$. Assume that each individual vertex/node $i$ has dynamics
\begin{equation}
	\label{cekk_eq:ODE}
	x_i'\coloneqq\frac{\textnormal{d} x_i}{\textnormal{d} t} =f_i(x_i),\quad x_i(0)=x_{i,0},~i\in[M], 
\end{equation}
for sufficiently smooth sections $f_i$ mapping into the tangent bundle $\textnormal{T}\mathcal{X}$. We assume that~\eqref{cekk_eq:ODE} yields a smooth flow $\phi_t(x_{i,0})=x_i(t)$ defined for all $t\in \mathbb{R}$. As our most abstract class of continuous-time continuous-space network dynamical systems (NDS)\index{Dynamical System!Network Dynamical System}, we are going to consider the ODEs
\begin{equation}
	\label{cekk_eq:key}
	x_i' = f_i(x_i) + \sum_{j=1}^M a_{ij}~g(x_i,x_j),\qquad i\in[M],~x_i=x_i(t)\in\mathcal{X}, 
\end{equation} 
for a coupling map $g:\mathcal{X}\times \mathcal{X}\rightarrow \textnormal{T} \mathcal{X}$\index{Coupling Function}. The NDS~\eqref{cekk_eq:key} is very common in complex systems and includes\index{Boltzmann Machine}\index{Kuramoto Model}\index{Desai-Zwanzig Model}\index{van-der-Pol Oscillator}\index{FitzHugh-Nagumo Model}\index{Hopfield Model}
	\begin{flalign}
		&\textnormal{(Boltzmann)} 
		&& \mathcal{X}=\mathbb{R}^6, && f_i(x_i)=(x_{i,1},x_{i,2},x_{i,3},0,0,0)^\top, \nonumber \\
		&&&a_{ij}\equiv \frac{1}{M},&&~g_{4,5,6}(x_i,x_j)=\gamma\frac{(x_i-x_j)}{\|x_i-x_j\|^3} \hspace{10mm} \\
		&\textnormal{(Kuramoto)} 
		&& \mathcal{X}=\mathbb{S}^1\coloneqq\mathbb{R}/(2\pi \mathbb{Z}),&& f_i(x_i)\equiv 
		\omega_i\in\mathbb{R}, \nonumber \\
		&&& a_{ij}\equiv \frac{K}{M}, &&g(x_i,x_j)=\sin(x_j-x_i) \label{cekk_eq:Kuraoriginal} \\
		&\textnormal{(Desai-Zwanzig)} &&\mathcal{X}=\mathbb{R}, &&f_i(x_i)=-V'(x_i), \nonumber\\ 
		&&& a_{ij}\equiv \frac{K}{M},
		&&g(x_i,x_j)=x_j-x_i,\\
		&\textnormal{(coupled vdP/FHN)}&&\mathcal{X}=\mathbb{R}^2, && f_i(x_i)=(x_{i,2}-\frac13 x_{i,1}^3+x_{i,1},
		-\varepsilon x_{i,1})^\top, \nonumber\\ 
		&&& a^M_{ij}\equiv \frac{K}{M}, && g_1(x_i,x_j)=x_j-x_i,\\
		&\textnormal{(Hopfield)} &&\mathcal{X}=\mathbb{R}, &&f_i(x_i)\equiv -\alpha x_i-b_i, \nonumber \\
		&&& a_{ij}\equiv \frac{K}{M}, &&g(x_i,x_j)=\frac{1}{1+\textnormal{e}^{-x_j}}
	\end{flalign}
	\begin{flalign}
		&\textnormal{(Hegselmann-Krause)} &&\mathcal{X}=[0,1], &&f_i(x_i)\equiv 0, \nonumber \\
		&&& a_{ij}\equiv \frac{K}{M}, && g(x_i,x_j)=(x_j-x_i){\bf 1}_{\{-c_i\leq x_j-x_i\leq d_i\}}\\ 
		&\textnormal{(Cucker-Smale)}&&\mathcal{X}=\mathbb{R}^2, && f_i(x_i)=(x_{i,2},0)^\top, \nonumber \\
		&&& a_{ij}\equiv \frac{K}{M}, && g_2(x_i,x_j)=\frac{x_{j,2}-x_{i,2}}{1+\|x_{i,1}-x_{j,1}\|^\alpha},
\end{flalign}
where $i\neq j$, $a_{ii}^M=0$, the sign of $\gamma\in\mathbb{R}$ determines whether we consider Coulomb or gravitational interaction in the collision-less Boltzmann gas dynamics case~\cite{Neunzert}, $K>0$ is a parameter controlling the coupling strength, the fixed frequencies $\omega_i$ for the Kuramoto oscillators are usually sampled from a given probability distribution~\cite{Kuramoto,Strogatz1}, $V$ is a given confining potential for particles in Desai-Zwanzig~\cite{DesaiZwanzig}, $\varepsilon>0$ is a small parameter for the time scale separation in the diffusively coupled van-der-Pol/FitzHugh-Nagumo~\cite{FitzHugh,Nagumo,vanderPol} oscillator case~\cite{Winfree1,SomersKopell}, $\alpha>0$ and $b_i\in\mathbb{R}$ are parameter pairs controlling decay to the rest state and external inputs for the individual neurons in the Hopfield~\cite{Hopfield} case, $c_i,d_i>0$ are generosity and competition thresholds for the indicator function of each agent in Hegselmann-Krause~\cite{HegselmannKrause} opinion dynamics, and finally $\alpha>0$ is a parameter controlling the nonlinearity for Cucker-Smale~\cite{CuckerSmale1} model for flocking/swarming. In fact, this list of models in the form~\eqref{cekk_eq:key} is highly incomplete, there are many more examples, e.g., in ecology (e.g.~food webs), in neuroscience (e.g.~spiking neural networks), and effectively in all other sciences. It is quite frequent that separate scientific communities have developed their own independent names and terminology to study the class of NDS~\eqref{cekk_eq:key}. In fact, this makes rediscoveries quite likely, which could be avoided by just recognizing that the abstract mathematical structure is the same. \index{Hegselmann-Krause Model} \index{Cucker-Smale Model}

%%%%%%%%%%%%%%%%%%%%%%%%%%%%%%%%%%%%%%%%%%%%%%%%%%%%%%%%%%%%%%%%%%%%%%%%%%%%%%%%%%%%%%%%%%
\subsection{Mean-Field Limits of Interacting Particle Systems}
\label{cekk_sec:mean_field}\index{Mean-Field Limit}

From the perspective of dynamical systems, it is much more practical and effective to treat the class of NDS~\eqref{cekk_eq:key} as one common field of study. There is a multitude of results available already, although it is generally difficult to rigorously prove statements for finite fixed but large $M$ beyond special cases such as Hamiltonian/integrable structure, symmetries, or multiscale reduction. One device to sometimes ameliorate this difficulty is to pass to a mean-field or continuum limit $M\ra \I$, which we shall discuss next. Here we shall not discuss the continuum limit, which arranges the vertices on a discrete structure and then considers a limit. 

\subsubsection*{All-to-all Coupled Systems}\index{All-To-All Coupling}

For didactic simplicity, consider first the case for all-to-all coupled systems~\cite{Golse} 
\benn
a^M_{ij}=1~\text{ if $i\neq j$}\qquad \text{and}\qquad a^M_{ii}=0~~\text{ if $i\in[M]$},
\eenn
where we have introduced the index $M$ to indicate that the adjacency matrix depends on the number of particles/neurons. Consider~\eqref{cekk_eq:key} for simplicity with $f_i\equiv0$ and $a^M_{ij}\equiv1/M$ for $i\neq j$, $\mathcal{X}=\mathbb{R}^d$, so that
\begin{equation}
	\label{cekk_eq:kin}
	x_i' = \frac1M\sum_{j=1}^M g(x_i,x_j), 
\end{equation} 
where $a^M_{ii}=0$ is incorporated by assuming $g(x,\tilde{x})=-g(\tilde{x},x)$. Suppose we had
\begin{equation*}
	\frac1M\sum_{j=1}^M g(x_i(t),x_j(t)) \rightarrow \int_{\mathbb{R}^d} g(x_i(t),\tilde{x})~\mu(t,\textnormal{d} \tilde{x})\,,\qquad \text{for }M\rightarrow \infty,
\end{equation*}
for a measure $\mu(t,x)$, or even better for a measure with a density $\mu(t,x)=u(t,x)~\textnormal{d} x$.
Then one can replace~\eqref{cekk_eq:kin} by the characteristic ODE
\begin{equation}
	\label{cekk_eq:kin1}
	x' = \int_{\mathbb{R}^d} g(x,\tilde{x})~\mu(t,\textnormal{d} \tilde{x}). 
\end{equation}
The equation \eqref{cekk_eq:kin1} is the characteristic ODE for the first-order PDE
\begin{equation}
	\label{cekk_eq:Vlasov}
	\partial_t \mu = -\nabla_x \cdot (\mu \mathcal{V}[\mu]),\qquad \mathcal{V}[\mu](t,x)\coloneqq\int_{\mathbb{R}^d} g(x,\tilde{x})~\mu(t,\textnormal{d} \tilde{x}).    
\end{equation} 
The PDE~\eqref{cekk_eq:Vlasov} simplifies further if $\mu$ has a density $u$, leading to a more classical Vlasov equation\index{Vlasov Equation}
\begin{equation}
	\label{cekk_eq:Vlasov1}
	\partial_t u = -\nabla_x \cdot (u V[u]),\qquad V[u](t,x)\coloneqq\int_{\mathbb{R}^d} g(x,\tilde{x})~u(t,\tilde{x})~\textnormal{d} \tilde{x}.    
\end{equation} 
Studying approximation properties between~\eqref{cekk_eq:Vlasov1} and~\eqref{cekk_eq:kin} requires the empirical measure
\begin{equation*}
	\delta_{M}(t)\coloneqq\frac{1}{M}\sum_{j=1}^M \delta_{x_j(t)},
\end{equation*}   
where $\delta_{x_j(t)}$ is the usual Dirac measure at $x_j=x_j(t)$. Note that $\delta_M(t)$ is a probability measure tracking an averaged state of a finite number of interacting agents/particles. It is natural to compare it to a probability density $u$ and its associated measure $\mu$. The question is whether there exists a metric ${\bf d}$ on probability measures such that
\begin{equation}
	\label{cekk_eq:approx}
	\lim_{M\rightarrow \infty}\sup_{t\in[0,T]}{\bf d}(\delta_M,\mu)=0.
\end{equation}
Classical choices for ${\bf d}$ are the Wasserstein(-Monge-Kantorovich-Rubinstein)\index{Wasserstein Distance}~\cite{Golse,Villani1} metrics
\begin{equation}
	\mathcal{W}_{p}(\nu,\mu)\coloneqq \inf_{\pi\in \Pi(\mu,\nu)} \left(\int_{\mathbb{R}^d\times \mathbb{R}^d} \|x-y\|^p~\pi(\textnormal{d} x~\textnormal{d} y)\right)^{1/p},
\end{equation}  
where $\Pi(\mu,\nu)$ is the space of all couplings between $\mu$ and $\nu$, i.e., the $\mu$ and $\nu$ are the marginals on the first and second argument of the measures in $\Pi(\mu,\nu)$. One approach to justify~\eqref{cekk_eq:approx} is to first study existence and uniqueness for the Vlasov equation \eqref{cekk_eq:Vlasov} and/or \eqref{cekk_eq:Vlasov1}. Once this is proven, one studies continuous dependence with respect to the initial condition in ${\bf d}$. If this holds, e.g., 
\begin{equation}
	\label{cekk_eq:approx1}
	\lim_{M\rightarrow \infty}\sup_{t\in[0,T]}{\bf d}(\delta_M(t),\mu(t))=0\quad \text{if}\quad \lim_{M\rightarrow \infty}{\bf d}(\delta_M(0),\mu(0))=0,
\end{equation}
then evolving the empirical measure via the system of finite-dimensional ODEs~\eqref{cekk_eq:kin} yields a good approximation to the mean-field limit Vlasov equation and vice versa; one also says that the Vlasov equation is strictly derivable in $[0,T]$~\cite{BraunHepp,Golse,KuehnBook1,Neunzert}. 

A typical result in the 1-Wasserstein (or bounded Lipschitz) metric for all-to-all coupled systems with a Lipschitz coupling function $g$ regarding continuous dependence of the form~\eqref{cekk_eq:kin} are Dobrushin-type estimates\index{Dobrushin-Type Estimate}~\cite{Dobrushin,Dobrushin1} for two solutions $\mu_{1,2}(t)$ given by 
\begin{equation}
	\label{cekk_eq:Dobrushin}
	\mathcal{W}_{1}(\mu_1(t),\mu_2(t)) \leq \textnormal{e}^{2L|t|} \mathcal{W}_{1}(\mu_1(0),\mu_2(0))\,,
\end{equation}
where $L>0$ is the Lipschitz constant of $g$. From~\eqref{cekk_eq:Dobrushin}, one concludes that the Vlasov equation is strictly derivable on $[0,T]$. 

We also remark that the most common limit relevant in ML are deep neural networks of general heterogeneous networks that we shall study in the next section. Yet, there are classical cases, where all-to-all coupling already appears. For example, the training of a feed-forward neural network (see Section~\ref{cekk_sec:feedforward}) with a single layer with $M$ neurons can be interpreted as an IPS with all-to-all coupling \cite{Chizat18, Sirignano20, Gess25}. Here, every neuron is considered to be an individual particle, whose state is given by the weights of the ingoing and outgoing edges and its bias. The interactions arise from the gradient descent update rule \eqref{cekk_eq:GD}. Thus, the limit $M\to \infty$ describes the training of a one-layer neural network\index{One-Layer Neural Network} with infinite width, the opposite limit of the infinite-depth neural networks studied in Section~\ref{cekk_sec:neuralODEs}.

\subsubsection*{Singularly Coupled Systems, Graphops and Digraph Measures}

Building upon more abstract work in kinetic theory~\cite{Neunzert,Golse,Jabin}, one can actually specialize to particular models to obtain concrete and applicable results. As an illustration consider the Kuramoto\index{Kuramoto Model} case~\cite{Lancellotti}, where one obtains for the Vlasov equation
\begin{equation*}
	\label{cekk_eq:KurVlas}
	\partial_t u = - \partial_x(uV[u]),\; V[u](x,t,\omega)=\omega + K \int_0^{2\pi}\int_\mathbb{R} \sin(\tilde{x}-x) u(\tilde{x},t;\tilde{\omega})\zeta(\tilde{\omega})~\textnormal{d} \tilde{\omega} ~\textnormal{d} \tilde{x},
\end{equation*}
where $\zeta$ is a given fixed probability distribution of the intrinsic frequencies $\omega_i$ of the oscillators, i.e., the distribution specifying the intrinsic dynamics $f_i(x_i)=\omega_i$. For Kuramoto (and suitable variants), it has been shown recently that one can take into account graphs $\mathcal{G}^M$ modelled via a graphon $G$~\cite{Lovasz}, i.e., the adjacency matrix is derived from a kernel 
\begin{equation}
	\label{cekk_eq:graphon}
	a^M_{ij}=\int_{\mathcal{I}_i^M\times \mathcal{I}_j^M} G(x,y)~\textnormal{d} x~\textnormal{d} y,\qquad G:\mathcal{I}\times \mathcal{I}\rightarrow \mathbb{R},
\end{equation} 
where $\{\mathcal{I}_i^M\}_{i=1}^M$ is a subdivision of the unit interval $\mathcal{I}=[0,1]$. Under technical assumptions on $G$ and $\zeta$, one gets a modified Vlasov equation~\cite{AyiDuteil,ChibaMedvedev,KaliuzhnyiVerbovetskyiMedvedev1} for $u=u(x,t,\omega,z)$
\begin{equation*}
	\label{cekk_eq:KurVlas1}
	\begin{split}
		\partial_t u &= - \partial_x(uV[u]), \\
		V[u](x,t,\omega,z) &=\omega + K \int_\mathcal{I}\int_0^{2\pi}\int_\mathbb{R}  \sin(\tilde{x}-x) u(\tilde{x},t,\tilde{\omega},\tilde{z})\zeta(\tilde{\omega})G(z,\tilde{z})~\textnormal{d} \tilde{\omega}~\textnormal{d} \tilde{x}~\textnormal{d} \tilde{z},
	\end{split}
\end{equation*}
where $z\in \mathcal{I}$ tracks the heterogeneity of the graph/network. Instead of mean-field Vlasov equations, a similar approach can also be exploited for continuum limit PDEs including graphons\index{Graphon}~\cite{Medvedev3,Medvedev2,KaliuzhnyiVerbovetskyiMedvedev,KuehnThrom2}. Yet, notice that~\eqref{cekk_eq:graphon} makes the strong assumption that the graph $G$ has as its limit a well-defined graphon $G\in L^p(\mathcal{I}\times \mathcal{I},\mathcal{I})$ for $p\in[1,\infty)$, which is not the case for large classes of graph limits~\cite{BackhauszSzegedy}. Yet, even in more general cases, there are extensions possible to graph operators (graphops) as discussed in~\cite{KuehnGraphops,GkogkasKuehn,KuehnXu,GkogkasKuehnXu,GkogkasKuehnXu1}\index{Graphop}. Considering additional stochastic forcing terms for the ODEs giving SDEs leads to (Vlasov-)Fokker-Planck-Kolmorogov (FPK) equations~\cite{Risken,Frank,Pavliotis1,Sznitman}\index{Fokker-Planck-Kolmogorov (FPK) Equation}. 
In the course of the research project, which led to this book chapter, we have proven one particular variant of mean-field limits containing a generalization of graph limits via so-called digraph measures (DGM)~\cite{KuehnXu}.\index{Digraph Measure (DGM)} 

Here, we briefly sketch this result for illustration purposes from~\cite{KuehnPulido}. We consider a stochastic variant of the NDS \eqref{cekk_eq:key}\index{Dynamical System!Network Dynamical System} given by
\be
\label{cekk_eq:SDEIPS}
\txtd x_i = f(x_i) \txtd t + \frac{1}{M} \sum_{j=1}^{M} a^{M}_{ij} g(x_i, x_j) \txtd t + \frac{1}{M} \sum_{j=1}^{M} \hat{a}^{M}_{ij} h(x_i, x_j) \txtd W^i
\ee		
with initial conditions $x_i(0)=x_i^0$ for $i\in [M]$, $(a_{i,j}^M)_{i,j=1}^M$ is the usual adjacency matrix of the system representing the basic deterministic network interaction between vertices $i$ and $j$, $\hat{a}^{M}_{ij}$ is an adjacency matrix representing how the noise is influenced by the interactions between $i$ and $j$, and finally $W^i=W^i(t)$ for $i\in[M]$ are independent identically distributed Brownian motions. We shall always assume again that $f,g,h$ are Lipschitz. To capture the heterogeneity of the graphs, we again introduce an interval $\cI=[0,1]$, which is going to form a location of each type of vertex via a variable $u\in \cI$. Since we are working with DGMs, and in our problem we have a graph represented by a matrix, we need to approximate this graph by a DGM. Consider a partition of the interval $\cI=[0,1]$ as given by $I_i^M=]\frac{i-1}{M},\frac{i}{M}]$, for $1<i\leq M$, and $I_1^M=[0,\frac{1}{M}]$. Then we introduce digraph measures associated to $a^M_{i,j}$ and $\hat{a}^M_{i,j}$ given by
\be
\begin{split}
	\eta^u_{A^M}(v)\coloneqq\sum_{i=1}^M\textbf{1}_{I_i^M}(u)\sum_{j=1}^M\frac{a^M_{i,j}}{M}\delta_{\frac{j}{M}}(v),\\
	\eta^u_{\hat{A}^M}(v)\coloneqq\sum_{i=1}^M\textbf{1}_{I_i^M}(u)\sum_{j=1}^M\frac{\hat{a}^M_{i,j}}{M}\delta_{\frac{j}{M}}(v),
\end{split}
\label{cekk_eq:dgm}
\ee
where $\textbf{1}_{\frac{j}{M}}(x)$ denotes the indicator function and ${\frac{j}{M}}$ serves as the representative of the set $I_j^M\coloneqq(\frac{i-1}{M},\frac{i}{M}]$ subdividing $\cI$. We shall assume that both graph sequences for the interactions have limits in the following sense: graphs $\{A^M\}_{M\geq1}$ converge to a limiting measure $\eta$ if ${\bf d}_\infty(\eta_{A^M},\eta)\rightarrow 0$ as $M\rightarrow\infty$, where ${\bf d}_\infty$ is the bounded Lipschitz distance on measures. Similarly, $\hat{A}^M$ shall converge to a digraph measure $\hat{\eta}$. Instead of the ODE characteristic equation from the deterministic IPS case, one considers the following independent processes:
\be
\begin{split}
	X_u(t) =& X_u(0) + \int_{0}^{t} f(X_u(s)) ~\txtd  s + \int_{0}^t \int_\cI \int_{\cX} g(X_u(s), y) ~\mu_{v, s}(\txtd y)\eta^u(\txtd v) \txtd s \\
	&+ \int_{0}^t \int_\cI \int_{\mathbb{X}} h(X_u(s), y)~\mu_{v, s}(\txtd y)\hat{\eta}^u(\txtd v) \txtd W_s^u,
\end{split}
\label{cekk_eq:indep}
\end{equation}
where we define $\mu_{u,t} = \mathcal{L}(X_u(t))$ as the law of $X_u(t)$ for $u\in \cI$. The initial conditions $X_u(0)=X_u^0$ are independently and identically distributed under a probability measure $\bar{\mu}^0\in\mathcal{P}(\cX)$. The two measures $\eta,\hat{\eta}$ have a disintegration into a family of measures $\left\{\eta^u\right\}$ with $u \in \cI$, $\left\{\hat{\eta}^u\right\}$ with $u \in \cI$. These are called fiber measures \cite{BackhauszSzegedy}\index{Fibre Measure} in direct analogy to the theory of graphops~\cite{KuehnGraphops}\index{Graphop}. Effectively, these measures represent the adjacency matrices $a^M_{i,j}$ and $\hat{a}^M_{i,j}$, and each fiber measure describes a local connectivity of a given vertex with label $u\in\cI$. The lowest-order mean-field approximation is obtained if we average over the heterogeneity of the graph, i.e., we take the average of the heterogeneous empirical measures
\benn
\bar{\mu} \coloneqq \int_\cI \mu_{u,t} ~ \txtd u.
\eenn 
The main theorem of~\cite{KuehnPulido} states that we then again converge (in a rather technical metric) to a nonlocal Vlasov-Fokker-Planck\index{} equation 
\be
\begin{split}
&\partial_t \bar{\mu}_{t} + \partial_x \left(\bar{\mu}_{t} f(x) + \int_I{\mu}_{u,t} \int_I \int_{\mathbb{X}} g(x,y) ~\mu_{v,t}(\text{d}y) \eta^u(\text{d}v)\text{d}u\right) \\
&+ \frac{1}{2}\partial_x^2\left(\int_I {\mu}_{u,t} \left[\int_I \int_{\mathbb{X}} h(x,y) ~\mu_{v,t}(\text{d}y) \hat{\eta}^u(\text{d}v)\right]^2~\text{d}u\right)=0.
\end{split}
\ee
Of course, one can push these types of results even further beyond graph heterogeneity and leading-order mean-field limits. For example, one could relatively directly combine this theory with stochastic corrections to the mean-field mesoscopic PDE level. Then one is going to obtain stochastic PDEs. Furthermore, if one scales the interaction radius between particles to moderate instead of long-range interactions, one can obtain reaction-diffusion or fluid-type PDEs on the macroscopic level. These classes also arise in the limit $M\ra \I$ if one considers macroscopic observables and derives their differential equations. 

In summary, one should rather think of the classical machine learning network models as very special cases of the field of NDS, dynamical systems, and in the limit of large networks as particular cases of important partial-integro-differential equations. This makes all classical tools of these fields immediately applicable to ML/AI. We also refer to the chapter in this book for the SPP2298 project ``Assessment of Deep Learning through Meanfield Theory'' by PI Herty and collaborators, where the same general strategy of utilizing mean-field limits of IPS in ML problems is discussed. 

%%%%%%%%%%%%%%%%%%%%%%%%%%%%%%%%%%%%%%%%%%%%%%%%%%%%%%%%%%%%%%%%%%%%%%%%%%%%%%%%%%%%%%%%%%
\subsection{Recurrent Neural Networks, Transformers, and Beyond}
\label{cekk_sec:RNNs}
Now one could argue that there is an ongoing recent stream of network architecture and algorithm refinements in ML/AI, and that this could require to rethink the NDS model class \eqref{cekk_eq:key} discussed above.
A closer mathematical look indicates that a very mild generalization already suffices. 

For example, consider recurrent neural networks (RNNs)\index{Recurrent Neural Network (RNN)} and their various incarnations. One typical formulation of the dynamical update rule for an RNN is given by
\be
\label{cekk_eq:delayRNN}
x_{k+1}= Ax_k + B \tilde{G}(x_{k-1})+CX + b_k  
\ee
where $x_k\in\R^n$, $A,B,C$ are matrices, $b_k$ are biases, $\tilde{G}:\R^n\rightarrow\R^n$ is the given nonlinearity (or activation function), and $X$ represents the input data. Equation~\eqref{cekk_eq:delayRNN} arises as the time discretization of a delay differential equation with a suitable fixed delay. Clearly~\eqref{cekk_eq:delayRNN} defines a dynamical system with memory, which is actually an old idea~\cite{Roberts3,Anderson2} predating significantly the more modern use of time delay neural networks~\cite{Waibel}; see also Section~\ref{cekk_sec:delay} for the importance of delays for embedding/approximation properties. 

One of the main variants associated to~\eqref{cekk_eq:delayRNN} are long short-term memory (LSTM) architectures~\cite{HochreiterSchmidhuber}\index{Long Short-Term Memory (LSTM) Architecture}. Although it is technically tedious, also LSTMs can be associated with dynamical systems with delay~\cite{Sherstinsky}. Effectively, any recurrent network can also be viewed from a network dynamics perspective as an NDS with longer-range connections. In fact, we have already seen Hopfield and Boltzmann-machine examples above, which are all-to-all connected in generic cases. Other examples with a direct link are echo-state networks (ESN)\index{Echo-State Network} and reservoir computers\index{Reservoir Computing}~\cite{Jaeger,OzturkXuPrincipe,Appeltantetal}. 

Instead of going into these classical architecture ideas in more detail, one could try to object that the conversion to a dynamics viewpoint is not applicable for ``more recent'' ML/AI architectures and algorithms. Usually, this objection only arises due to a real-life practical time delay in making a system mathematically more precise. To illustrate this, let us consider ``transformers''\index{Transformer} (or attention networks), which are often attributed to~\cite{Vaswanietal}. For illustration purposes, we shall use continuous time (``deep transformers''), but very similar remarks apply to discrete time. Let $x_i=x_i(t)\in\R^d$ and define the transformer via the flow of the ODEs
\be
\label{cekk_eq:transformer}
x_i'=\sum_{j=1}^M \frac{\exp(\cM_1(t)x_i\cdot \cM_2(t)x_j)}{\sum_{l=1}^M \exp(\cM_1(t)x_i\cdot \cM_2(t)x_j)}\cM_3(t)x_j,\quad i \in[M],
\ee  
which uses the ``soft-max'' or ``self-attention'' mechanism for $\cM_{1}(t),\cM_{2}(t)\in\R^{M_0\times M}$ (for $M_0\leq M$) and $\cM_{3}(t)\in\R^{M\times M}$ being given matrices. Of course, many variants of the nonlinearity are possible. Clearly, we  recognize~\eqref{cekk_eq:transformer} as another variant of the standard NDS~\eqref{cekk_eq:key}. In fact, all transformer-type models are again just forms of IPS. We are also now used to the fact that the coefficients could be time-dependent, as this links to the structural network properties discussed in Section~\ref{cekk_sec:neural} and to training as discussed in Section~\ref{cekk_sec:opti}. 
Regarding training, we also mention an area, which we do not cover here due to space constraints: namely, one can also use the dynamics of the IPS viewpoint discussed above for node limits in the context of training. Indeed, if one views edge weights and biases as the independent dynamical variables during training, one can again take mean-field limits of IPS. This aspect is covered in many recent works, see e.g.~\cite{Fornasieretal,RotskoffVandenEijnden,Sirignano20}. The main challenges for dynamics \emph{of} networks remain similar to the dynamics \emph{on} networks covered in Section~\ref{cekk_sec:mean_field}, i.e., to justify a mean-field or continuum limit as the number of edges/parameters tends to infinity.

%%%%%%%%%%%%%%%%%%%%%%%%%%%%%%%%%%%%%%%%%%%%%%%%%%%%%%%%%%%%%%%%%%%%%%%%%%%%%%%%%%%%%%%%%%
\subsection{Generative Models}
\label{cekk_sec:gen_models}

So far, we have typically assumed that our network trains on a certain given data set $(x_i, y_i)_{i \in \mathbb{N}}$ and then estimates, in some sense, the conditional probability $\mathbb{P}(Y | X =x)$, seeing our input/observable $X$ and target/output $Y$ as random variables. This is often called \emph{discriminative} modeling.
In contrast, in \emph{generative} models the input/observable data are generated according to an estimation of $\mathbb{P}(X | Y=y)$ in combination with $\mathbb{P}(Y)$, yielding an estimator of the joint probability distribution of observable and target, in particular $\mathbb{P}(Y | X =x)$ via Bayes' rule; see \cite{NgJordan} for a distinction of these two classes. Generative models are highly relevant for selecting features, facilitating classification and increasing model accuracy, or generating new realistic data samples to proceed with the learning process.

In the context of (deep) learning, there are several important variants of generative models (see, e.g., \cite{Harshvardhanetal} for a review), including Gaussian mixture models, hidden Markov models, latent Dirichlet allocation, Boltzmann machines, variational autoencoders, generative adversarial networks or generative diffusion models. Boltzmann machines have been discussed in Section~\ref{cekk_sec:Boltzmann}. From a dynamical perspective, the other most interesting classes are hidden Markov models (HMMs), generative adversarial networks (GANs) and generative diffusion models (GDMs).

HMMs\index{Hidden Markov Models} \cite{CappeMoulinesRyden} consider a time series of observations as a Markov chain $\{X_k\}_{k \geq 0}$ with transition kernel $Q$, i.e.,
$$ \mathbb{P}(X_{k+1} \in A | \mathcal F_k ) = Q(X_k, A)$$
for measurable sets $A$ and a corresponding filtration $\mathcal F_k$. The full HMM now adds a transition kernel $G$ from the observation variable $X$ to the target variable $Y$ to obtain a kernel for the joint Markov chain $\{X_k, Y_k\}_{k \geq 0}$
$$ T[(x,y), C] = \iint\limits_C Q(x, \text{d} x') G(x', \text{d} y'), $$
where $C$ is a measurable set from the product space.
Now, this joint Markov chain can be used to generate the aspired joint distribution of the input observables $X$ and the output target $Y$. 
The dynamical point of view becomes relevant as one would hope for an invariant, ergodic probability distribution $\pi(x,y)$ that can be approximated by sampling from the Markov chain with an appropriate kernel $T$ (which should be estimated based on some first observed time series inducing a guess of $Q$ and $G$). Now, several Monte Carlo algorithms, like accept-reject or Metropolis-Hastings schemes, can be applied to achieve this task; the theoretical underpinning of these approaches is well-studied, in terms of classical detailed balance conditions/reversible Markov chains \cite{RobertCasella} but also more recent methods including irreversibility \cite{Ottobre}.

Generative adversarial networks (GANs)\index{Generative Adversarial Networks} have been proposed in \cite{Goodfellowetal} and, since then, have seen tremendous impact in theory and applications. In this approach, one trains two models: a generative model capturing the data distribution and a discriminative model estimating
the probability that a certain data sample belongs to the set of actual training data rather than the generative model. This framework understands the training procedure as a two-player game where the generative model tries to win against the real training data. This approach does not deploy an underlying Markov chain but can rather be associated with iterated games, immediately linked to dynamical systems theory. 

A starting point for investigating the corresponding regret-minimizing learning dynamics in zero-sum games has been the model of replicator dynamics, arguably the most well-studied structure in evolutionary game theory~\cite{Weibull}. The replicator structure is the continuous-time analog of the Multiplicative Weights Update (MWU), a meta-algorithm which is well-known for its regret guarantees~\cite{Aroraetal}. 
In the specific case of two player zero-sum games, the replicator equations are given by the ODE
\begin{equation}  \label{eq:replicator2}
\begin{array}{rl}
	\dot x_i &= x_i \left(\{Ay\}_i - x^{\top} Ay\right)\,,\\
	\dot y_j &= y_j \left(\{Bx\}_j - y^{\top} Bx\right)\,,
\end{array}
\end{equation}
where $(x, y) \in \Delta_n \times \Delta_m$, denoting the $n$- and $m$-dimensional simplices respectively, and $A$ and $B$ are pay-off matrices. A (mixed) Nash equilibrium, the key object in game dynamics, is precisely an (interior) equilibrium of the ODE~\eqref{eq:replicator2}, which is typically elliptic, i.e., only reached in average (see, e.g., \cite{Hofbauer}). 
The impact of stochastic noise on such models, as clearly relevant also for the algorithmic situation of GANs, has been studied in \cite{EngelPiliouras}, which was also a publication in the course of our SPP2298 research project. The result shows that the choice of noise is crucial for observing an interior Nash equilibrium or not, i.e., for the algorithm to converge, at least in average, to a favorable solution.

Similarly (also to the previous subsections), the discrete-time problem of generative models aiming to learn a probability distribution can be related to continuous-time models that are easier to study, in this case, SDEs of the form
\begin{equation}
\label{cekk_eq:SDE}
\text{d} X_t = b(X_t, t) \text{d} t + \epsilon \sigma(X_t, t) \text{d} W_t,
\end{equation} 
where $b$ is a drift vector field and $W_t$ is a vector of independent Brownian motions with a scaling parameter $\epsilon$ controlling the noise level. Such models may be understood as generative diffusion models (GDMs)\index{Generative Diffusion Models} \cite{Caoetal}. The existence of ergodic, invariant probability distributions, giving the typical behavior of the generated time series, can now be studied around finding (potentially unique) solutions of the stationary Fokker-Planck equation
$$ \mathcal L^* p = 0,$$
where $\mathcal{L}^*$ is the forward Kolmogorov operator associated to the SDE~\eqref{cekk_eq:SDE}.
Here, one can refer to an extremely well-studied problem in stochastic dynamics in order to understand and control the outcomes of generative procedures. 
A recent study \cite{HessMorris} relates such generative diffusion models to associative memory models via perturbations of Morse-Smale flows, taking $\epsilon$ in \eqref{cekk_eq:SDE} small.
In this context, the extensively studied problem of metastability \cite{BovierdenHollander} enters the realm of generative modeling, allowing for many new insights via another well-built theory from (stochastic) dynamical systems.

A specifically interesting variant of GDMs are score-based generative models (SGMs) which transfer the distribution $p_0(x)$ of data $X(0)$ along the dynamics of an SDE~\eqref{cekk_eq:SDE} to a distribution $p_T(x)$ that can be sampled from efficiently. The corresponding reverse SDE, which includes a score term $\nabla_x \log p_t(x)$ and induces the solution process $Y_t = X_{T-t}$, can then be used to generate new samples (see, e.g., \cite{Songetal}).
Recently, the robustness of such models has been studied by understanding the Markov solution process of the (reverse) SDE as a random dynamical system (RDS), i.e., via the same formalism that we have introduced in Section \ref{cekk_subs:RDS} to study the stability of stochastic gradient descent. In \cite{ChandramoorthyDeClercq}, the authors show that, if the leading Oseledets spaces associated with the largest Lyapunov exponents of the RDS solutions align with the boundary of the manifold of data, then the generating scheme of the model is robust under small errors. Here, robustness is understood in terms of the support of the generated distribution: for sufficiently small perturbations, the predicted target is only supported where the actual target density is supported. The dynamical conditions for the proposed theory crucially include attraction to the support by the vector field $b$ in the SDE~\eqref{cekk_eq:SDE} and general regularity assumptions to apply mutiplicative ergodic theory (cf.~\cite{Arnold98}). Hence, this is another example of how ergodic theory and dynamical systems are highly relevant to understand the stability of learning procedures.

%%%%%%%%%%%%%%%%%%%%%%%%%%%%%%%%%%%%%%%%%%%%%%%%%%%%%%%%%%%%%%%%%%%%%%%%%%%%%%%%%%%%%%%%%%
\subsection{Backpropagation, Optimization and Gradients}
\label{cekk_sec:backpropagation}

We have seen that information propagation and learning concepts from machine learning can essentially always be interpreted as special cases of dynamical phenomena. Yet, one may wonder whether more static-looking concepts in the area can also be translated. To illustrate this, let us just take one example and look at the very classical idea of backpropagation\index{Backpropagation}. Probably the algorithmically cleanest description of backpropagation has been given within the field of automatic differentiation~\cite{Griewank1}. Suppose we have a simple composition of differentiable scalar functions 
\benn
F(x_0)=f_1(f_0(x_0)),\qquad x_0\in \R,~x_1\coloneqq f_0(x_0)\in\R,~x_2:=f_1(x_1)\in\R.
\eenn
Computing the derivative is done by the chain rule, so 
\benn
\frac{\partial F}{\partial x_0}=\frac{\partial f_1(x_1)}{\partial x_1} \frac{\partial f_0(x_0)}{\partial x_0}=\frac{\partial x_2}{\partial x_1} \frac{\partial x_1}{\partial x_0}.
\eenn
There is a choice how to evaluate the expression. We could evaluate the chain rule ``inside-out'' starting with the innermost part $\frac{\partial x_1}{\partial x_0}$ and then compute the next outer part $\frac{\partial x_2}{\partial x_1}$. This is called forward accumulation in automatic differentiation. Alternatively, one could proceed via reverse accumulation and first evaluate $\frac{\partial x_2}{\partial x_1}$ and then $\frac{\partial x_1}{\partial x_0}$, evaluating the chain rule ``outside-in''. It is well-known that reverse accumulation is the key step in backpropagation. The reason is that for multi-dimensional functions $F:\R^{N_1}\ra \R^{N_2}$ with $N_1\gg N_2$, reverse accumulation is much more efficient than forward accumulation. The case $N_1\gg N_2$ is precisely the one encountered in ML during training. At first sight, there seems to be no dynamical principle behind backpropagation, but this is not the case. It helps to think more generally and consider an arbitrary composition of functions
\benn
x_{k+1}=f_k( f_{k-1}(f_{k-2}( \cdots f_0(x_0) ))=:f(x_0,k).
\eenn   
Quite clearly, the last equation just describes the time-$k$ map of a non-autonomous discrete-time dynamical system\index{Dynamical System!Discrete Time} $x\mapsto f(x,k)$; see~\cite{Poetzsche2} for a broader background on discrete-time non-autonomous dynamics. The difference to a standard autonomous iterated map is that we must allow that the iterated map also depends upon the time step $k$. Of course, if it is independent of $k$ then we just have a more classical autonomous iterated map. Let us think of the initial condition $x_0$ more generally as one possible instance of parameters $p$ such that we get
\benn
x\mapsto f(x,k;p).
\eenn 
Crucially, we could then ask, what does it dynamically mean to compute the derivative of the iterated map with respect to the parameters $p$, i.e., $\txtD_pf$ along the iterates of the map $x_j=f_{j-1}(x_{j-1};p)$. This is well-known in dynamical systems as the variational equation with respect to $p$. It amounts to a local linearization along a solution trajectory. To evaluate it, one should compute
\benn
\txtD_p f_k( f_{k-1}(f_{k-2}( \cdots f_0(x_0;p) );p),
\eenn  
which is essentially the same as the standard backpropagation problem in machine learning if one identifies the parameters $p$ with the unknown weights and biases. The forward accumulation terminology now becomes even more intuitive as evaluating the chain-rule inside-out precisely corresponds to evaluating the variational equation forward along the trajectory, while reverse accumulation corresponds to evaluating the variational equation backward (or ``reverse'') along the trajectory. In particular, the key insight of backpropagation really is that it is equivalent and sometimes beneficial to look in reverse time for an object that could be considered in both time directions. In dynamics, this is an extremely old idea and has been used many times. Just as an illustration, the importance of the equivalence of propagating small perturbations forward and backward already features in the work of Helmholtz on the principle of least action in 1887~\cite{Helmholtz} (it is likely that similar ideas appear in various other contexts even far before that time as it is a generic dynamical principle to compare forward/backward motions and to utilize local linearizations along solutions)\footnote{It is actually interesting to look at the other papers appearing in same issue as~\cite{Helmholtz} including works in Englisch, French, German, and Latin, as well as authors such as Boltzmann, Cayley, Frobenius, Fuchs, Hamburger, Hermite, Kronecker, Kummer, Lipschitz, Minkowski, Picard, Runge, Segre, Sylvester, Thom\'e, and Weingarten, who have many mathematical concepts named in their honor.}.

More generally, effectively all techniques in training machine learning algorithms rely on some form of optimization. In Section~\ref{cekk_sec:opti}, we have already seen how one can use stochastic dynamical systems to understand the convergence of stochastic gradient descent. Similar principles apply to almost all optimization algorithms as they usually leverage iterative dynamics, i.e., we define our algorithm as a time-dependent dynamical system that depends upon the function we want to optimize. This translation principle applies to all classical optimization methods, such as gradient descent, stochastic gradient descent, nonlinear programming techniques, Newton-type methods, etc., which overwhelmingly employ iteration techniques. Furthermore, many meta-heuristics just use more complex dynamical systems, such as simulated annealing, Metropolis-Hastings, consensus-based optimization, among many others. One may object that it may seem unnecessary to bring in a dynamical systems perspective since most algorithms ``work well in practice''. This argument does apply to a certain extent but this viewpoint then excludes our understanding, why and when optimization fails. Even for standard Newton-type methods, it is well-known that the dynamics of finding good starting values for the convergence of Newton's method can be extremely complex, even for extremely simple problems. In the context of machine learning, another good issue to illustrate this point are exploding or vanishing gradients. Suppose we have a loss function $\cL:\R^D\ra \R$ and consider the associated discrete-time or continuous-time gradient dynamics\index{Gradient Descent (GD)}\index{Gradient Flow}
\benn
p_{k+1} = p_k-\alpha\nabla (\cL_k)\qquad \text{or}\qquad p'=-\nabla \cL(p).
\eenn      
For didactic simplicity, let us just use continuous time and use the gradient flow ODE (the same line or argument holds in the discrete-time case). Let us even assume that there exists a single global minimum and that the optimization problem is convex, quadratic, and $D=2$. This yields a linear ODE system
\be
\label{cekk_eq:ODE2Dtrivial}
\frac{\txtd p}{\txtd t}=p'=Ap,\qquad p(0)=p_0,
\ee
where $p_0$ is our initial guess. In fact, we can even pick $A$ as a diagonal matrix $A=\textnormal{diag}(-1,-\varepsilon)$, where $0<\varepsilon\ll 1$ is a small parameter. It is trivial to see that $p(t)\ra 0$ as $t\ra 0$, i.e., we do converge to the global minimum $p=0$ of the loss function $\cL(p)=\frac12p_1^2+\frac12 \varepsilon p_2^2$, which is the equilibrium point of the ODE~\eqref{cekk_eq:ODE2Dtrivial}. Yet, since $\varepsilon$ is small, we only see fast convergence of the first component $p_1$. For $p_1$ small, we then get that $Ap$ is small as well, so convergence is extremely slow due to an almost vanishing gradient. More precisely, the ODE~\eqref{cekk_eq:ODE2Dtrivial} has a multiple time scale\index{Multi-Scale System} structure that causes almost vanishing gradients; note that in practical terms, due to numerical error and/or stochastic optimization schemes, we never have exactly vanishing gradients, but one struggles with the problem of gradients very close to zero. The same multiscale effect can just be much more pronounced in many, much more complicated problems. Indeed, a choice of architecture, activation function, layer depths for the feed-forward case, and many other factors can implicitly generate a multiscale structure. Even if the optimization problem is well-posed in principle, as in our trivial example, vanishing gradients might make it very difficult to compute in practice. Of course, a simple inversion of the parameter $\varepsilon$ to $1/\varepsilon$ gives extremely large gradients. In our example, this could even be achieved by just rescaling time and defining $s\coloneqq t\varepsilon$. Many classical problems in science and engineering naturally lead to multiscale structures, including e.g.~chemical reactions, biological neuron dynamics, laser systems, and evolutionary processes, just to name a few~\cite{Kuehn2015}. Therefore, it is not really surprising that many ML challenges, which can be rephrased as dynamical systems, also encounter this issue. In summary, it is quite natural to expect that vanishing/exploding gradients occur frequently in ML, which can be analyzed further using dynamics techniques, e.g., in the context of chaotic systems~\cite{Hessetal}.  

%%%%%%%%%%%%%%%%%%%%%%%%%%%%%%%%%%%%%%
\section{Conclusion}\label{cekk_sec:concl}

Based upon our previous discussion, one may identify a general strategy for other (known or even unknown) topics arising in AI/ML. Any aspect purely linked to ``learning'' must be intrinsically connected to a dynamical viewpoint. Indeed, the process of training will often be a sequential or time-dependent one, refining the parametrization of a machine learning model. For example, we are constantly gathering new data, so we are typically going to have to train new or retrain existing models, which are both dynamical processes. Regarding the structural ``processing of the information'', e.g., via deep neural networks, there is always a sequential conversion step mapping inputs to outputs, which can be viewed as the result of a (finite-time) dynamical process. Combining both processes leads to non-autonomous dynamics, adaptive/co-evolutionary network dynamics, optimal control challenges, or variations thereof that can yet again be viewed from a dynamical systems standpoint. To understand the fundamental difficulty level of an AI/ML problem rigorously and to figure out which technical tools are available to solve it, we suggest that it can pay enormous dividends to extract the dynamical core issues first. For example, it is well-known why certain particle systems are much more difficult to study than others, why bifurcations in non-autonomous systems are structurally very different from autonomous ones, or why new effects can be induced by stochastic/random dynamical systems. 

Of course, there are challenges stemming from even more complex nonlinear systems in the context of AI that cannot be mathematically proven just by pencil-and-paper dynamics techniques. This is a well-known issue already for many quite low-dimensional classical dynamical systems~\cite{Tucker}. Yet, even in these cases there are rigorous methods available that can computationally validate an observed dynamical phenomenon. For example, within our SPP2298 project, that has led to various collaborations with other researchers focusing on computer-assisted proofs~\cite{KuehnQueirolo}, i.e., using a computer-assisted proof to prove that a neural network generates certain dynamical patterns. Although this direction will not be pursued further in this overview, it illustrates nicely that emerging directions in AI/ML are often coupled to techniques that arose due to challenges in dynamical systems. 

\bibliographystyle{plain}               
\bibliography{bibfileCEKK}               

\end{document}